\theoremstyle{plain}
\newtheorem{thm}{\protect\theoremname}[section]
\theoremstyle{plain}
\newtheorem{lem}[thm]{\protect\lemmaname}
\theoremstyle{plain}
\newtheorem{prop}[thm]{\protect\propositionname}
\theoremstyle{remark}
\newtheorem{rem}[thm]{\protect\remarkname}
\theoremstyle{definition}
\newtheorem{defn}[thm]{\protect\definitionname}
\theoremstyle{definition}
\newtheorem{example}[thm]{\protect\examplename}
\theoremstyle{plain}
\newtheorem{cor}[thm]{\protect\corollaryname}
\numberwithin{equation}{section}
\renewcommand{\tilde}{\widetilde}
\renewcommand{\hat}{\widehat}
\providecommand{\corollaryname}{Corollary}
\providecommand{\definitionname}{Definition}
\providecommand{\examplename}{Example}
\providecommand{\lemmaname}{Lemma}
\providecommand{\propositionname}{Proposition}
\providecommand{\remarkname}{Remark}
\providecommand{\theoremname}{Theorem}
\begin{document}
\global\long\def\phi{\varphi}%
\global\long\def\epsilon{\varepsilon}%
\global\long\def\theta{\vartheta}%
\global\long\def\E{\mathbb{E}}%
\global\long\def\P{\mathbb{P}}%
\global\long\def\N{\mathbb{N}}%
\global\long\def\Z{\mathbb{Z}}%
\global\long\def\R{\mathbb{R}}%
\global\long\def\F{\mathcal{F}}%
\global\long\def\le{\leqslant}%
\global\long\def\ge{\geqslant}%
\global\long\def\MT{\ensuremath{\clubsuit}}%
\global\long\def\ind{\mathbbm1}%
\global\long\def\d{\,\mathrm{d}}%
\global\long\def\dd{\mathrm{d}}%
 
\global\long\def\subset{\subseteq}%
\global\long\def\supset{\supseteq}%
\global\long\def\argmin{\arg\,\min}%
\global\long\def\bull{{\scriptstyle \bullet}}%
\global\long\def\supp{\operatorname{supp}}%
\global\long\def\sgn{\operatorname{sign}}%
\global\long\def\norm{\interleave}%

\title{Paracontrolled distribution approach\\
to stochastic Volterra equations}
\author{David J. Prömel\thanks{Universität Mannheim; email: proemel@uni-mannheim.de}
~and Mathias Trabs\thanks{Universität Hamburg; email: mathias.trabs@uni-hamburg.de} }
\maketitle
\begin{abstract}
Based on the notion of paracontrolled distributions, we provide existence
and uniqueness results for rough Volterra equations of convolution
type with potentially singular kernels and driven by the newly introduced
class of convolutional rough paths. The existence of such rough paths
above a wide class of stochastic processes including the fractional
Brownian motion is shown. As applications we consider various types
of rough and stochastic (partial) differential equations such as rough
differential equations with delay, stochastic Volterra equations driven
by Gaussian processes and moving average equations driven by Lévy
processes. 
\end{abstract}
\noindent \textbf{Key words:} fractional Brownian motion, Gaussian
process, Itô-Lyons map, paradifferential calculus, rough differential
equation, stochastic Volterra equation. 

\noindent \textbf{MSC 2010 Classification:} Primary: 45D05, 60H20;
Secondary: 45E10, 46N20, 60H10.

\section{Introduction }

Stochastic Volterra equations serve as mathematical models for numerous
random phenomena appearing in various areas such as biology, physics
and mathematical finance. In the present work we consider Volterra
equations of convolution type, which in their simplest form are given
by
\begin{equation}
u(t)=u_{0}+\int_{-\infty}^{t}\phi(t-s)\sigma(u(s))\d\theta(s),\quad t\in\R,\label{eq:intro equation}
\end{equation}
where $\theta\colon\R\to\R^{m}$ is a (random) input signal, e.g.,
an $m$-dimensional (fractional) Brownian motion, $u_{0}\in\R^{n}$,
$\phi\colon\R\to\mathbb{R}$ is the so-called kernel and $\sigma\colon\mathbb{R}^{n}\to\mathcal{L}(\mathbb{R}^{m},\mathbb{R}^{n})$
is a vector field. Since the pioneering works of \citet{Berger1980a,Berger1980b},
stochastic Volterra equations have been studied in different settings
and generality by a vast number of authors, see e.g. \citep{Protter1985,PardouxProtter1990,Cochran1995,Zhang2010}. 

This wide class of equations covers many stochastic differential and
integral equations as special cases such as ordinary stochastic differential
equations, classical stochastic Volterra integral equations, stochastic
equations involving fractional derivatives (noting that singular kernels
correspond to Fourier multipliers) and moving average equations driven
by Lévy processes. Recently, Volterra equations attracted additional
attention from the mathematical finance community because stochastic
Volterra equations with singular kernels $\phi$ constitute very suitable
models for the unpredictable and rough behaviour of volatility in
financial markets, cf. \citep{AbiJaber2017,Thibault2016,ElEuch2016}.

Rough path theory initiated by \citet{Lyons1998} provides an innovative
approach to the theory of stochastic differential equations leading
to many novel insights. One of the fundamental results of rough path
theory is the continuity of the solution map $\theta\mapsto u$, known
as the Itô-Lyons map, for controlled differential equations driven
by rough paths. This continuity statement had significant impact over
the past decades and found many applications, see \citep{Lyons2007,Friz2014}.

The main goal of this article is to develop a pathwise approach to
and a solution theory for Volterra equations driven by rough paths,
which allow for regular as well as singular kernels. In particular,
we prove the local Lipschitz continuity of the Itô-Lyons map for Volterra
equations generalising (in some directions) the above mentioned fundamental
result. Many implications of the rough path theory seem thus to be
feasible for Volterra equations.

For this purpose we first establish the existence of a unique solution
to the Volterra equation~(\ref{eq:intro equation}) driven by singals~$\theta$
with sufficient regularity, based on Littlewood-Paley theory and Bony's
paraproduct. In order to extend the existence and uniqueness results
to a rough path setting, we rely on the notion of paracontrolled distributions,
which was introduced by \citet{Gubinelli2015}. The paracontrolled
distribution approach is particularly suitable for the pathwise analysis
of Volterra equations of convolution type because of the following
two key observations: Firstly, the convolution operator appearing
in~(\ref{eq:intro equation}) fits nicely together with the underlying
Fourier and Littlewood-Paley analysis since the convolution operator
is, for instance, a local operation in the Fourier domain. The second
advantage of paracontrolled distributions is that the driving rough
path~$\theta$ (or the underlying model using the language of regularity
structures \citep{Hairer2014}) can be chosen adapted to the specific
equation which turns out to be essential for the solution theory involving
singular kernels.

Volterra equations driven by rough paths have so far only been studied
by \citet{Deya2009,Deya2011}. They have demonstrated that classical
rough path theory can be utilised to handle Volterra equations driven
by rough paths\textcolor{blue}{.} The approach in \citep{Deya2009,Deya2011}
requires a deep and heavy analysis leading to strong regularity assumptions
on the kernel~$\phi$, namely $\phi\in C^{3}$, and thus excluding
singular kernels. This is mainly caused by relying on the classical
space of (geometric) rough paths, which have been designed to treat
ordinary rough differential equations. Adapting the classical notion
of rough paths, \citet{Gubinelli2010} dealt with the mild formulation
of rough evolution equations associated to analytic semigroups, which
corresponds to infinite dimensional Volterra equations with kernels
given by the semigroups. More recently, \citet{Bayer2017} showed
the existence of a solution to a specific rough Volterra equation
modelling the `rough' volatility appearing on financial markets,
using Hairer's theory of regularity structures~\citep{Hairer2014}. 

Using the flexibility of the paracontrolled distribution approach,
we introduce the notion of convolutional rough paths by including
the convolution kernel~$\phi$ in the definition of the so-called
resonant term. The later notion can be seen as the analogue to geometric
rough paths in the paracontrolled distribution stetting. We prove
that the Itô-Lyons map has a locally Lipschitz continuous extension
from the space of smooth paths to the space of convolutional rough
paths. Hence, the Volterra equation~(\ref{eq:intro equation}) driven
by a level-$2$ convolutional rough path possesses a unique solution.
This ansatz leads to rather weak regularity assumptions on the kernel~$\phi$
requiring less than Lipschitz continuity and thus allowing especially
for singular kernels. 

In addition to the above mentioned modelling, there is a particular
interest in singular kernels, e.g. \citep{Cochran1995,Coutin2001,Wang2008},
because of their links to stochastic differential equations with fractional
derivatives \citep{Lototsky2018} and to a large class of semilinear
stochastic partial differential equations \citep{Zhang2010}. The
here developed paracontrolled distribution approach to Volterra equations
can thus also be viewed as a step towards these applications. However,
exploiting these directions more comprehensively would require extensions
based on higher order paracontrolled calculus, see \citep{Bailleul2016},
or to infinite dimensional spaces, cf. \citep{Martin2018}, which
is beyond the scope of the present article.

While it is necessary for singular kernels to be included in the definition
of the rough path, in the case of regular kernels, say $\phi$ is
at least Lipschitz continuous, the existence of the convolutional
rough path can be reduced to the existence of a generic rough path,
i.e., independent of the specific kernel. Moreover, considering the
regularity of the driving signal in Besov spaces, our analysis builds
on \citep{Promel2015} and interestingly the continuity results hold
for some Volterra equations driven by convolutional rough paths with
jumps, contributing to the recent extension of rough path theory to
càdlàg paths, cf. \citep{Chevyrev2017,Friz2012}.

In order to apply the pathwise solution theory for Volterra equations
driven by convolutional rough paths to stochastic Volterra equtaions,
we construct convolutional rough paths for a large class of stochastic
processes satisfying a hypercontractivity property. Examples include
many Gaussian processes such as fractional Brownian motion with Hurst
index~$H>1/3$. As a consequence, we obtain unique solutions to stochastic
Volterra equations driven by Gaussian processes, extending most literature
which focuses on driving signals given by semi-martinagles. Furthermore,
the approach developed here based on paracontrolled distributions
constitutes a solution theory of stochastic differential equations
in the sense of Stratonovich integration. This complements the related
literature about stochastic Volterra equations, which focuses on (generalization
of) It\^o integration, except, of course, the works \citep{Deya2009,Deya2011,Gubinelli2010}
relying on rough path theory. Another advantage of the pathwise approach
is that it can immediately deal with stochastic Volterra equations
with anticipating coefficients, cf. the seminal work of \citep{PardouxProtter1990}. 

\medskip{}

\textit{Plan of the paper:} In Section~\ref{sec:setting} the functional
analytic foundation is provided. Section~\ref{sec:convolution} establishes
the existence and uniqueness results for Volterra equations. The connection
to the classical rough path theory and the probabilistic construction
of the resonant term for suitable stochastic processes can be found
in Section~\ref{sec:resonant term}. Applications of the pathwise
results to various types of stochastic Volterra equations are presented
in Section~\ref{sec:examples}. Appendix~\ref{sec:appendix} collects
several auxiliary lemmas concerning Besov spaces. 

\subsection{Setting up the Volterra equation}

In the rest of the paper, we study the following class of Volterra
equations of convolution type

\begin{equation}
u(t)=u_{0}(t)+\big(\phi_{1}*(\sigma_{1}(u)\xi_{1})\big)(t)+\big(\phi_{2}*(\sigma_{2}(u)\xi_{2})\big)(t),\quad t\in\R,\label{eq:convol}
\end{equation}
where
\begin{itemize}
\item the convolution operator $*$ is defined by 
\[
(f\ast g)(y):=\int_{\R}f(y-x)g(x)\d x,\quad y\in\R,
\]
with the usual generalization for distributions $f$ and $g$,
\item $u_{0}\colon\mathbb{R}\to\mathbb{R}^{n}$ is the initial condition,
\item $\phi_{j}\colon\R\to\R$ are the kernels (or kernel functions) for
$j=1,2$,
\item $\sigma_{j}\colon\mathbb{R}^{n}\to\mathcal{L}(\mathbb{R}^{m},\mathbb{R}^{n})$
are vector fields for $j=1,2$,
\item $\xi_{1}\colon\R\to\R^{m}$ and $\xi_{2}\colon\R\to\R^{m}$ is a possibly
rough and smoother signal, respectively. 
\end{itemize}
Comparing (\ref{eq:intro equation}) and (\ref{eq:convol}), the signal
$\xi_{1}$ corresponds to the (distributional) derivative of $\theta$
and the integral boundaries $(-\infty,t]$ are included via kernel
functions of the form $\phi_{1}=\ind_{[0,\infty)}\phi$. Throughout
the paper we refer to $\phi_{1}*(\sigma_{1}(u)\xi_{1})$ as the \emph{rough
term} and to $\phi_{2}*(\sigma_{2}(u)\xi_{2})$ as the \emph{drift
term}, making for simplicity the assumption that also $\phi_{1},\xi_{1}$
are less regular than $\phi_{2},\xi_{2}$, respectively. Let us remark
that distinguishing between a rough and a drift term allows for sharper
regularity conditions on the respective vector fields $\sigma_{1}$
and $\sigma_{2}$, cf. the notion of $(p,q)$-rough paths \citep{Lejay2006}.

\section{Bony's paraproduct and Besov spaces\label{sec:setting} }

Let us briefly set up the functional analytic framework. We begin
by recalling the notion of Besov spaces in terms of the Littlewood-Paley
decomposition. For a more general introduction we refer to \citet{Bahouri2011},
\citet{Sawano2018} and \citet{triebel2010}. \\

For the sake of clarification let us mention that $L^{p}(\R^{d},\R^{m\times n})$
denotes the space of Lebesgue $p$-integrable functions with norm
$\|\cdot\|_{L^{p}}$ for $p\in[1,\infty)$ and $L^{\infty}(\R^{d},\R^{m\times n})$
denotes the space of bounded functions with corresponding norm $\|\cdot\|_{\infty}$.
The space of Schwartz functions on $\R^{d}$ is denoted by $\mathcal{S}(\R^{d}):=\mathcal{S}(\R^{d},\R^{m\times n})$
and its dual by $\mathcal{S}^{\prime}(\R^{d}):=\mathcal{S}^{\prime}(\mathbb{R}^{d},\mathbb{R}^{m\times n})$,
which is the space of tempered distributions.

For a function $f\in L^{1}(\R^{d},\R^{m\times n})$ the Fourier transform
and its inverse are defined by

\[
\mathcal{F}f(z):=\int_{\mathbb{R}^{d}}e^{-i\langle z,x\rangle}f(x)\d x\quad\text{and}\quad\mathcal{F}^{-1}f(z):=(2\pi)^{-d}\mathcal{F}f(-z).
\]
If $f\in\mathcal{S}^{\prime}(\R^{d})$, then the usual generalization
of the Fourier transform is considered. 

The Littlewood-Paley theory is based on a localization in the frequency
domain by a \textit{dyadic partition of unity}~$(\chi,\rho)$, that
is, $\chi$ and $\rho$ are non-negative infinitely differentiable
radial functions on $\mathbb{R}^{d}$ such that $\supp\chi\subset\mathcal{B}$
and $\supp\rho\subset\mathcal{A}$ for a ball $\mathcal{B}\subset\R^{d}$
and an annulus $\mathcal{A}\subset\R^{d}$, $\chi(z)+\sum_{j\geq0}\rho(2^{-j}z)=1$
for all $z\in\mathbb{R}^{d}$, $\textup{supp}(\chi)\cap\textup{supp}(\rho(2^{-j}\cdot))=\emptyset$
for $j\geq1$, and $\textup{supp}(\rho(2^{-i}\cdot))\cap\textup{supp}(\rho(2^{-j}\cdot))=\emptyset$
for $\vert i-j\vert>1$. We set throughout 
\[
\rho_{-1}:=\chi\quad\text{and}\quad\rho_{j}:=\rho(2^{-j}\cdot)\quad\text{for }j\ge0.
\]
Given a dyadic partition of unity $(\chi,\rho),$ the \emph{Littlewood-Paley
blocks} are defined by 
\[
\Delta_{-1}f:=\mathcal{F}^{-1}(\rho_{-1}\mathcal{F}f)\quad\text{and}\quad\Delta_{j}f:=\mathcal{F}^{-1}(\rho_{j}\mathcal{F}f)\quad\text{for }j\geq0.
\]
Note that $\Delta_{j}f$ is a smooth function for every $j\geq-1$
and for every $f\in\mathcal{S}^{\prime}(\R^{d})$ one has $f=\sum_{j\geq-1}\Delta_{j}f.$
For $\alpha\in\mathbb{R}$ and $p,q\in[1,\infty]$ the \emph{Besov
space} $\mathcal{B}_{p,q}^{\alpha}(\mathbb{R}^{d},\mathbb{R}^{m\times n})$
is given by 
\begin{align*}
\mathcal{B}_{p,q}^{\alpha}(\mathbb{R}^{d},\mathbb{R}^{m\times n}) & :=\big\{ f\in\mathcal{S}^{\prime}(\mathbb{R}^{d},\mathbb{R}^{m\times n})\ :\ \|f\|_{\alpha,p,q}<\infty\big\}\\
 & \text{with}\quad\|f\|_{\alpha,p,q}:=\Big\|\big(2^{j\alpha}\|\Delta_{j}f\|_{L^{p}}\big)_{j\ge-1}\Big\|_{\ell^{q}}.
\end{align*}
Although the norm $\|\cdot\|_{\alpha,p,q}$ depends on the dyadic
partition $(\chi,\rho)$, different dyadic partitions of unity lead
to equivalent norms (see \citep[Corollary~2.70]{Bahouri2011}). Whenever
the dimension of the image space is clear from the context, we write
$\mathcal{B}_{p,q}^{\alpha}(\R{}^{d}):=\mathcal{B}_{p,q}^{\alpha}(\mathbb{R}^{d},\mathbb{R}^{m\times n})$
and $\mathcal{B}_{p,q}^{\alpha}:=\mathcal{B}_{p,q}^{\alpha}(\mathbb{R},\mathbb{R}^{m\times n})$
and analogous abbreviations for $L^{p}(\R^{d},\R^{m\times n})$. The
special case of Hölder-Zygmund spaces is denoted by $\mathcal{C}^{\alpha}:=\mathcal{B}_{\infty,\infty}^{\alpha}$
with corresponding norms $\|\cdot\|_{\mathcal{C}^{\alpha}}:=\|\cdot\|_{\alpha,\infty,\infty}$
for $\alpha>0$. In the following we will frequently apply embedding
results for Besov spaces, which can be found for example in \citep[Proposition~2.5.7 and Theorem~2.7.1]{triebel2010}.\\

Let us fix the notation $A_{\theta}\lesssim B_{\theta}$, for a generic
parameter $\theta$, meaning that $A_{\theta}\le CB_{\theta}$ for
some constant $C>0$ independent of $\theta$. We write $A_{\theta}\sim B_{\theta}$
if $A_{\theta}\lesssim B_{\theta}$ and $B_{\theta}\lesssim A_{\theta}$.
For integers $j_{\theta},k_{\theta}\in\Z$ we write $j_{\theta}\lesssim k_{\theta}$
if there is some $N\in\N$ such that $j_{\theta}\le k_{\theta}+N$,
and $j_{\theta}\sim k_{\theta}$ if $j_{\theta}\lesssim k_{\theta}$
and $k_{\theta}\lesssim j_{\theta}$.\\

Given $f\in\mathcal{B}_{p_{1,}q_{1}}^{\alpha}(\R^{d})$ and $g\in\mathcal{B}_{p_{2},q_{2}}^{\beta}(\R^{d})$,
we can formally decompose the product $fg$ in terms of Littlewood-Paley
blocks as
\begin{equation}
fg=\sum_{j\geq-1}\sum_{i\geq-1}\Delta_{i}f\Delta_{j}g=T_{f}g+T_{g}f+\pi(f,g)\label{eq:bony decomposition}
\end{equation}
where 
\[
T_{f}g:=\sum_{j\geq-1}\bigg(\sum_{i\leq j-2}\Delta_{i}f\bigg)\Delta_{j}g\quad\text{and}\quad\pi(f,g):=\sum_{\vert i-j\vert\leq1}\Delta_{i}f\Delta_{j}g.
\]
This decomposition was originally introduced by \citet{Bony1981}
and $\pi(f,g)$ is usually called \textit{resonant term}. The following
paraproduct estimates verify the importance of \emph{Bony's decomposition}.
For the proof of this lemma, we refer to \citep[Theorem~2.82 and~2.85]{Bahouri2011}
and \citep[Lemma~2.1]{Promel2015}.
\begin{lem}[Bony's paraproduct estimates]
\label{lem:paraproduct} Let $\alpha,\beta\in\mathbb{R}$ and $p_{1},p_{2},q_{1},q_{2}\in[1,\infty]$
and suppose that 
\[
\frac{1}{p}:=\frac{1}{p_{1}}+\frac{1}{p_{2}}\le1\quad\text{and}\quad\frac{1}{q}:=\frac{1}{q_{1}}+\frac{1}{q_{2}}\le1.
\]

\begin{enumerate}
\item If $(f,g)\in L^{p_{1}}(\R^{d})\times B_{p_{2},q}^{\beta}(\R^{d})$,
then $\|T_{f}g\|_{\beta,p,q}\lesssim\|f\|_{L^{p_{1}}}\|g\|_{\beta,p_{2},q}$.
\item If $\alpha<0$ and $(f,g)\in B_{p_{1},q_{1}}^{\alpha}(\R^{d})\times B_{p_{2},q_{2}}^{\beta}(\R^{d})$,
then $\|T_{f}g\|_{\alpha+\beta,p,q}\lesssim\|f\|_{\alpha,p_{1},q_{1}}\|g\|_{\beta,p_{2},q_{2}}$.
\item If $\alpha+\beta>0$ and $(f,g)\in B_{p_{1},q_{1}}^{\alpha}(\R^{d})\times B_{p_{2},q_{2}}^{\beta}(\R^{d})$,
then $\|\pi(f,g)\|_{\alpha+\beta,p,q}\lesssim\|f\|_{\alpha,p_{1},q_{1}}\|g\|_{\beta,p_{2},q_{2}}$.
\end{enumerate}
\end{lem}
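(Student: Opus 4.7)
The plan is to exploit the spectral localization properties of each building block: for any $j$, the Fourier support of $\Delta_j g$ sits in a dyadic annulus $2^j \mathcal{A}$, while $S_{j-1} f := \sum_{i\le j-2}\Delta_i f$ has Fourier support in a ball of radius $\sim 2^{j-1}$. Consequently the summand $S_{j-1}f\,\Delta_j g$ appearing in $T_f g$ has Fourier support in an annulus of scale $2^j$, so $\Delta_k(T_f g)$ only receives contributions from $j\sim k$. In contrast, $\Delta_i f\,\Delta_j g$ in $\pi(f,g)$ (with $|i-j|\le 1$) has Fourier support only in a ball of radius $\sim 2^{\max(i,j)}$, so $\Delta_k \pi(f,g)$ only receives contributions from $i\gtrsim k$. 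Throughout I would use Bernstein's inequality and H\"older's inequality to estimate $\|\Delta_k(\cdot)\|_{L^p}$, followed by a $2^{k\cdot}$ weighting and an $\ell^q$ summation via Young's convolution inequality.

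For (i), since $\Delta_k(T_f g) = \sum_{j\sim k}\Delta_k(S_{j-1}f\,\Delta_j g)$, H\"older in space with $1/p = 1/p_1 + 1/p_2$ yields $\|\Delta_k(T_f g)\|_{L^p} \lesssim \|f\|_{L^{p_1}} \sum_{j\sim k}\|\Delta_j g\|_{L^{p_2}}$ using the uniform $L^{p_1}$ boundedness of the partial sum operators $S_{j-1}$. Multiplying by $2^{k\beta}$, replacing $2^{k\beta}$ with $2^{j\beta}$ up to a constant (since $j\sim k$), and taking $\ell^q_k$ of a finite convolution gives the claim. For (ii), the factor $\|S_{j-1}f\|_{L^{p_1}}$ must instead be controlled by $f\in B^{\alpha}_{p_1,q_1}$: summing $\|\Delta_i f\|_{L^{p_1}}$ for $i\le j-2$ and pairing the weights $2^{-i\alpha}$ with $2^{i\alpha}\|\Delta_i f\|_{L^{p_1}}$ via H\"older in $\ell$, the hypothesis $\alpha<0$ makes the geometric series $\sum_{i\le j-2} 2^{-i\alpha q_1'}$ converge with a bound $\sim 2^{-j\alpha q_1'}$; this yields $\|S_{j-1}f\|_{L^{p_1}}\lesssim 2^{-j\alpha}\|f\|_{\alpha,p_1,q_1}$, and the rest proceeds as in (i) with weight $2^{k(\alpha+\beta)}$.

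For (iii), the ball (rather than annulus) localization of $\Delta_i f\,\Delta_j g$ forces the constraint $i\gtrsim k$, so a direct H\"older estimate gives
\[
2^{k(\alpha+\beta)}\|\Delta_k\pi(f,g)\|_{L^p}\lesssim \sum_{i\gtrsim k}2^{(k-i)(\alpha+\beta)}\bigl(2^{i\alpha}\|\Delta_i f\|_{L^{p_1}}\bigr)\bigl(2^{i\beta}\|\tilde\Delta_i g\|_{L^{p_2}}\bigr),
\]
where $\tilde\Delta_i := \Delta_{i-1}+\Delta_i+\Delta_{i+1}$. The assumption $\alpha+\beta>0$ is essential here: it makes the weights $2^{(k-i)(\alpha+\beta)}$ (with $k-i\le N$ for some fixed $N$) summable as $i\to\infty$, allowing an application of Young's convolution inequality on $\Z$ to pass from the pointwise bound to the $\ell^q_k$ norm. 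This step is the main obstacle, because without $\alpha+\beta>0$ the tail $i\to\infty$ diverges and the resonant product cannot be controlled in $B^{\alpha+\beta}_{p,q}$; all other bookkeeping is routine once the spectral supports are correctly identified.
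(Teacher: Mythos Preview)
Your argument is the standard one, and is precisely what the paper defers to in its citations of Bahouri--Chemin--Danchin and Pr\"omel--Trabs; parts~(i) and~(iii) are correct as written.

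There is, however, a small gap in your treatment of~(ii). Your H\"older-in-$\ell$ estimate yields only the pointwise bound $\|S_{j-1}f\|_{L^{p_1}}\lesssim 2^{-j\alpha}\|f\|_{\alpha,p_1,q_1}$, i.e.\ $\bigl(2^{j\alpha}\|S_{j-1}f\|_{L^{p_1}}\bigr)_{j}\in\ell^{\infty}$. Proceeding ``as in (i)'' then gives
\[
2^{k(\alpha+\beta)}\|\Delta_k(T_f g)\|_{L^{p}}\lesssim \|f\|_{\alpha,p_1,q_1}\sum_{j\sim k}2^{j\beta}\|\Delta_j g\|_{L^{p_2}},
\]
and the right-hand side, as a sequence in $k$, lies only in $\ell^{q_2}$, not in $\ell^{q}$ (recall $q\le q_2$ since $1/q=1/q_1+1/q_2$). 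To reach the stated $\ell^{q}$ conclusion you need the sharper fact that $\bigl(2^{j\alpha}\|S_{j-1}f\|_{L^{p_1}}\bigr)_{j}\in\ell^{q_1}$: write
\[
2^{j\alpha}\|S_{j-1}f\|_{L^{p_1}}\le\sum_{i\le j-2}2^{(j-i)\alpha}\bigl(2^{i\alpha}\|\Delta_i f\|_{L^{p_1}}\bigr)
\]
and apply Young's convolution inequality on $\Z$ (the kernel $(2^{m\alpha}\ind_{\{m\ge 2\}})_{m}$ is in $\ell^{1}$ because $\alpha<0$). Then H\"older in $\ell$ with exponents $q_1,q_2$ on the product $\bigl(2^{j\alpha}\|S_{j-1}f\|_{L^{p_1}}\bigr)\bigl(2^{j\beta}\|\Delta_j g\|_{L^{p_2}}\bigr)$ gives the required $\ell^{q}$ bound. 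This is exactly the mechanism you already invoke for~(iii); you just need to use it one step earlier here rather than collapsing to an $\ell^{\infty}$ estimate.
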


In order to analyse the smoothing property of the convolution operator
$*$ appearing in the Volterra equation~(\ref{eq:convol}), we 
provide the following Young inequality and its proof since the authors
are not aware of a reference for this result in the stated generality.
\begin{lem}[Generalized Young's inequality]
\label{lem:Convolution} Let $\alpha,\beta\in\R$, $d\in\N$ and
$p_{1},p_{2},q_{1},q_{2}\in[1,\infty]$ satisfying
\[
0\le\frac{1}{p}:=\frac{1}{p_{1}}+\frac{1}{p_{2}}-1\le1\quad\text{and}\quad0\le\frac{1}{q}:=\frac{1}{q_{1}}+\frac{1}{q_{2}}\le1.
\]
Then, for any \textup{$f\in\mathcal{B}_{p_{1},q_{1}}^{\alpha}(\R^{d})$
and $g\in\mathcal{B}_{p_{2},q_{2}}^{\beta}(\R^{d})$ we have} $f\ast g\in\mathcal{B}_{p,q}^{\alpha+\beta}(\R^{d})$\textup{
with} 
\[
\|f\ast g\|_{\alpha+\beta,p,q}\lesssim\|f\|_{\alpha,p_{1},q_{1}}\|g\|_{\beta,p_{2},q_{2}}.
\]
\end{lem}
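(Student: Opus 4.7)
The plan is to reduce to the classical scalar Young's inequality after using the Littlewood--Paley decomposition to show that $\Delta_k(f\ast g)$ is essentially the convolution of $\Delta_k f$ with a \emph{uniformly bounded} number of blocks $\Delta_j g$. The three ingredients are then: a frequency localization, the standard $L^p$ Young inequality for each block, and Hölder in $\ell^q$ for the weighted sequences.

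First I would exploit the fact that Fourier multipliers commute with convolution: writing $\mathcal{F}(\Delta_k(f\ast g)) = \rho_k\,\mathcal{F}f\,\mathcal{F}g = \mathcal{F}(\Delta_k f)\,\mathcal{F}g$, we get $\Delta_k(f\ast g) = (\Delta_k f)\ast g = \sum_{j\ge-1}(\Delta_k f)\ast(\Delta_j g)$. Because $\Delta_k f$ has Fourier support in $2^k\mathcal{A}$ (or in $\mathcal{B}$ when $k=-1$) and $\Delta_j g$ has Fourier support in $2^j\mathcal{A}$ (or $\mathcal{B}$), the product $\mathcal{F}(\Delta_k f)\mathcal{F}(\Delta_j g)$ vanishes whenever $|j-k|>N$ for some fixed $N\in\N$ determined only by the dyadic partition of unity. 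Therefore
\[
\Delta_k(f\ast g)=\sum_{|j-k|\le N}(\Delta_k f)\ast(\Delta_j g).
\]

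Next, on each of the finitely many surviving terms I apply the classical Young inequality on $\R^d$ in the form $\|h_1\ast h_2\|_{L^p}\le\|h_1\|_{L^{p_1}}\|h_2\|_{L^{p_2}}$, which is valid precisely under the assumption $1/p=1/p_1+1/p_2-1\in[0,1]$. This gives
\[
\|\Delta_k(f\ast g)\|_{L^p}\lesssim\sum_{|j-k|\le N}\|\Delta_k f\|_{L^{p_1}}\|\Delta_j g\|_{L^{p_2}}.
\]
Multiplying by $2^{k(\alpha+\beta)}$ and absorbing the factor $2^{(k-j)\beta}$, which is bounded uniformly on the finite range $|j-k|\le N$, yields $2^{k(\alpha+\beta)}\|\Delta_k(f\ast g)\|_{L^p}\lesssim a_k\tilde b_k$ where $a_k:=2^{k\alpha}\|\Delta_k f\|_{L^{p_1}}$ and $\tilde b_k:=\sum_{|j-k|\le N}2^{j\beta}\|\Delta_j g\|_{L^{p_2}}$.

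Finally, I would take the $\ell^q$ norm in $k$. Young's inequality for sequences (convolution with a compactly supported kernel) gives $\|\tilde b_k\|_{\ell^{q_2}}\lesssim\|g\|_{\beta,p_2,q_2}$, and then Hölder in $\ell^q=\ell^{q_1}\cdot\ell^{q_2}$, valid under $1/q=1/q_1+1/q_2\in[0,1]$, produces
\[
\|f\ast g\|_{\alpha+\beta,p,q}\lesssim\|a_k\|_{\ell^{q_1}}\|\tilde b_k\|_{\ell^{q_2}}\lesssim\|f\|_{\alpha,p_1,q_1}\|g\|_{\beta,p_2,q_2},
\]
which is the claim. The only mildly delicate point is the book-keeping for the block $k=-1$ (and symmetrically $j=-1$), since $\chi$ is supported in a ball rather than an annulus; but this only affects finitely many indices in the localization, which can be absorbed into the constant $N$, so the main estimate goes through unchanged.
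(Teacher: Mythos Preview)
Your proof is correct and follows essentially the same overall architecture as the paper's argument: reduce each Littlewood--Paley block of $f\ast g$ to a product of two localized pieces via classical Young, then combine with H\"older in $\ell^{q}$. The difference lies in how the block is factored. You write $\Delta_{k}(f\ast g)=(\Delta_{k}f)\ast g$ and then use the disjointness of the Fourier supports to restrict to $\sum_{|j-k|\le N}(\Delta_{k}f)\ast(\Delta_{j}g)$, whereas the paper factors symmetrically via $\rho_{k}=\rho_{k}^{1/2}\rho_{k}^{1/2}$, writing $\Delta_{k}(f\ast g)=\F^{-1}[\rho_{k}^{1/2}\F f]\ast\F^{-1}[\rho_{k}^{1/2}\F g]$, and then has to establish separately that $\big\|\big(2^{k\alpha}\|\F^{-1}[\rho_{k}^{1/2}\F f]\|_{L^{p_{1}}}\big)_{k}\big\|_{\ell^{q_{1}}}\lesssim\|f\|_{\alpha,p_{1},q_{1}}$. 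Your route is slightly more elementary in that it bypasses this auxiliary multiplier estimate entirely; the paper's route is more symmetric in $f$ and $g$ from the outset. Both finish identically with a discrete Young/H\"older step, and your handling of the $k=-1$ block is adequate.
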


\begin{proof}
The Littlewood-Paley blocks of the convolution satisfy
\[
\Delta_{j}(f\ast g)=\F^{-1}\big[\rho_{j}\F f\F g\big]=\F^{-1}[\rho_{j}^{1/2}\F f]\ast\F^{-1}[\rho_{j}^{1/2}\F g],\quad j\ge-1.
\]
Using Young's inequality for $L^{p}$-spaces, we bound
\[
2^{j(\alpha+\beta)}\|\Delta_{j}(f\ast g)\|_{L^{p}}\le\big(2^{j\alpha}\|\F^{-1}[\rho_{j}^{1/2}\F f]\|_{L^{p_{1}}}\big)\big(2^{j\beta}\|\F^{-1}[\rho_{j}^{1/2}\F g]\|_{L^{p_{2}}}\big).
\]
Hence, by the Cauchy-Schwarz inequality it suffices to show
\begin{equation}
\big\|\big(2^{j\alpha}\|\F^{-1}[\rho_{j}^{1/2}\F f]\|_{L^{p_{1}}}\big)_{j\ge-1}\big\|_{\ell^{q_{1}}}\lesssim\|f\|_{\alpha,p_{1},q_{1}}\label{eq:PrConv}
\end{equation}
(and consequently the analogous estimate holds true for $g$). To
verify (\ref{eq:PrConv}), we decompose $f=\sum_{j}\Delta_{j}f$.
Due to the compact support of $\rho_{j}$ and the classical Young
inequality, we obtain
\begin{align*}
2^{j\alpha}\|\F^{-1}[\rho_{j}^{1/2}\F f]\|_{L^{p_{1}}}\le & 2^{j\alpha}\sum_{j'}\big\|\F^{-1}\big[\rho_{j}^{1/2}\F[\Delta_{j'}f]\big]\big\|_{L^{p_{1}}}\\
\le & 2^{j\alpha}\sum_{|j-j'|\le1}\|\F^{-1}[\rho_{j}^{1/2}]\|_{L^{1}}\|\Delta_{j}f\|_{L^{p_{1}}}\\
\lesssim & \sum_{j'}\big(2^{-(j'-j)\alpha}\ind_{[-1,1]}(j'-j)\big)\big(2^{j'\alpha}\|\Delta_{j'}f\|_{L^{p_{1}}}\big).
\end{align*}
Again by Young's inequality (applied to $\ell^{q_{1}}$) we conclude
\[
\big\|\big(2^{j\alpha}\|\F^{-1}[\rho_{j}^{1/2}\F f]\|_{L^{p_{1}}}\big)_{j\ge-1}\big\|_{\ell^{q_{1}}}\le\big\|\big(2^{-j\alpha}\ind_{[-1,1]}(j)\big)_{j\ge-1}\big\|_{\ell^{1}}\|f\|_{\alpha,p_{1},q_{1}}\le(2^{|\alpha|}+2)\|f\|_{\alpha,p_{1},q_{1}}.
\]
\end{proof}
In order to quantify the regularity of the vector fields appearing
in the Volterra equation~(\ref{eq:convol}) we follow the convention
by Stein (cf.\texttt{~}\citep[Definition~3.1]{Friz2010}): For operator-valued
functions $F\colon\mathbb{R}^{m}\to\mathcal{L}(\mathbb{R}^{n},\mathbb{R}^{m})$
we write $F\in C^{k}$ for $k\in\mathbb{N}$, if $F$ is bounded,
continuous and $k$-times differentiable with bounded and continuous
derivatives. The first and second derivative are denoted by $F^{\prime}$
and $F^{\prime\prime}$, respectively, and higher derivatives by $F^{(k)}$.
The space $C^{k}$ is equipped with the norms 
\[
\|F\|_{\infty}:=\sup_{x\in\R^{m}}\|F(x)\|\quad\mbox{and}\quad\|F\|_{C^{k}}:=\|F\|_{\infty}+\sum_{j=1}^{k}\|F^{(k)}\|_{\infty},
\]
where $\|\cdot\|$ denotes the corresponding operator norms.

\section{Existence and uniqueness results for Volterra equations\label{sec:convolution}}

Let us briefly recall the Volterra equation~(\ref{eq:convol}), which
was given by

\[
u(t)=u_{0}(t)+\big(\phi_{1}*(\sigma_{1}(u)\xi_{1})\big)(t)+\big(\phi_{2}*(\sigma_{2}(u)\xi_{2})\big)(t),\quad t\in\R,
\]
for an initial condition $u_{0}\colon\mathbb{R}\to\mathbb{R}^{n}$,
vector fields $\sigma_{j}\colon\mathbb{R}^{n}\to\mathcal{L}(\mathbb{R}^{m},\mathbb{R}^{n})$,
kernel functions $\phi_{j}\colon\R\to\R$ and driving signals $\xi_{j}\colon\R\to\R^{m}$,
for $j=1,2$. While the convolution is always well-defined for any
function or distribution in a Besov space (cf. Lemma~\ref{lem:Convolution}),
the product $\sigma_{j}(u)\xi_{j}$ requires sufficient Besov regularity
of the involved functions (cf. Lemma~\ref{lem:paraproduct}). This
statement will be made precise in the next subsection.

\subsection{Regular driving signals\label{subsec: young case}}

To analyse the product $\sigma_{j}(u)\xi_{j}$ more carefully, we
suppose that the driving signals satisfy $\xi_{j}\in\mathcal{B}_{p,\infty}^{\beta_{j}-1}$
with $\beta_{j}>0$ and $p\geq2$, for $j=1,2$. We further assume
that the corresponding solution~$u$ of the Volterra equation~(\ref{eq:convol})
fulfills $u\in\mathcal{B}_{p,\infty}^{\alpha}$ for some regularity
$\alpha\ge\frac{1}{p}$. In this case Bony's decomposition~(\ref{eq:bony decomposition}),
the paraproduct estimates (Lemma~\ref{lem:paraproduct}) and the
Besov embedding $B_{p/2,\infty}^{\alpha+\beta_{j}-1}\subset B_{p,\infty}^{\beta_{j}-1}$
applied to the problematic product yields 
\begin{equation}
\sigma_{j}(u)\xi_{j}=\underbrace{T_{\sigma_{j}(u)}\xi_{j}}_{\in\mathcal{B}_{p,\infty}^{\beta_{j}-1}}+\underbrace{\pi(\sigma_{j}(u),\xi_{j})}_{\in\mathcal{B}_{p/2,\infty}^{\alpha+\beta_{j}-1}}+\underbrace{T_{\xi_{j}}\sigma_{j}(u)}_{\mathcal{B}_{p/2,\infty}^{\alpha+\beta_{j}-1}}\in\mathcal{B}_{p,\infty}^{\beta_{j}-1}\quad\mbox{if }\alpha+\beta_{j}>1,\,p\ge2.\label{eq:bonyDecompYoung}
\end{equation}
Notice that the Young type condition $\alpha+\beta_{j}>1$ is crucial
for the regularity estimate of the resonant term $\pi(\sigma_{j}(u),\xi_{j})$.
If $\phi_{j}\in\mathcal{B}_{1,\infty}^{\gamma_{j}}$ for some $\gamma_{j}\ge0$,
then Young's inequality (Lemma~\ref{lem:Convolution}) combined with
(\ref{eq:bonyDecompYoung}) yields
\[
\phi_{j}\ast(\sigma_{j}(u)\xi_{j})\in\mathcal{B}_{p,\infty}^{\beta_{j}+\gamma_{j}-1}.
\]
In view of the Volterra equation~(\ref{eq:convol}) we obtain the
relationship
\[
\alpha=\min\{\beta_{1}+\gamma_{1}-1,\beta_{2}+\gamma_{2}-1\}.
\]
In the following we associate the ``rougher'' signal with the first
convolution term and thus assume $\beta_{1}\le\beta_{2}$ and $\gamma_{1}\le\gamma_{2}$.
The Young type condition $\alpha+\beta_{j}>1$ is then equivalent
to $2\beta_{1}+\gamma_{1}>2$. \\

Applying a fixed point argument, we first prove the existence of a
unique solution to the Volterra equation~(\ref{eq:convol}) in this
Young setting. Afterwards we will relax the regularity assumptions
allowing for a more irregular driving signal~$\xi_{1}$ in (\ref{eq:convol}),
see Subsection~\ref{subsec:rough driving singals}.
\begin{prop}
\label{prop:YoungLip} Let $p\ge2$, $0<\beta_{1}\le\beta_{2}$ and
$0<\gamma_{1}\le\gamma_{2}$ such that
\[
\alpha:=\beta_{1}+\gamma_{1}-1\in(1/p,1]\quad\text{and}\quad2\beta_{1}+\gamma_{1}>2.
\]
Suppose $u_{0}\in\mathcal{B}_{p,\infty}^{\alpha}$, $\xi_{j}\in\mathcal{B}_{p,\infty}^{\beta_{j}-1}$,
$\phi_{j}\in\mathcal{B}_{1,\infty}^{\gamma_{j}}$ and $\sigma_{j}\in C^{2}$
with $\sigma_{j}(0)=0$, for $j=1,2$. If $\max_{j=1,2}\|\sigma_{j}\|_{C^{2}}$
is sufficiently small depending on $\max_{j=1,2}\|\phi_{j}\|_{\gamma_{j},1,\infty}$,
$\max_{j=1,2}\|\xi_{j}\|_{\beta_{j}-1,p,\infty}$ and $\|u_{0}\|_{\alpha,p,\infty}$,
then the Volterra equation~(\ref{eq:convol}) has a unique solution
$u\in\mathcal{B}_{p,\infty}^{\alpha}$.
\end{prop}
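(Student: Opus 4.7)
The plan is to cast \eqref{eq:convol} as a fixed point equation $u=\Psi(u)$ on a suitable closed ball of $\mathcal{B}_{p,\infty}^\alpha$ and invoke the Banach fixed point theorem. Concretely, I would define
\[
\Psi(u):=u_0+\phi_1\ast(\sigma_1(u)\xi_1)+\phi_2\ast(\sigma_2(u)\xi_2),
\]
and consider the closed ball $B_R:=\{u\in\mathcal{B}_{p,\infty}^\alpha:\|u\|_{\alpha,p,\infty}\le R\}$ with $R:=2\|u_0\|_{\alpha,p,\infty}$ (with the understanding that the ball is nonempty and complete).

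The first step is to show $\Psi(B_R)\subset B_R$. The generalized Young inequality (Lemma~\ref{lem:Convolution}) gives, for $j=1,2$,
\[
\|\phi_j\ast(\sigma_j(u)\xi_j)\|_{\beta_j+\gamma_j-1,p,\infty}\lesssim\|\phi_j\|_{\gamma_j,1,\infty}\|\sigma_j(u)\xi_j\|_{\beta_j-1,p,\infty},
\]
and since $\beta_j+\gamma_j-1\ge\alpha$, this controls the $\mathcal{B}_{p,\infty}^\alpha$-norm of each term. The product $\sigma_j(u)\xi_j$ is handled by Bony's decomposition~\eqref{eq:bonyDecompYoung}: by Lemma~\ref{lem:paraproduct} together with the Besov embedding $\mathcal{B}_{p/2,\infty}^{\alpha+\beta_j-1}\hookrightarrow\mathcal{B}_{p,\infty}^{\beta_j-1}$ (which requires precisely $\alpha\ge 1/p$) and the Young type condition $\alpha+\beta_j>1$ (guaranteed by $2\beta_1+\gamma_1>2$ and $\beta_2\ge\beta_1$),
\[
\|\sigma_j(u)\xi_j\|_{\beta_j-1,p,\infty}\lesssim\|\xi_j\|_{\beta_j-1,p,\infty}\bigl(\|\sigma_j(u)\|_\infty+\|\sigma_j(u)\|_{\alpha,p,\infty}\bigr).
\]
Using a Moser/composition estimate of the form $\|\sigma_j(u)\|_{\alpha,p,\infty}\lesssim\|\sigma_j\|_{C^1}\|u\|_{\alpha,p,\infty}$ (justified by $\sigma_j(0)=0$ together with the embedding $\mathcal{B}_{p,\infty}^\alpha\hookrightarrow L^\infty$ since $\alpha>1/p$), I would conclude
\[
\|\Psi(u)\|_{\alpha,p,\infty}\le\|u_0\|_{\alpha,p,\infty}+C\max_j\|\sigma_j\|_{C^2}\cdot\Pi(R),
\]
where $\Pi(R)$ depends on $R$, $\max_j\|\phi_j\|_{\gamma_j,1,\infty}$ and $\max_j\|\xi_j\|_{\beta_j-1,p,\infty}$. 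Taking $\max_j\|\sigma_j\|_{C^2}$ small enough yields the self-mapping property.

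For the contraction step I write
\[
\Psi(u)-\Psi(v)=\sum_{j=1,2}\phi_j\ast\bigl((\sigma_j(u)-\sigma_j(v))\xi_j\bigr)
\]
and repeat the same chain of estimates, this time relying on the Lipschitz composition bound
\[
\|\sigma_j(u)-\sigma_j(v)\|_{\alpha,p,\infty}\lesssim\|\sigma_j\|_{C^2}\bigl(1+\|u\|_{\alpha,p,\infty}+\|v\|_{\alpha,p,\infty}\bigr)\|u-v\|_{\alpha,p,\infty},
\]
obtained from the representation $\sigma_j(u)-\sigma_j(v)=\bigl(\int_0^1\sigma_j'(v+t(u-v))\,\dd t\bigr)(u-v)$ combined with paraproduct bounds on the resulting product and a further composition estimate on $\sigma_j'$. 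The resulting Lipschitz constant of $\Psi$ on $B_R$ is again proportional to $\max_j\|\sigma_j\|_{C^2}$, so a (possibly smaller) smallness assumption makes $\Psi$ a strict contraction. Banach's fixed point theorem then delivers a unique $u\in B_R$ solving~\eqref{eq:convol}.

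The principal technical hurdle is the composition/Moser estimate and its Lipschitz variant: one needs to control Littlewood--Paley blocks of $\sigma_j(u)$ and of $\sigma_j(u)-\sigma_j(v)$ in $L^p$ with the correct $\alpha$-weighting, starting only from the $C^2$-smoothness of $\sigma_j$. The restriction $\alpha\in(1/p,1]$ is tailored precisely to this step: the upper bound $\alpha\le 1$ allows the use of first-order difference characterisations of the Besov norm (so that a single Taylor expansion of $\sigma_j$ suffices), while the lower bound $\alpha>1/p$ provides the embedding into $L^\infty$ needed to keep $\sigma_j(u)$ and its derivatives uniformly bounded along the iteration. Once these estimates are in place, all remaining bounds are routine applications of Lemmas~\ref{lem:paraproduct} and~\ref{lem:Convolution} together with standard Besov embeddings.
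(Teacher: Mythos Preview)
Your proposal is correct and follows essentially the same route as the paper: a Banach fixed point argument on a ball in $\mathcal{B}_{p,\infty}^{\alpha}$, with the self-mapping and contraction estimates obtained by combining the generalized Young inequality (Lemma~\ref{lem:Convolution}), Bony's decomposition with the paraproduct estimates (Lemma~\ref{lem:paraproduct}), and the composition bounds that the paper records as Lemma~\ref{lem:differenceBesovNorm}. The only cosmetic difference is the choice of radius of the invariant ball (you take $R=2\|u_0\|_{\alpha,p,\infty}$, the paper takes $2K^2$ with $K\ge1$ absorbing also the implicit constant from the estimates), which does not affect the argument.
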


Let us remark that the assumption $\alpha>\frac{1}{p}$ in Proposition~\ref{prop:YoungLip}
is only used for the embedding $\mathcal{B}_{p,\infty}^{\alpha}\subset L^{\infty}$.
If we separately control the norms $\|\cdot\|_{\alpha,p.\infty}$
and $\|\cdot\|_{\infty}$ of the solution $u$, we may allow for $u\in\mathcal{B}_{p,\infty}^{1/p}$.
This implies that the solution~$u$ of the Volterra equation~(\ref{eq:convol})
may have jumps but these jumps can only come from the initial condition
$u_{0}$. This observation leads to the next proposition. 
\begin{prop}
\label{prop:YoungLip with jumps} Let $p\ge2$, $0<\beta_{1}\le\beta_{2}$,
$0<\gamma_{1}\le\gamma_{2}$ such that
\[
\beta_{1}+\gamma_{1}-1\in(1/p,1]\quad\text{and}\quad\beta_{1}+1/p>1.
\]
Suppose $u_{0}\in\mathcal{B}_{p,\infty}^{1/p}\cap L^{\infty}$, $\xi_{j}\in\mathcal{B}_{p,\infty}^{\beta_{j}-1}$,
$\phi_{j}\in\mathcal{B}_{1,\infty}^{\gamma_{j}}$ and $\sigma_{j}\in C^{2}$
with $\sigma_{j}(0)=0$, for $j=1,2$. If $\max_{j=1,2}\|\sigma_{j}\|_{C^{2}}$
is sufficiently small depending on $\|u_{0}\|_{\frac{1}{p},p,\infty}+\|u_{0}\|_{\infty}$,
$\max_{j=1,2}\|\phi_{j}\|_{\gamma_{j},1,\infty}$ and $\max_{j=1,2}\|\xi_{j}\|_{\beta_{j}-1,p,\infty}$,
then the Volterra equation~(\ref{eq:convol}) has a unique solution
$u\in\mathcal{B}_{p,\infty}^{1/p}\cap L^{\infty}$.
\end{prop}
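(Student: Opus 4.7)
The plan is to follow the fixed point strategy that underpins Proposition~\ref{prop:YoungLip}, but on the strictly larger space $\mathcal{B}_{p,\infty}^{1/p}\cap L^{\infty}$ with the combined norm $\norm u\norm := \|u\|_{1/p,p,\infty}+\|u\|_{\infty}$. Define
\[
\Psi(u)(t) := u_{0}(t)+\big(\phi_{1}*(\sigma_{1}(u)\xi_{1})\big)(t)+\big(\phi_{2}*(\sigma_{2}(u)\xi_{2})\big)(t),
\]
and work on a closed ball $B_{R}:=\{u : \norm u\norm\le R\}$, where $R$ is chosen slightly larger than $\norm{u_{0}}\norm$. Because the embedding $\mathcal{B}_{p,\infty}^{1/p}\subset L^{\infty}$ fails, the key idea, as foreshadowed in the remark after Proposition~\ref{prop:YoungLip}, is to estimate the two pieces of $\norm{\,\bull\,}\norm$ separately and never invoke Sobolev embedding at the critical regularity.

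For $u\in B_{R}$ and $j\in\{1,2\}$ I would apply Bony's decomposition $\sigma_{j}(u)\xi_{j}=T_{\sigma_{j}(u)}\xi_{j}+T_{\xi_{j}}\sigma_{j}(u)+\pi(\sigma_{j}(u),\xi_{j})$. By Lemma~\ref{lem:paraproduct}(i) the first paraproduct lies in $\mathcal{B}_{p,\infty}^{\beta_{j}-1}$ with norm $\lesssim\|\sigma_{j}(u)\|_{\infty}\|\xi_{j}\|_{\beta_{j}-1,p,\infty}$. Using $\sigma_{j}(u)\in\mathcal{B}_{p,\infty}^{1/p}$ (see the composition step below), Lemma~\ref{lem:paraproduct}(ii) yields $T_{\xi_{j}}\sigma_{j}(u)\in\mathcal{B}_{p/2,\infty}^{\beta_{j}+1/p-1}$, and the hypothesis $\beta_{1}+1/p>1$ is exactly what Lemma~\ref{lem:paraproduct}(iii) demands to place $\pi(\sigma_{j}(u),\xi_{j})$ into $\mathcal{B}_{p/2,\infty}^{\beta_{j}+1/p-1}$ as well. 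Since $(\beta_{j}+1/p-1)-2/p=(\beta_{j}-1)-1/p$, the Besov embedding $\mathcal{B}_{p/2,\infty}^{\beta_{j}+1/p-1}\hookrightarrow\mathcal{B}_{p,\infty}^{\beta_{j}-1}$ holds without regularity loss, and so $\sigma_{j}(u)\xi_{j}\in\mathcal{B}_{p,\infty}^{\beta_{j}-1}$. Lemma~\ref{lem:Convolution} then gives $\phi_{j}*(\sigma_{j}(u)\xi_{j})\in\mathcal{B}_{p,\infty}^{\beta_{j}+\gamma_{j}-1}$, which since $\beta_{j}+\gamma_{j}-1\ge\alpha>1/p$ embeds into $\mathcal{B}_{p,\infty}^{1/p}\cap L^{\infty}$ in both components of the combined norm. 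Adding $u_{0}$ shows $\Psi(B_{R})\subset B_{R}$ provided $\max_{j}\|\sigma_{j}\|_{C^{2}}$ is small enough.

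The main obstacle is the composition estimate at the critical regularity, namely bounding $\|\sigma_{j}(u)\|_{1/p,p,\infty}$ and $\|\sigma_{j}(u)\|_{\infty}$ in terms of $\norm u\norm$. The $L^{\infty}$ bound is immediate from $\sigma_{j}(0)=0$ and $\|\sigma_{j}(u)\|_{\infty}\le\|\sigma_{j}\|_{C^{1}}\|u\|_{\infty}$. The Besov estimate is more delicate because $\mathcal{B}_{p,\infty}^{1/p}$ is not an algebra; I would derive it via paralinearisation, writing $\sigma_{j}(u)=T_{\sigma_{j}'(u)}u+R_{j}(u)$ with a smoother remainder $R_{j}(u)$, estimating the paraproduct by Lemma~\ref{lem:paraproduct}(i) and the remainder through the Moser-type lemma that I would expect to be available in Appendix~\ref{sec:appendix}. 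This yields $\|\sigma_{j}(u)\|_{1/p,p,\infty}\lesssim\|\sigma_{j}\|_{C^{2}}(1+\|u\|_{\infty})\|u\|_{1/p,p,\infty}$, which is exactly the structure compatible with the smallness hypothesis on $\|\sigma_{j}\|_{C^{2}}$.

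For the contraction, writing $\sigma_{j}(u)-\sigma_{j}(v)=\int_{0}^{1}\sigma_{j}'(v+\tau(u-v))\d\tau\,(u-v)$, the same Bony/Young/composition chain applied to this difference reduces the problem to the same composition lemma with $\sigma_{j}$ replaced by $\sigma_{j}'$, producing a Lipschitz constant of order $\max_{j}\|\sigma_{j}\|_{C^{2}}(1+R)$ times the data norms. Choosing $\|\sigma_{j}\|_{C^{2}}$ sufficiently small depending on $\norm{u_{0}}\norm$, $\max_{j}\|\phi_{j}\|_{\gamma_{j},1,\infty}$ and $\max_{j}\|\xi_{j}\|_{\beta_{j}-1,p,\infty}$ makes $\Psi$ a strict contraction on $B_{R}$, and Banach's fixed point theorem produces the desired unique solution in $\mathcal{B}_{p,\infty}^{1/p}\cap L^{\infty}$.
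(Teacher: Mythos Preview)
Your proposal is correct and follows essentially the same fixed point argument as the paper: Bony decomposition of $\sigma_{j}(u)\xi_{j}$, the embedding $\mathcal{B}_{p/2,\infty}^{1/p+\beta_{j}-1}\hookrightarrow\mathcal{B}_{p,\infty}^{\beta_{j}-1}$, Young's inequality for the convolution, and then invariance and contraction on a ball in $\mathcal{B}_{p,\infty}^{1/p}\cap L^{\infty}$. The only place you overcomplicate things is the composition estimate: the paper does not paralinearise $\sigma_{j}(u)$ but simply invokes the Moser-type bound $\|F(f)\|_{1/p,p,\infty}\lesssim\|F\|_{C^{1}}\|f\|_{1/p,p,\infty}$ for $f\in\mathcal{B}_{p,\infty}^{1/p}\cap L^{\infty}$ and $F(0)=0$ (Lemma~\ref{lem:differenceBesovNorm}(i)), together with its Lipschitz version (iii) for the contraction step, so the $C^{2}$ norm enters only through the difference estimate and no extra factor $(1+\|u\|_{\infty})$ appears in the mapping bound.
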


Let us remark that Proposition~\ref{prop:YoungLip} is not a corollary
of Proposition~\ref{prop:YoungLip with jumps}. However, since the
corresponding proofs work analogously, we present here only the proof
of Proposition~\ref{prop:YoungLip with jumps} in order to avoid
redundance.
\begin{proof}[Proof of Proposition~\ref{prop:YoungLip with jumps}]
We study the solution map
\[
\Phi\colon\mathcal{B}_{p,\infty}^{\frac{1}{p}}\cap L^{\infty}\to\mathcal{B}_{p,\infty}^{\frac{1}{p}}\cap L^{\infty},\quad v\mapsto u:=u_{0}+\phi_{1}*\big(\sigma_{1}(v)\xi_{1}\big)+\phi_{2}*\big(\sigma_{2}(v)\xi_{2}\big).
\]
If $\Phi$ is a well-defined map and a contraction, then the assertion
follows from Banach's fixed point theorem.

\emph{Step 1:} \emph{The map $\Phi$ is well-defined.} Indeed, by
Young's inequality (Lemma~\ref{lem:Convolution}), the Besov embeddings
$B_{p/2,\infty}^{1/p+\beta_{j}-1}\subset B_{p,\infty}^{\beta_{j}-1}$,
$B_{p,\infty}^{\gamma_{j}+\beta_{j}-1}\subset B_{p,\infty}^{1/p}$
and $B_{p,\infty}^{\beta_{j}+\gamma_{j}-1}\subset L^{\infty}$ for
$j=1,2$ and Bony's decomposition we have
\begin{align*}
 & \|\Phi(v)\|_{\frac{1}{p},p,\infty}+\|\Phi(v)\|_{\infty}\\
 & \quad\lesssim\|u_{0}\|_{\frac{1}{p},p,\infty}+\|u_{0}\|_{\infty}+\sum_{j=1,2}\|\phi_{j}\|_{\gamma_{j},1,\infty}\|\sigma_{j}(v)\xi_{j}\|_{\beta_{j}-1,p,\infty}\\
 & \quad\lesssim\|u_{0}\|_{\frac{1}{p},p,\infty}+\|u_{0}\|_{\infty}+\sum_{j=1,2}\big(\|T_{\sigma_{j}(v)}\xi_{j}\|_{\beta_{j}-1,p,\infty}+\|\pi(\sigma_{j}(v),\xi_{j})\|_{\frac{1}{p}+\beta_{j}-1,p/2,\infty}\\
 & \qquad\quad\qquad\qquad\qquad\hspace{6em}+\|T_{\xi_{j}}\sigma_{j}(v)\|_{\frac{1}{p}+\beta_{j}-1,p/2,\infty}\big)\|\phi_{j}\|_{\gamma_{j},1,\infty}.
\end{align*}
The paraproduct estimates (Lemma~\ref{lem:paraproduct}) and Lemma~\ref{lem:differenceBesovNorm}
yield 
\begin{align}
\begin{split} & \|\Phi(v)\|_{\frac{1}{p},p,\infty}+\|\Phi(v)\|_{\infty}\\
 & \quad\lesssim\|u_{0}\|_{\frac{1}{p},p,\infty}+\|u_{0}\|_{\infty}+\sum_{j=1,2}\|\phi_{j}\|_{\gamma_{j},1,\infty}\big(\|\sigma_{j}(v)\|_{\frac{1}{p},p,\infty}+\|\sigma_{j}(v)\|_{\infty}\big)\|\xi_{j}\|_{\beta_{j}-1,p,\infty}\\
 & \quad\lesssim\|u_{0}\|_{\frac{1}{p},p,\infty}+\|u_{0}\|_{\infty}+\big(\|v\|_{\frac{1}{p},p,\infty}+\|v\|_{\infty}\big)\sum_{j=1,2}\|\phi_{j}\|_{\gamma_{j},1,\infty}\|\sigma_{j}\|_{C^{1}}\|\xi_{j}\|_{\beta_{j}-1,p,\infty}.
\end{split}
\label{eq:PhiBounded}
\end{align}
Hence, $\Phi(v)\in\mathcal{B}_{p,\infty}^{\frac{1}{p}}\cap L^{\infty}$
for every $v\in\mathcal{B}_{p,\infty}^{\frac{1}{p}}\cap L^{\infty}.$

\emph{Step 2: Invariance of $\Phi$}. We now verify that $\Phi$ maps
the ball 
\[
\mathcal{B}_{K}:=\left\{ v\in\mathcal{B}_{p,\infty}^{\frac{1}{p}}\cap L^{\infty}\::\:\|v\|_{\frac{1}{p},p,\infty}+\|v\|_{\infty}\leq2K^{2}\right\} \subset\mathcal{B}_{p,\infty}^{\frac{1}{p}}\cap L^{\infty}
\]
into itself for some suitable constant $K\in\mathbb{R}$. Due to (\ref{eq:PhiBounded}),
there exists some $K\ge1$ such that $\|u_{0}\|_{\frac{1}{p},p,\infty}+\|u_{0}\|_{\infty}\le K$
and 
\begin{align*}
 & \|\Phi(v)\|_{\frac{1}{p},p,\infty}+\|\Phi(v)\|_{\infty}\\
 & \quad\le K\bigg(\|u_{0}\|_{\frac{1}{p},p,\infty}+\|u_{0}\|_{\infty}+\big(\|v\|_{\frac{1}{p},p,\infty}+\|v\|_{\infty}\big)\sum_{j=1,2}\|\phi_{j}\|_{\gamma_{j},1,\infty}\|\sigma_{j}\|_{C^{1}}\|\xi_{j}\|_{\beta_{j}-1,p,\infty}\bigg).
\end{align*}
If $\max_{j=1,2}\|\sigma_{j}\|_{C^{1}}$ is sufficiently small such
that 
\[
\max_{j=1,2}\|\phi_{j}\|_{\gamma_{j},1,\infty}\|\sigma_{j}\|_{C^{1}}\|\xi_{j}\|_{\beta_{j}-1,p,\infty}\leq\frac{1}{4K},
\]
then for any $v\in\mathcal{B}_{K}$ we obtain $\|\Phi(v)\|_{\frac{1}{p},p,\infty}+\|\Phi(v)\|_{\infty}\leq K^{2}+K^{2}\leq2K^{2}.$

\emph{Step 3: $\Phi$ is a contraction.} To deduce the Lipschitz continuity
of $\Phi$ on $\mathcal{B}_{K}$, let $v_{1},v_{2}\in\mathcal{B}_{K}$.
By Young's inequality (Lemma~\ref{lem:Convolution}) and the auxiliary
Lemmas~\ref{lem:product} and \ref{lem:differenceBesovNorm} we deduce
\begin{align*}
 & \|\Phi(v_{1})-\Phi(v_{2})\|_{\frac{1}{p},p,\infty}+\|\Phi(v_{1})-\Phi(v_{2})\|_{\infty}\\
 & \quad\lesssim\sum_{j=1,2}\|\phi_{j}\|_{\gamma_{j},1,\infty}\big(\|\sigma_{j}(v_{1})-\sigma_{j}(v_{2})\|_{\frac{1}{p},p,\infty}+\|\sigma_{j}(v_{1})-\sigma_{j}(v_{2})\|_{\infty}\big)\|\xi_{j}\|{}_{\beta_{j}-1,p,\infty}\\
 & \quad\lesssim\Big(\sum_{j=1,2}\|\sigma_{j}\|_{C^{2}}\|\phi_{j}\|_{\gamma_{j},1,\infty}\|\xi_{j}\|{}_{\beta_{j}-1,p,\infty}\Big)\big(1+\|v_{1}\|_{\frac{1}{p},p,\infty}+\|v_{1}\|_{\infty}+\|v_{2}\|_{\frac{1}{p},p,\infty}+\|v_{2}\|_{\infty}\big)\\
 & \qquad\times\big(\|v_{1}-v_{2}\|_{\frac{1}{p},p,\infty}+\|v_{1}-v_{2}\|_{\infty}\big)\\
 & \quad\lesssim\max_{j=1,2}\|\sigma_{j}\|_{C^{2}}\Big(\sum_{j=1,2}\|\phi_{j}\|_{\gamma_{j},1,\infty}\|\xi_{j}\|{}_{\beta_{j}-1,p,\infty}\Big)(1+4K^{2})\big(\|v_{1}-v_{2}\|_{\frac{1}{p},p,\infty}+\|v_{1}-v_{2}\|_{\infty}\big).
\end{align*}
In conclusion, $\Phi\colon\mathcal{B}_{K}\to\mathcal{B}_{K}$ is Lipschitz
continuous and it is a contraction for sufficiently small $\max_{j=1,2}\|\sigma_{j}\|_{C^{2}}$
depending on $\|u_{0}\|_{\frac{1}{p},p,\infty}+\|u_{0}\|_{\infty}$,
$\max_{j=1,2}\|\phi_{j}\|_{\gamma_{j},1,\infty}$ and $\max_{j=1,2}\|\xi_{j}\|_{\beta_{j}-1,p,\infty}$. 
\end{proof}
\begin{rem}
One can bypass the flatness condition on the vector fields $\sigma_{1}$
and $\sigma_{2}$, that is, $\max_{j=1,2}\|\sigma_{j}\|_{C^{2}}$
was assumed to be sufficiently small, by assuming that the kernel
functions $\phi_{1},\phi_{2}$ as well as the driving signals $\xi_{1},\xi_{2}$
are supported on the positive real line, cf. Subsection~\ref{subsec:global solution}.
\end{rem}

\subsection{Rough driving signals\label{subsec:rough driving singals}}

The regularity assumptions on the driving signals proposed in Proposition~\ref{prop:YoungLip},
for obtaining a unique solution to the Volterra equation~(\ref{eq:convol}),
are usually too strong for applications in probability theory. Namely,
we have imposed the smoothness condition $\alpha+\beta_{1}>1$, which
means $\beta_{1}>\frac{2-\gamma_{1}}{2}$. For instance, for ordinary
differential equations we have $\gamma_{1}=1$ and thus $\beta_{1}>\frac{1}{2}$
excluding stochastic differential equations driven by the Brownian
motion. In the sequel we will generalise this condition for the first
convolution term in (\ref{eq:convol}) to $2\alpha+\beta_{1}>1$ being
equivalent to $\beta_{1}>\frac{3-2\gamma_{1}}{3}$. In the case $\gamma_{1}=1$
we then require $\beta_{1}>\frac{1}{3}$ which is in line with the
classical rough path theory with one iterated integral. This paves
the way for a wide range of applications of our results to, e.g.,
fractional Brownian motion, martingales and Lévy processes, see Section~\ref{sec:examples}.

As discussed before, under the weaker regularity condition $\beta_{1}>\frac{3-2\gamma_{1}}{3}$
one main difficulty is to give a rigorous meaning to the product $\sigma_{1}(u)\xi_{1}$,
cf. (\ref{eq:bonyDecompYoung}). To overcome this issue, we adapt
the paracontrolled approach introduced by \citet{Gubinelli2015}.
In order to profit from the smoothing effect of the convolution with
$\phi_{1}$, we choose a paracontrolled ansatz that reflects the convolution
structure of equation~(\ref{eq:convol}). \\

Abbreviating the regular terms by $u_{0,2}:=u_{0}+\phi_{2}\ast(\sigma_{2}(u)\xi_{2})$
and using Bony's decomposition~(\ref{eq:bony decomposition}), we
may write
\[
u=u_{0,2}+\phi_{1}\ast\big(T_{\sigma_{1}(u)}\xi_{1}+\pi(\sigma_{1}(u),\xi_{1})+T_{\xi_{1}}\sigma_{1}(u)\big).
\]
Since the term $\phi_{1}\ast T_{\sigma_{1}(u)}\xi_{1}$ is the least
regular one, we choose the ansatz:
\[
u=u_{0,2}+\phi_{1}\ast T_{\sigma_{1}(u)}\xi_{1}+u^{*}
\]
with remainder 
\begin{equation}
u^{*}:=\phi_{1}\ast\big(\pi(\sigma_{1}(u),\xi_{1})+T_{\xi_{1}}\sigma_{1}(u)\big),\label{eq:u star}
\end{equation}
which is of regularity $\alpha+\beta_{1}-1+\gamma_{1}=2\alpha$ assuming
everything is well-defined. However, this is a priori not true due
to the resonant term $\pi(\sigma_{1}(u),\xi_{1})$. To analyze this
term, we use a linearization of $\sigma_{1}(u)$ (see \citep[Proposition~4.1]{Promel2015})
and again the ansatz for $u$ to decompose the critical term $\pi(\sigma_{1}(u),\xi_{1})$
into
\begin{align}
\begin{split}\pi(\sigma_{1}(u),\xi_{1}) & =\sigma_{1}'(u)\pi(u,\xi_{1})+\Pi_{\sigma_{1}}(u,\xi_{1})\\
 & =\sigma_{1}'(u)\big(\pi(u_{0,2},\xi_{1})+\pi(\phi_{1}\ast T_{\sigma_{1}(u)}\xi_{1},\xi_{1})+\pi(u^{*},\xi_{1})\big)+\Pi_{\sigma_{1}}(u,\xi_{1}),
\end{split}
\label{eq:criticalTerm}
\end{align}
where 
\begin{equation}
\Pi_{\sigma_{1}}(u,\xi_{1}):=\pi(\sigma_{1}(u),\xi_{1})-\sigma_{1}'(u)\pi(u,\xi_{1})\in\mathcal{B}_{p/3,\infty}^{2\alpha+\beta_{1}-1}.\label{eq:Pi}
\end{equation}
At this point the resonant term $\pi(\phi_{1}\ast T_{\sigma_{1}(u)}\xi_{1},\xi_{1})$
is not yet well-defined. In order to continue our analysis, we need
to compare 
\[
\pi(\phi_{1}\ast T_{\sigma_{1}(u)}\xi_{1},\xi_{1})\quad\mbox{and}\quad\pi(\phi_{1}\ast\xi_{1},\xi_{1}),
\]
which is indeed possible thanks to the following lemma.
\begin{lem}
\label{lem:convolution paracontrolled} Suppose there exists a constant
$r\in\R$ such that for some $\gamma\ge0$
\[
\phi\in\mathcal{B}_{1,\infty}^{\gamma}\quad\mbox{and}\quad(\cdot-r)\phi\in\mathcal{B}_{1,\infty}^{\gamma+1}.
\]
If $f\in\mathcal{B}_{p_{1},\infty}^{\alpha}$ and $g\in\mathcal{B}_{p_{2},\infty}^{\beta}$
with $\alpha\in(0,1)$, $\beta\in\R$ and $p_{1},p_{2}\in[1,\infty]$
such that $\frac{1}{p}:=\frac{1}{p_{1}}+\frac{1}{p_{2}}\le1$, then
\begin{align*}
R_{\phi}(f,g) & :=\phi\ast T_{f}g-T_{f(\cdot-r)}(\phi\ast g)\in\mathcal{B}_{p,\infty}^{\alpha+\beta+\gamma}
\end{align*}
with 
\[
\big\| R_{\phi}(f,g)\big\|_{\alpha+\beta+\gamma,p,\infty}\,\lesssim\big(\|\phi\|_{\gamma,1,\infty}+\|(\cdot+r)\phi\|_{\gamma+1,1,\infty}\big)\|f\|_{\alpha,p_{1},\infty}\|g\|_{\beta,p_{2},\infty}.
\]
\end{lem}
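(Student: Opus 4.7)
The plan is to treat $R_\phi(f,g)$ as a commutator and let the convolution operator $\phi\ast$ ``pass through'' the paraproduct $T_f$ at the cost of a shifted argument plus a smoother residue. First I would expand via Bony's decomposition,
\[
R_\phi(f,g) = \sum_{j\ge -1} A_j,\qquad A_j(x) = \int \phi(x-y)\bigl[S_{j-1}f(y) - S_{j-1}f(x-r)\bigr]\Delta_j g(y)\d y,
\]
using that $\Delta_j$ commutes with $\phi\ast$. A Fourier support analysis of $S_{j-1}f$ (ball of radius $\lesssim 2^{j-2}$) and $\Delta_j g$ (annulus $\sim 2^j$) shows that each $A_j$ has spectrum in an annulus $\{|\xi|\sim 2^j\}$, whence $\|\Delta_k R_\phi(f,g)\|_{L^p}\lesssim\sum_{j\sim k}\|A_j\|_{L^p}$.

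The central idea is a first-order Taylor expansion with integral remainder,
\[
S_{j-1}f(y) - S_{j-1}f(x-r) = (y-x+r)\int_0^1 (S_{j-1}f)'\bigl((1-s)(x-r)+sy\bigr)\d s,
\]
which combines the factor $(y-x+r)$ with $\phi(x-y)$ into $-\tilde\phi(x-y)$, where $\tilde\phi(z) := (z-r)\phi(z)\in\mathcal{B}_{1,\infty}^{\gamma+1}$. This is precisely where the extra smoothness of $(\cdot-r)\phi$ is exploited. After the internal change of variable $u = y - x + r$ and a translation by $r$ in the outer variable, $A_j$ takes the form $-\int_0^1 \tilde J_s(x-r)\d s$ with
\[
\tilde J_s(v) = \int \Phi(u)(S_{j-1}f)'(v+su)\Delta_j g(v+u)\d u,\qquad \Phi(u) := \tilde\phi(r-u),
\]
and the reflection leaves the Besov norm invariant, $\|\Phi\|_{\gamma+1,1,\infty}=\|\tilde\phi\|_{\gamma+1,1,\infty}$.

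The main obstacle is an $s$-uniform bound $\|\tilde J_s\|_{L^p}\lesssim 2^{-j(\alpha+\beta+\gamma)}$; the endpoints $s=0$ and $s=1$ reduce to standard convolutions, but an intermediate $s$ prevents a direct appeal to Young's inequality. Computing the Fourier transform in $v$ yields
\[
\widehat{\tilde J_s}(\xi) = \frac{1}{2\pi}\int \widehat{(S_{j-1}f)'}(\eta)\,\widehat{\Delta_j g}(\xi-\eta)\,\hat\Phi\bigl((1-s)\eta - \xi\bigr)\d\eta,
\]
and because $|\eta|\lesssim 2^{j-2}$ while $|\xi-\eta|\sim 2^j$, the third argument $(1-s)\eta-\xi$ has modulus $\sim 2^j$ for every $s\in[0,1]$. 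Decomposing $\Phi = \sum_\ell \Delta_\ell \Phi$, this forces the frequency index $\ell$ of $\Phi$ to satisfy $\ell\sim j$, up to finitely many small-$j$ cases which are handled trivially by Young's inequality via $\|\phi\|_{\gamma,1,\infty}$.

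For each $\ell\sim j$, Minkowski in $u$ combined with Hölder's inequality gives
\[
\|\tilde J_s^\ell\|_{L^p}\le \|\Delta_\ell\Phi\|_{L^1}\,\|(S_{j-1}f)'\|_{L^{p_1}}\,\|\Delta_j g\|_{L^{p_2}}.
\]
Bernstein's inequality (this is where $\alpha<1$ enters) yields $\|(S_{j-1}f)'\|_{L^{p_1}}\lesssim 2^{j(1-\alpha)}\|f\|_{\alpha,p_1,\infty}$, while $\|\Delta_\ell\Phi\|_{L^1}\lesssim 2^{-\ell(\gamma+1)}\|\tilde\phi\|_{\gamma+1,1,\infty}$ and $\|\Delta_j g\|_{L^{p_2}}\lesssim 2^{-j\beta}\|g\|_{\beta,p_2,\infty}$; multiplying these and setting $\ell\sim j$ produces exactly $2^{-j(\alpha+\beta+\gamma)}\|\tilde\phi\|_{\gamma+1,1,\infty}\|f\|_{\alpha,p_1,\infty}\|g\|_{\beta,p_2,\infty}$. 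Summing over the finitely many admissible $\ell$ gives the same bound for $\|\tilde J_s\|_{L^p}$ uniformly in $s$; integrating in $s\in[0,1]$ and summing over $j\sim k$ then delivers the asserted $\mathcal{B}_{p,\infty}^{\alpha+\beta+\gamma}$ estimate on $R_\phi(f,g)$.
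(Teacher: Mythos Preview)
Your proof is correct and the overall architecture (write $R_\phi(f,g)=\sum_j A_j$, note the annular Fourier support of $A_j$, then Taylor expand to absorb the factor $(y-x+r)$ into the kernel) coincides with the paper's. Where you diverge is in the final estimate.

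The paper does not bound $\|A_j\|_{L^p}$ directly. Instead it estimates $\|\Delta_k R_j\|_{L^p}$ by splitting
\[
\Delta_k R_j = \big(\phi_k\ast(S_{j-1}f\,\Delta_j g) - S_{j-1}f(\cdot-r)(\phi_k\ast\Delta_j g)\big) - [\Delta_k,\,S_{j-1}f(\cdot-r)](\phi\ast\Delta_j g),
\]
with $\phi_k:=\Delta_k\phi$. The commutator is handled by an external lemma, and the remaining piece by the Taylor step together with the algebraic identity $(x-r)\phi_k(x)=-i(\mathcal F^{-1}[\rho_k']\ast\phi)(x)+\Delta_k((\cdot-r)\phi)(x)$, which yields $\|(x-r)\phi_k\|_{L^1}\lesssim 2^{-k(\gamma+1)}(\|\phi\|_{\gamma,1,\infty}+\|(\cdot-r)\phi\|_{\gamma+1,1,\infty})$. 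Your route bypasses both the commutator lemma and this identity: after the change of variables you compute $\widehat{\tilde J_s}$ and observe that the argument $(1-s)\eta-\xi=-(s\eta+(\xi-\eta))$ inherits the annular localisation $\sim 2^j$ from $\xi-\eta$ uniformly in $s$, so only finitely many blocks $\Delta_\ell\tilde\phi$ with $\ell\sim j$ survive; Minkowski and H\"older then finish. This is more self-contained and shows that, apart from finitely many low frequencies, the bound depends only on $\|(\cdot-r)\phi\|_{\gamma+1,1,\infty}$. The paper's approach, on the other hand, stays within standard paraproduct\,/\,commutator machinery and isolates the quantity $\|(x-r)\phi_k\|_{L^1}$, which is of independent use.
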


Applying Lemma~\ref{lem:convolution paracontrolled}, we can write
the ``undefined'' resonant term $\pi(\phi_{1}\ast T_{\sigma_{1}(u)}\xi_{1},\xi_{1})$
in (\ref{eq:criticalTerm}) as
\begin{align}
\begin{split} & \pi(\phi_{1}\ast T_{\sigma_{1}(u)}\xi_{1},\xi_{1})\\
 & \quad=\pi(T_{\sigma_{1}(u(\cdot-r_{1}))}(\phi_{1}\ast\xi_{1}),\xi_{1})+\pi(R_{\phi_{1}}(\sigma_{1}(u),\xi_{1}),\xi_{1})\\
 & \quad=\sigma_{1}(u(\cdot-r_{1}))\pi(\phi_{1}\ast\xi_{1},\xi_{1})+\Gamma(\sigma_{1}(u(\cdot-r_{1})),\phi_{1}\ast\xi_{1},\xi_{1})+\pi(R_{\phi_{1}}(\sigma_{1}(u),\xi_{1}),\xi_{1}),
\end{split}
\label{eq:ansatzCirtTerm}
\end{align}
for some $r_{1}\in\R$ and where we used the commutator
\[
\Gamma(f,g,h):=\pi(T_{f}g,h)-f\pi(g,h),\quad f,g,h\in\mathcal{S}^{\prime}(\R),
\]
 satisfying
\begin{equation}
\|\Gamma(f,g,h)\|_{a+b+c,p/3,q}\lesssim\|f\|_{a,p,\infty}\|g\|_{b,p,\infty}\|h\|_{c,p,\infty},\label{eq:commutator}
\end{equation}
for $p\ge3$, $a\in(0,1)$ and $b,c\in\R$ with $a+b+c>0$ and $b+c<0$,
see the so-called commutator lemma~\citep[Lemma~4.4]{Promel2015}.
We thus have reduced the critical term $\pi(\sigma_{1}(u),\xi_{1})$
to the resonant term $\pi(\phi_{1}\ast\xi_{1},\xi_{1})$. The latter
one does not depend on the particular equation (\ref{eq:convol})
in the sense that it neither depends on $u$ nor on $\sigma_{1}$,
but only on the signal $\xi_{1}$ and the convolution kernel $\phi_{1}$.
\begin{proof}[Proof of Lemma~\ref{lem:convolution paracontrolled}]
\noindent  We may assume that $f\in\mathcal{B}_{p_{1},\infty}^{\alpha+1/p_{1}+1}$
being a dense subset of $\mathcal{B}_{p_{1},\infty}^{\alpha}$ such
that the result will follow by continuity. We will use the notation
$S_{j}f:=\sum_{k<j-1}\Delta_{j}f$ for $f\in\mathcal{S}'(\R)$. Noting
that 
\begin{equation}
\Delta_{j}(\phi\ast g)=\F^{-1}\rho_{j}\ast\phi\ast g=\phi\ast(\Delta_{j}g),\label{eq:DeltaPhi}
\end{equation}
and since $\sum_{j}\Delta_{j}(\phi\ast g)$ converges if $\sum_{j}\Delta_{j}g$
converges by Lemma~\ref{lem:Convolution}, we have
\[
\phi\ast T_{f}g(x)-T_{f(\cdot-r)}(\phi\ast g)(x)=\sum_{j\geq-1}R_{j}(x)
\]
with
\begin{align}
\begin{split}R_{j}(x):= & \phi\ast\big(S_{j-1}f\Delta_{j}g\big)(x)-S_{j-1}f(x-r)(\phi\ast\Delta_{j}g)(x)\\
= & \int_{\R}\phi(x-z)\big(S_{j-1}f(z)-S_{j-1}f(x-r)\big)\Delta_{j}g(z)\d z\\
= & \int_{0}^{1}\int_{\R}(z-x+r)\phi(x-z)S_{j-1}f'(x-r+t(z-x+r))\Delta_{j}g(z)\d z\,\dd t
\end{split}
\label{eq:proofConvPhi}
\end{align}
where we apply Fubini's theorem in the last line using that $f'$
is bounded. Since $(y-r)\phi(y)\in L^{1}(\R)\subset\mathcal{B}_{1,\infty}^{1}$,
the Fourier transform of $R_{j}$ is well-defined and we have
\begin{align*}
\F R_{j}(\xi) & =\int_{0}^{1}\int_{\R^{2}}e^{i\xi(x-z)}(z-x+r)\phi(x-z)e^{i\xi z}S_{j-1}f'(x-r+t(z-x+r))\Delta_{j}g(z)\d z\,\dd x\,\dd t\\
 & =-\int_{0}^{1}\int_{\R^{2}}e^{i\xi x}(x-r)\phi(x)e^{i\xi z}S_{j-1}f'(x+z-r-tx+tr))\Delta_{j}g(z)\d z\,\dd x\,\dd t\\
 & =-\int_{0}^{1}\int_{\R}e^{i\xi x}(x-r)\phi(x)\F\big[S_{j-1}\big(f'(\cdot+(1-t)(x-r))\big)\Delta_{j}g\big](\xi)\d x\,\dd t.
\end{align*}
Since $\F\big[\Delta_{j}gS_{j-1}\big(f'(\cdot+(1-t)(x-r))\big)\big]$
is supported on an annulus with radius of order $2^{j}$ (uniformly
in $t$ and $x$), we conclude that $\Delta_{k}R_{j}=0$ if $|k-j|\gtrsim1$.
Consequently, 
\begin{equation}
\|\phi\ast T_{f}g(x)-T_{f(\cdot-r)}(\phi\ast g)(x)\|_{\alpha+\beta+\gamma,p,\infty}\lesssim\sup_{k\geq-1}2^{k(\alpha+\beta+\gamma)}\sum_{j\sim k}\|\Delta_{k}R_{j}\|_{L^{p}}.\label{eq:estimatebesov}
\end{equation}
Let us introduce $\phi_{k}:=\F^{-1}\rho_{k}\ast\phi$ and recall the
operator $[\Delta_{k},f]g:=\Delta_{k}(fg)-f\Delta_{k}g$. We have
\begin{align}
\begin{split}\Delta_{k}R_{j} & =\phi_{k}\ast\big(S_{j-1}f\Delta_{j}g\big)-\Delta_{k}\big(S_{j-1}f(\cdot-r)(\phi\ast\Delta_{j}g)\big)\\
 & =\phi_{k}\ast\big(S_{j-1}f\Delta_{j}g\big)-S_{j-1}f(\cdot-r)(\phi_{k}\ast\Delta_{j}g)-[\Delta_{k},S_{j-1}f(\cdot-r)](\phi\ast\Delta_{j}g).
\end{split}
\label{eq:DeltaR}
\end{align}
For the third term in the above display \citep[Lemma~4.3]{Promel2015},
(\ref{eq:DeltaPhi}) and Lemma~\ref{lem:Convolution} yield
\begin{align}
\begin{split}\sum_{j\sim k}\big\|[\Delta_{k},S_{j-1}f(\cdot-r)](\phi\ast\Delta_{j}g)\big\|_{L^{p}} & \leq\sum_{j\sim k}2^{-k\alpha}\|S_{j-1}f\|_{\alpha,p_{1},\infty}\|\Delta_{j}(\phi\ast g)\|_{L^{p_{2}}}\\
 & \lesssim\sum_{j\sim k}2^{-k\alpha}\|f\|_{\alpha,p_{1},\infty}2^{-j(\beta+\gamma)}\|\phi\ast g\|_{\beta+\gamma,p_{2},\infty}\\
 & \lesssim2^{-k(\alpha+\beta+\gamma)}\|\phi\|_{\gamma,1,\infty}\|f\|_{\alpha,p_{1},\infty}\|g\|_{\beta,p_{2},\infty}.
\end{split}
\label{eq:estimate1}
\end{align}
Exactly as in (\ref{eq:proofConvPhi}) the first two terms in (\ref{eq:DeltaR})
can be written as
\begin{align*}
 & \phi_{k}\ast\big(S_{j-1}f\Delta_{j}g\big)(x)-S_{j-1}f(x-r)(\phi_{k}\ast\Delta_{j}g)(x)\\
 & \qquad=\int_{0}^{1}\int_{\R}(z-x+r)\phi_{k}(x-z)\Delta_{j}g(z)S_{j-1}f'(x+t(z-x)+(t-1)r)\d z\,\dd t.
\end{align*}
Abbreviating $\tilde{\phi}_{k}(x)=(x-r)\phi_{k}(x)$, Hölder's inequality
gives
\begin{align*}
 & \big|\phi_{k}\ast\big(S_{j-1}f\Delta_{j}g\big)(x)-S_{j-1}f(x-r)(\phi_{k}\ast\Delta_{j}g)(x)\big|\\
 & \qquad\le\int_{0}^{1}\int_{\R}|\tilde{\phi}_{k}(x-z)|^{1-1/p}|\tilde{\phi}_{k}(x-z)|^{1/p}\big|S_{j-1}f'\big(x+t(z-x)+(t-1)r\big)\Delta_{j}g(z)\big|\d z\,\dd t\\
 & \qquad\le\|\tilde{\phi}_{k}\|_{L^{1}}^{1-1/p}\int_{0}^{1}\Big(\int_{\R}|\tilde{\phi}_{k}(x-z)|\big|S_{j-1}f'\big(x+t(z-x)+(t-1)r\big)\Delta_{j}g(z)\big|^{p}\d z\Big)^{1/p}\dd t.
\end{align*}
Using that $\alpha-1<0$ and \citep[Proposition~2.79]{Bahouri2011},
we obtain by 
\begin{align}
\begin{split} & \big\|\phi_{k}\ast\big(S_{j-1}f\Delta_{j}g\big)-S_{j-1}f(\phi_{k}\ast\Delta_{j}g)\big\|_{L^{p}}\\
 & \qquad\lesssim\|\tilde{\phi}_{k}\|_{L^{1}}^{1-1/p}\Big(\int_{0}^{1}\int_{\R^{2}}|\tilde{\phi}_{k}(x)|\big|\Delta_{j}g(z)S_{j-1}f'(x+z-tx+(t-1)r)\big|^{p}\d x\,\dd z\,\dd t\Big)^{1/p}\\
 & \qquad\le\|\tilde{\phi}_{k}\|_{L^{1}}^{1-1/p}\Big(\int_{0}^{1}\int_{\R}|\tilde{\phi}_{k}(x)|\|\Delta_{j}g\|_{L^{p_{2}}}^{p}\|S_{j-1}f'\|_{L^{p_{1}}}^{p}\d x\,\dd t\Big)^{1/p}\\
 & \qquad\lesssim2^{-j(\alpha+\beta+\gamma)}\|g\|_{\beta,p_{2},\infty}\|f'\|_{\alpha-1,p_{1},\infty}2^{j(\gamma+1)}\|\tilde{\phi}_{k}(x)\|_{L^{1}}.
\end{split}
\label{eq:estimate2}
\end{align}
Since $\|f'\|_{\alpha-1,p_{1},\infty}\lesssim\|f\|_{\alpha,p_{1},\infty}$,
it suffices to show $\|\tilde{\phi}_{k}\|_{L^{1}}=\|(x-r)\phi_{k}(x)\|_{L^{1}}\lesssim2^{-k(\gamma+1)}.$
Note that
\begin{align*}
(x-r)\phi_{k}(x) & =\int_{\R}(x-z+z-r)\F^{-1}\rho_{k}(x-z)\phi(z)\d z\\
 & =\int_{\R}(x-z)\F^{-1}\rho_{k}(x-z)\phi(z)\d z+\int_{\R}\F^{-1}\rho_{k}(x-z)(z-r)\phi(z)\d z\\
 & =\big((y\F^{-1}\rho_{k}(y))\ast\phi\big)(x)+\big(\F^{-1}\rho_{k}\ast((y-r)\phi(y)\big)(x)\\
 & =-i\big(\F^{-1}[\rho'_{k}]\ast\phi\big)(x)+\Delta_{k}\big((y-r)\phi(y)\big)(x)\\
 & =-i\sum_{j\sim k}\F^{-1}[\rho_{k}']\ast\Delta_{j}\phi(x)+\Delta_{k}\big((y-r)\phi(y)\big)(x),
\end{align*}
which by Young's inequality implies
\begin{align}
\begin{split}\|(x-r)\phi_{k}(x)\|_{L^{1}} & \le\sum_{j\sim k}\|\F^{-1}[\rho_{k}']\|_{L^{1}}\|\Delta_{j}\phi\|_{L^{1}}+\|\Delta_{k}\big((y-r)\phi(y)\big)\|_{L^{1}}\\
 & =\sum_{j\sim k}\|\F^{-1}[\rho'](2^{k}\cdot)\|_{L^{1}}\|\Delta_{j}\phi\|_{L^{1}}+\|\Delta_{k}\big((y-r)\phi(y)\big)\|_{L^{1}}\\
 & =2^{-k(\gamma+1)}\|\F^{-1}[\rho']\|_{L^{1}}\|\phi\|_{\gamma,1,\infty}+2^{-k(\gamma+1)}\|(\cdot-r)\phi\|_{\gamma+1,1,\infty}.
\end{split}
\label{eq:estimate3}
\end{align}
Finally, we combine the estimates~(\ref{eq:estimate1}), (\ref{eq:estimate2})
and (\ref{eq:estimate3}) to get 
\[
\sum_{j\sim k}\|\Delta_{k}R_{j}\|_{L^{p}}\lesssim2^{-k(\alpha+\beta+\gamma)}\big(\|\phi\|_{\gamma,1,\infty}+\|(\cdot-r)\phi\|_{\gamma+1,1,\infty}\big)\|f\|_{\alpha,p_{1},\infty}\|g\|_{\beta,p_{2},\infty}.
\]
In view of (\ref{eq:estimatebesov}) we have proven the asserted bound
for $\big\| R_{\phi}(f,g)\big\|_{\alpha+\beta+\gamma,p,\infty}$ and
in particular $R_{\phi}(f,g)\in\mathcal{B}_{p,\infty}^{\alpha+\beta+\gamma}$.
\end{proof}
\begin{rem}
Lemma~\ref{lem:convolution paracontrolled} can be seen as a counterpart
to the integration by parts formula as used in the context of classical
rough differential equations of the form $Du=F(u)\xi$ with the differential
operator $D$ and a signal $\xi\in\mathcal{B}_{p,\infty}^{\beta-1}$,
see for example \citep{Gubinelli2015,Promel2015}. Defining the integration
operator $I:=D^{-1}$ to be the inverse of $D$ and denoting by $\theta$
the solution of $D\theta=\xi$, one has 
\[
T_{F(u)}\theta=I(DT_{F(u)}\theta)=IT_{F(u)}\xi+IT_{DF(u)}\theta,
\]
where the second term is of regularity $2\alpha$. Heuristically speaking,
for Volterra equations we replace the integration operator $I\colon f\mapsto I(f)$
by the convolution operator $f\mapsto\phi\ast f$ and set $\theta:=\phi\ast\xi$.
\end{rem}

The resonant term $\pi(\phi_{1}\ast\xi_{1},\xi_{1})$ appearing in
(\ref{eq:ansatzCirtTerm}) turns out to be the necessary ``additional
information'' one needs to postulate in order to give a meaning to
the Volterra equation~(\ref{eq:convol}) with rough driving signals
$\xi_{1}$. It corresponds to the iterated integrals in rough path
theory (cf. \citep{Lyons1998,Lyons2007,Friz2014}) or the models in
Hairer's theory of regularity structures (cf. \citep{Hairer2014,Hairer2015}).
For the construction of $\pi(\phi_{1}\ast\xi_{1},\xi_{1})$ for certain
stochastic processes we refer to Section~\ref{sec:resonant term}. 

In the present context we introduce the notion of convolutional rough
paths. 
\begin{defn}
\label{def:convRoughPath}Let $\beta,\gamma>0$, $p\in[2,\infty]$
and set $\alpha:=\beta+\gamma-1$. The space of smooth functions $\xi\colon\R\to\R^{n}$
with compact support is denoted by $\mathcal{C}_{c}^{\infty}$. Given
a function $\phi\in\mathcal{B}_{1,\infty}^{\gamma}$, the closure
of the set 
\[
\big\{\big(\xi,\pi(\phi*\xi,\xi)\big)\,:\,\xi\in\mathcal{C}_{c}^{\infty}\big\}\subset\mathcal{B}_{p,\infty}^{\beta-1}\times\mathcal{B}_{p/2,\infty}^{\alpha+\beta-1}
\]
with respect to the norm $\|\xi\|_{\beta-1,p,\infty}+\|\pi(\phi*\xi,\xi)\|_{\alpha+\beta-1,p/2,\infty}$
is denoted by $\mathcal{\mathcal{B}}_{p}^{\beta,\gamma}(\phi)$ and
$(\xi,\mu)\in\mathcal{B}_{p}^{\beta,\gamma}(\phi)$ is called \textit{convolutional
rough path}. 
\end{defn}

Assuming $\pi(\phi_{1}\ast\xi_{1},\xi_{1})$ is well-defined, by the
previous analysis we know that $u^{*}$ from (\ref{eq:u star}) is
also well-defined. Hence, Bony's decomposition and Lemma~\ref{lem:convolution paracontrolled}
allow to rewrite the rough term $\phi_{1}\ast(\sigma_{1}(u)\xi_{1})$
as 
\begin{align*}
\phi_{1}\ast(\sigma_{1}(u)\xi_{1}) & =\phi_{1}*\big(T_{\sigma_{1}(u)}\xi_{1}+\pi(\sigma_{1}(u),\xi_{1})+T_{\xi_{1}}\sigma_{1}(u)\big)\\
 & =T_{\sigma_{1}(u(\cdot-r_{1}))}(\phi_{1}\ast\xi_{1})+\underbrace{u^{*}+R_{\phi_{1}}(\sigma_{1}(u),\xi_{1})}_{\in\mathcal{B}_{p/2,\infty}^{2\alpha}}.
\end{align*}
For the more regular drift term $\phi_{2}\ast(\sigma_{2}(u)\xi_{2})$
we observe (using similar calculations as in the Young setting and
Lemma~\ref{lem:convolution paracontrolled}) a control structure
with respect to $\phi_{2}\ast\xi_{2}$: 
\begin{align*}
\phi_{2}\ast(\sigma_{2}(u)\xi_{2}) & =\phi_{2}\ast T_{\sigma_{2}(u)}\xi_{2}+\phi_{2}\ast\big(\underbrace{\pi(\sigma_{2}(u),\xi_{2})+T_{\xi_{2}}(\sigma_{2}(u))}_{\in\mathcal{B}_{p/2,\infty}^{\alpha+\beta_{2}-1}}\big)\\
 & =T_{\sigma_{2}(u(\cdot-r_{2}))}(\phi_{2}\ast\xi_{2})+\underbrace{\phi_{2}\ast\big(\pi(\sigma_{2}(u),\xi_{2})+T_{\xi_{2}}(\sigma_{2}(u))\big)+R_{\phi_{2}}(\sigma_{2}(u),\xi_{2})}_{\in\mathcal{B}_{p/2,\infty}^{2\alpha}},
\end{align*}
for some $r_{2}\in\R$. Therefore, the ansatz for a solution $u$
to the Volterra equation~(\ref{eq:convol}) leads to the following
``paracontrolled'' structure:
\begin{defn}
Let $p\ge1$ and $\alpha>1/p$. A function $v\in\mathcal{B}_{p,\infty}^{\alpha}$
is called \emph{paracontrolled }by $w_{1},w_{2}\in\mathcal{B}_{p,\infty}^{\alpha}$
if there are $v^{(1)},v^{(2)}\in\mathcal{B}_{p,\infty}^{\alpha}$
such that $v^{\#}:=v-T_{v^{(1)}}w_{1}-T_{v^{(2)}}w_{2}\in\mathcal{B}_{p/2,\infty}^{2\alpha}$.
The space of all such triples $(v,v^{(1)},v^{(2)})\in(\mathcal{B}_{p,\infty}^{\alpha})^{3}$
where $v$ paracontrolled by $w_{1},w_{2}\in\mathcal{B}_{p,\infty}^{\alpha}$
is denoted by $\mathcal{D}_{p}^{\alpha}(w_{1},w_{2})$ equipped with
the norm
\[
\|v^{(1)}\|_{\alpha,p,\infty}+\|v^{(2)}\|_{\alpha,p,\infty}+\|v-T_{v^{(1)}}w_{1}-T_{v^{(2)}}w_{2}\|_{2\alpha,p/2,\infty}.
\]
\end{defn}

\begin{rem}
Note that for any $v^{(1)},v^{(2)},w_{1},w_{2}\in\mathcal{B}_{p,\infty}^{\alpha}$
and $v^{\#}\in\mathcal{B}_{p,\infty}^{2\alpha}$ the function $v:=T_{v^{(1)}}w_{1}-T_{v^{(2)}}w_{2}+v^{\#}$
is paracontrolled by $w_{1},w_{2}$ and, in particular, $v$ is an
element of $\mathcal{B}_{p,\infty}^{\alpha}$. Indeed, Lemma~\ref{lem:paraproduct}
and the embeddings $\mathcal{B}_{p/2,\infty}^{2\alpha}\subset\mathcal{B}_{p,\infty}^{\alpha}\subset L^{\infty}$
imply 
\begin{align*}
\|v\|_{\alpha,p,\infty} & \lesssim\sum_{j=1,2}\|T_{v^{(j)}}w_{j}\|_{\alpha,p,\infty}+\|v{}^{\#}\|_{\alpha,p,\infty}\lesssim\sum_{j=1,2}\|v^{(j)}\|_{\alpha,p,\infty}\|w_{j}\|_{\alpha,p,\infty}+\|v^{\#}\|_{2\alpha,p/2,\infty}.
\end{align*}
\end{rem}

It is natural to require the same paracontrolled structure for the
initial condition $u_{0}$ as for the solution for $u$. In other
words, $u_{0}$ is assumed to be of the form
\[
u_{0}=T_{u_{0}^{(1)}}(\phi_{1}\ast\xi_{1})+T_{u_{0}^{(2)}}(\phi_{2}\ast\xi_{2})+u_{0}^{\#}\quad\mbox{for some }u_{0}^{(1)},u_{0}^{(2)}\in\mathcal{B}_{p,\infty}^{\alpha},\,u_{0}^{\#}\in\mathcal{B}_{p/2,\infty}^{2\alpha}.
\]

\begin{rem}
A similar requirement for initial conditions $u_{0}$ appears in the
context of delay differential equations driven by rough paths where
$u_{0}$ is usually a path and not only a constant, cf. \citet[Theorem 1.1]{Neuenkirch2008}.
Hence, in order to ensure that the rough path integral is well-defined,
\citet{Neuenkirch2008} suppose the initial condition to be a controlled
path in the sense of \citet{Gubinelli2004}. 
\end{rem}

To sum up, the ansatz reads as
\begin{align}
u & =T_{u_{0}^{(1)}+\sigma_{1}(u(\cdot-r_{1}))}(\phi_{1}\ast\xi_{1})+T_{u_{0}^{(2)}+\sigma_{2}(u(\cdot-r_{2}))}(\phi_{2}\ast\xi_{2})+u^{\#}\label{eq:finalAnsatz}
\end{align}
with 
\begin{equation}
u^{\#}:=u_{0}^{\#}+\sum_{j=1,2}\big(\phi_{j}\ast\big(\pi(\sigma_{j}(u),\xi_{j})+T_{\xi_{j}}\sigma_{j}(u)\big)+R_{\phi_{j}}(\sigma_{j}(u),\xi_{j})\big)\in\mathcal{B}_{p/2,\infty}^{2\alpha}.\label{eq:u raute}
\end{equation}
Note that, imposing the Young type condition $\alpha+\beta_{2}>1$
on the regularity of the drift term $\phi_{2}*(\sigma_{2}(u)\xi_{2})$
ensures especially that the cross terms $\pi(\phi_{1}\ast\xi_{1},\xi_{2})$
and $\pi(\phi_{2}\ast\xi_{2},\xi_{1})$ are well-defined. \\

Postulating the paracontrolled structure for the initial condition,
we show in the following that the \emph{Itô-Lyons map} $\hat{S}:=\hat{S}_{\phi_{1},\phi_{2}}$
given by 
\begin{equation}
\begin{split}\hat{S}\colon(\mathcal{B}_{p,\infty}^{\alpha})^{2}\times\mathcal{B}_{p/2,\infty}^{2\alpha}\times\mathcal{B}_{p}^{\beta_{1},\gamma_{1}}(\phi_{1})\times\mathcal{B}_{p,\infty}^{\beta_{2}} & \to\mathcal{B}_{p,\infty}^{\alpha},\\
\big(u_{0}^{(1)},u_{0}^{(2)},u_{0}^{\#},(\xi_{1},\mu),\xi_{2}\big) & \mapsto u,
\end{split}
\label{eq:itomap}
\end{equation}
where $u$ is the solution to the Volterra equation~(\ref{eq:convol})
given the initial condition $u_{0}:=T_{u_{0}^{(1)}}(\phi_{1}\ast\xi_{1})+T_{u_{0}^{(2)}}(\phi_{2}\ast\xi_{2})+u_{0}^{\#}$
and the inputs $(\xi_{1},\mu)$, $\xi_{2}$, has indeed a unique locally
Lipschitz continuous extension from smooth driving signals $(\xi_{1},\pi(\phi_{1}*\xi_{1},\xi_{1}))$
to the space of convolutional rough paths~$(\xi_{1},\mu)$. For fixed
signals $((\xi_{1},\mu),\xi_{2})$ the ansatz from above and the proof
of the following theorem reveals that the Itô-Lyons maps, more precisely,
$\mathcal{D}_{p}^{\alpha}(\phi_{1}\ast\xi_{1},\phi_{2}\ast\xi_{2})$
into $\mathcal{D}_{p}^{\alpha}(\phi_{1}\ast\xi_{1},\phi_{2}\ast\xi_{2})$.
\begin{thm}
\label{thm:solve} Let $p\in[3,\infty]$, $0<\beta_{1}\le\beta_{2}\le1$
and $0<\gamma_{1}\le\gamma_{2}$ satisfy \textup{$\alpha:=\beta_{1}+\gamma_{1}-1\in(\frac{1}{3},1)$,
$2\alpha+\beta_{1}>1$ and} $\alpha+\beta_{2}>1$. For 

\begin{enumerate}
\item $\sigma_{1}\in C^{3}$ and $\sigma_{2}\in C^{2}$ with $\sigma_{1}(0)=\sigma_{2}(0)=0$,
\item $\phi_{j}\in\mathcal{B}_{1,\infty}^{\gamma_{j}}$ such that there
exists $r_{j}\in\R$ with $\|(\cdot-r_{j})\phi_{j}\|_{\gamma_{j}+1,1,\infty}<\infty,$
for $j=1,2$, 
\item $(\xi_{1},\mu)\in\mathcal{B}_{p}^{\beta_{1}-1,\gamma_{1}}(\phi_{1})$
and $\xi_{2}\in\mathcal{B}_{p,\infty}^{\beta_{2}-1}$,
\item $(u_{0}^{(1)},u_{0}^{(2)},u_{0}^{\#})\in(\mathcal{B}_{p,\infty}^{\alpha})^{2}\times\mathcal{B}_{p/2,\infty}^{2\alpha}$,
\end{enumerate}
the Volterra equation~(\ref{eq:convol}) with initial condition $u_{0}=T_{u_{0}^{(1)}}(\phi_{1}\ast\xi_{1})+T_{u_{0}^{(2)}}(\phi_{2}\ast\xi_{2})+u_{0}^{\#}$
has a unique solution if $\Delta:=\|\sigma_{1}\|_{C^{3}}\|\phi_{1}\|_{\gamma_{1},1,\infty}+\|\sigma_{2}\|_{C^{2}}\|\phi_{2}\|_{\gamma_{2},1,\infty}$
is sufficiently small depending on $((u_{0}^{(1)},u_{0}^{(2)},u_{0}^{\#}),(\xi_{1},\mu),\xi_{2})$
and $\phi_{1},\phi_{2}$. Moreover, the Itô-Lyons map $\hat{S}$ from
(\ref{eq:itomap}) is locally Lipschitz continuous around $((u_{0}^{(1)},u_{0}^{(2)},u_{0}^{\#}),(\xi_{1},\mu),\xi_{2})$.
\end{thm}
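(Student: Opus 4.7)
The plan is to recast \eqref{eq:convol} as a fixed-point equation on the affine slice of $\mathcal{D}_p^\alpha(\phi_1*\xi_1,\phi_2*\xi_2)$ consisting of triples $(v,v^{(1)},v^{(2)})$ whose paracontrolled derivatives are forced to be $v^{(1)}=u_0^{(1)}+\sigma_1(v(\cdot-r_1))$ and $v^{(2)}=u_0^{(2)}+\sigma_2(v(\cdot-r_2))$, with the remainder $v^{\#}$ prescribed by \eqref{eq:u raute}. On this set we define $\Psi(v)$ to be the right-hand side of \eqref{eq:convol} and verify in turn that (i) $\Psi$ preserves the paracontrolled class, (ii) $\Psi$ maps a suitable ball into itself and (iii) $\Psi$ is a contraction provided $\Delta$ is small. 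Banach's fixed point theorem then produces the unique solution $u$, and a parallel stability argument applied to two data sets delivers the local Lipschitz continuity of $\hat{S}$.

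The non-trivial part of the well-posedness of $\Psi$ is to show $\Psi(v)^{\#}\in\mathcal{B}_{p/2,\infty}^{2\alpha}$. For the drift term $j=2$ this follows directly from the Young-type condition $\alpha+\beta_2>1$ together with Bony's paraproduct estimates (Lemma~\ref{lem:paraproduct}), Young's inequality (Lemma~\ref{lem:Convolution}) and Lemma~\ref{lem:convolution paracontrolled}. The singular term $j=1$ requires the decomposition \eqref{eq:criticalTerm}--\eqref{eq:ansatzCirtTerm}: the linearization $\Pi_{\sigma_1}$ from \eqref{eq:Pi} absorbs the bad part of $\pi(\sigma_1(v),\xi_1)$ into a $(2\alpha+\beta_1-1)$-regular error, while Lemma~\ref{lem:convolution paracontrolled} converts $\pi(\phi_1*T_{\sigma_1(v)}\xi_1,\xi_1)$ into $\sigma_1(v(\cdot-r_1))\mu$ modulo a commutator $\Gamma$ controlled by \eqref{eq:commutator} and a remainder $\pi(R_{\phi_1}(\sigma_1(v),\xi_1),\xi_1)$. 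The condition $2\alpha+\beta_1>1$ is precisely what is needed so that all these ingredients combine to a $2\alpha$-regular distribution once convolved with $\phi_1$.

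Invariance and contraction are then obtained through bilinear/trilinear estimates of the same type used in the proof of Proposition~\ref{prop:YoungLip with jumps}, now applied term-by-term to the decomposition above and combined with Lemmas~\ref{lem:product} and \ref{lem:differenceBesovNorm} to bound the $\sigma_j$-dependent quantities. This produces estimates of the form
\[
\|\Psi(v)\|_{\mathcal{D}_p^\alpha}\lesssim 1+\Delta\,(1+\|v\|_{\mathcal{D}_p^\alpha})^N, \qquad \|\Psi(v_1)-\Psi(v_2)\|_{\mathcal{D}_p^\alpha}\lesssim\Delta\,(1+\|v_1\|_{\mathcal{D}_p^\alpha}+\|v_2\|_{\mathcal{D}_p^\alpha})^N\|v_1-v_2\|_{\mathcal{D}_p^\alpha},
\]
with multiplicative constants depending on the postulated data, from which invariance of a ball of radius $\sim 1$ and a contraction factor $<1$ follow as soon as $\Delta$ is sufficiently small. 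The main technical obstacle is the control of $\Pi_{\sigma_1}(v_1,\xi_1)-\Pi_{\sigma_1}(v_2,\xi_1)$: a second-order Taylor expansion of $\sigma_1$ together with a paralinearization argument in the spirit of \citep{Promel2015} explains the hypothesis $\sigma_1\in C^3$, whereas $\sigma_2\in C^2$ suffices in the less singular drift term.

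Finally, for the local Lipschitz continuity of $\hat S$, let $u,\tilde u$ denote the solutions associated with two data sets inside a common small neighbourhood, and let $\Psi,\tilde\Psi$ be the corresponding solution maps. Writing $u-\tilde u=(\Psi(u)-\Psi(\tilde u))+(\Psi(\tilde u)-\tilde\Psi(\tilde u))$, the first summand is contracted with factor $<1$ by the previous step, while the second is multilinear in the differences of the inputs $(u_0^{(1)},u_0^{(2)},u_0^\#,(\xi_1,\mu),\xi_2)$ and is estimated by exactly the same chain of Lemmas~\ref{lem:Convolution}, \ref{lem:paraproduct}, \ref{lem:convolution paracontrolled}, the commutator bound \eqref{eq:commutator}, and Lemmas~\ref{lem:product}--\ref{lem:differenceBesovNorm}. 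Absorbing $\|u-\tilde u\|_{\mathcal{D}_p^\alpha}$ into the left-hand side yields the desired Lipschitz estimate for $\hat S$, completing the proof.
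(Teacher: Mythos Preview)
Your approach is correct in spirit but differs structurally from the paper's. The paper does \emph{not} run a Banach fixed point on the paracontrolled space. Instead it first takes smooth $\xi_{1}\in\mathcal{C}_{c}^{\infty}$, where Proposition~\ref{prop:YoungLip} already furnishes a unique solution; it then establishes an a~priori bound (Proposition~\ref{prop:bound ito map}) and a Lipschitz estimate $\|u^{1}-u^{2}\|_{\alpha,p,\infty}\lesssim\|u_{0}^{\#,1}-u_{0}^{\#,2}\|+\dots+\|\pi(\phi_{1}\ast\xi_{1}^{1},\xi_{1}^{1})-\pi(\phi_{1}\ast\xi_{1}^{2},\xi_{1}^{2})\|$ by decomposing $\pi(\sigma_{1}(u^{i}),\xi_{1}^{i})$ into five terms $D_{1}^{k,i},\dots,D_{5}^{i}$ via Lemma~\ref{lem:linearization}, the commutator bound~\eqref{eq:commutator} and Lemma~\ref{lem:convolution paracontrolled}; finally it extends $\hat S$ to $\mathcal{B}_{p}^{\beta_{1},\gamma_{1}}(\phi_{1})$ by density, which is natural since the latter is \emph{defined} as a closure of smooth pairs. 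Your direct fixed-point route is the standard GIP alternative and uses exactly the same analytic ingredients; it is more self-contained but you then owe a one-line remark that, for smooth $\xi_{1}$, your paracontrolled product coincides with the classical one, so that your fixed point agrees with the Young solution and hence with the paper's extension of $\hat S$. Two points of care in your write-up: the set you call an ``affine slice'' is cut out by the \emph{nonlinear} constraint $v^{(j)}=u_{0}^{(j)}+\sigma_{j}(v(\cdot-r_{j}))$, so either work on the full Banach space of triples $(v^{(1)},v^{(2)},v^{\#})$ and let $\Psi$ output the new triple, or argue separately that this graph is closed; and in the stability step, $u$ and $\tilde u$ live in $\mathcal{D}_{p}^{\alpha}$-spaces modelled on \emph{different} reference paths $\phi_{j}\ast\xi_{j}$ versus $\phi_{j}\ast\tilde\xi_{j}$, so the difference should be measured via $\|u-\tilde u\|_{\alpha,p,\infty}$ and $\|u^{\#}-\tilde u^{\#}\|_{2\alpha,p/2,\infty}$ as the paper does, rather than a single $\mathcal{D}_{p}^{\alpha}$-norm.
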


\begin{rem}
\label{rem:jumps} Theorem~\ref{thm:solve} provides the local Lipschitz
continuity of the Itô-Lyons map $\hat{S}$ on the rough path space
$\mathcal{B}_{p}^{\beta_{1},\gamma_{1}}(\phi_{1})$, which contains
(convolutional) geometric rough paths with jumps. Indeed, considering
$\gamma_{1}>1$ and $p=3$, the parameter assumptions in Theorem~\ref{thm:solve}
only require 
\[
\beta_{1}>1-\frac{2}{3}\gamma_{1}\quad\mbox{and}\quad\beta_{1}>\frac{1}{3}+1-\gamma_{1}.
\]
Hence, we can choose $\beta_{1}<1/p$, which implies that $\mathcal{B}_{p}^{\beta_{1},\gamma_{1}}(\phi_{1})$
contains discontinuous paths, and Theorem~\ref{thm:solve} is still
applicable.

The existence of a continuous extension of the Itô-Lyons map from
the space of smooth paths to a space of geometric rough paths containing
discontinuous paths seems to be due to the smoothing property of the
kernel function~$\phi_{1}\in\mathcal{B}_{1,\infty}^{\gamma_{1}}$
for $\gamma_{1}>1$, cf. \citep[Remark~5.11]{Promel2015}. Note that
even if the driving rough path may possess jumps, the solution of
the Volterra equation is still a continuous functions as we require
$\alpha>1/3$. In order to obtain a continuous extension of the Itô-Lyons
map acting on smooth paths to discontinuous rough paths in the case
of classical rough differential equations (corresponding to $\gamma_{1}=1$)
requires to consider the rough paths enhanced with an additional information
given by the so-called path functionals, see the work of \citet{Chevyrev2017}.
\end{rem}

\subsection{Proof of Theorem~\ref{thm:solve}}

Most objects appearing the paracontrolled approach to the Volterra
equation~(\ref{eq:convol}) come only with local Lipschitz estimates
of their Besov norms. Therefore, as a first step towards a proof of
Theorem~\ref{thm:solve} we provide the a priori bounds for solutions
of the Volterra equation~(\ref{eq:convol}).
\begin{prop}
\label{prop:bound ito map} Let $p\in[3,\infty]$, $0<\beta_{1}\le\beta_{2}\le1$
and $0<\gamma_{1}\le\gamma_{2}$ satisfy \textup{$\alpha:=\beta_{1}+\gamma_{1}-1\in(\frac{1}{3},1)$,
$2\alpha+\beta_{1}>1$ and} $\alpha+\beta_{2}>1$. Suppose that 

\begin{enumerate}
\item $\sigma_{1}\in C^{2}$ and $\sigma_{2}\in C^{1}$ with $\sigma_{1}(0)=\sigma_{2}(0)=0$,
\item $\phi_{j}\in\mathcal{B}_{1,\infty}^{\gamma_{j}}$ such that there
exists $r_{j}\in\R$ with $\|(\cdot-r_{j})\phi_{j}\|_{\gamma_{j}+1,1,\infty}<\infty,$
for $j=1,2$, 
\item $\xi_{1}\in\mathcal{C}_{c}^{\infty}$ and $\xi_{2}\in\mathcal{B}_{p,\infty}^{\beta_{2}-1}$,
\item $(u_{0}^{(1)},u_{0}^{(2)},u_{0}^{\#})\in(\mathcal{B}_{p,\infty}^{\alpha})^{2}\times\mathcal{B}_{p/2,\infty}^{2\alpha}$.
\end{enumerate}
Let $u_{0}:=u_{0}^{\#}+T_{u_{0}^{(1)}}(\phi_{1}\ast\xi_{1})+T_{u_{0}^{(2)}}(\phi_{2}\ast\xi_{2})$
be the paracontrolled initial condition. Setting
\begin{align*}
 & \Delta:=\|\sigma_{1}\|_{C^{2}}\|\phi_{1}\|_{\gamma_{1},1,\infty}+\|\sigma_{2}\|_{C^{1}}\|\phi_{2}\|_{\gamma_{2},1,\infty}, &  & C_{\sigma}:=\|\sigma_{1}\|_{C^{1}}+\|\sigma_{2}\|_{C^{1}}+1,\\
 & C_{\phi}:=\sum_{j=1,2}\big(\|\phi_{j}\|_{\gamma_{j},1,\infty}+\|(\cdot-r_{j})\phi_{j}\|_{\gamma_{j}+1,1,\infty}\big)+1, &  & C_{u_{0}}:=\|u_{0}^{(1)}\|_{\alpha,p,\infty}+\|u_{0}^{(2)}\|_{\alpha,p,\infty}+1
\end{align*}
 and 
\[
C_{\xi}:=\|\pi(\phi_{1}\ast\xi_{1},\xi_{1})\|_{\alpha+\beta_{1}-1,p/2,\infty}+\sum_{j=1,2}\bigg(1+\sum_{k=1,2}\|\phi_{k}\|_{\gamma_{k},1,\infty}\|\xi_{k}\|_{\beta_{k}-1,p,\infty}\bigg)\|\xi_{j}\|_{\beta_{j}-1,p,\infty},
\]
there is a constant $c>0$ depending only on $\alpha$ and $p$ such
that, if $\Delta C_{\sigma}C_{\phi}C_{\xi}C_{u_{0}}\le c$, then 
\begin{align*}
\|u\|_{\alpha,p,\infty}\le & 2\|u_{0}\|_{\alpha,p,\infty}+\|u_{0}^{\#}\|_{2\alpha,p/2,\infty}+1\\
\lesssim & \sum_{j=1,2}\|\phi_{j}\|_{\gamma_{j},1,\infty}\|\xi_{j}\|_{\beta_{j}-1,p,\infty}\|u_{0}^{(j)}\|_{\alpha,p,\infty}+\|u_{0}^{\#}\|_{2\alpha,p/2,\infty}+1.
\end{align*}
\end{prop}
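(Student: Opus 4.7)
The plan is to combine the paracontrolled ansatz (\ref{eq:finalAnsatz})--(\ref{eq:u raute}) with the paraproduct and convolution estimates of Lemmas~\ref{lem:paraproduct}--\ref{lem:convolution paracontrolled}, to derive a self-consistent inequality for $\|u\|_{\alpha,p,\infty}$, and then to close it by a bootstrap argument exploiting the smallness of $\Delta$.

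\textbf{Step 1: set up the ansatz.} Since $u$ solves (\ref{eq:convol}) with the paracontrolled initial condition, using Bony's decomposition (\ref{eq:bony decomposition}) and Lemma~\ref{lem:convolution paracontrolled} as in the derivation preceding the statement, we may write
\[
u = T_{U_1}(\phi_1\ast\xi_1) + T_{U_2}(\phi_2\ast\xi_2) + u^{\#},\qquad U_j := u_0^{(j)} + \sigma_j(u(\cdot-r_j)),
\]
with $u^{\#}$ given by (\ref{eq:u raute}). By Lemma~\ref{lem:paraproduct}(i), Lemma~\ref{lem:Convolution} and the Besov embedding $\mathcal{B}_{p/2,\infty}^{2\alpha}\subset\mathcal{B}_{p,\infty}^{\alpha}$ (valid since $\alpha>1/p$), we get
\[
\|u\|_{\alpha,p,\infty} \lesssim \sum_{j=1,2}\|U_j\|_{\infty}\,\|\phi_j\|_{\gamma_j,1,\infty}\|\xi_j\|_{\beta_j-1,p,\infty} + \|u^{\#}\|_{2\alpha,p/2,\infty},
\]
and hence I only need to control $\|U_j\|_\infty$ and $\|u^{\#}\|_{2\alpha,p/2,\infty}$ in terms of $\|u\|_{\alpha,p,\infty}$ and the data.

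\textbf{Step 2: bound $u^{\#}$.} Insert the identity (\ref{eq:criticalTerm}) for $\pi(\sigma_1(u),\xi_1)$ into (\ref{eq:u raute}); expanding via (\ref{eq:ansatzCirtTerm}) rewrites the only truly critical contribution $\pi(\phi_1\ast T_{\sigma_1(u)}\xi_1,\xi_1)$ in terms of $\mu=\pi(\phi_1\ast\xi_1,\xi_1)$ plus a commutator $\Gamma$ and a remainder $\pi(R_{\phi_1}(\sigma_1(u),\xi_1),\xi_1)$. The resulting pieces are then estimated individually:
\begin{itemize}
\item $\|\Pi_{\sigma_1}(u,\xi_1)\|_{2\alpha+\beta_1-1,p/3,\infty}$ by (\ref{eq:Pi}),
\item $\pi(\phi_2\ast\xi_2,\xi_1)$ and the analogous drift resonant term are Young products controlled by Lemma~\ref{lem:paraproduct}(iii) thanks to $\alpha+\beta_2>1$,
\item the convolution commutator $R_{\phi_j}(\sigma_j(u),\xi_j)$ by Lemma~\ref{lem:convolution paracontrolled},
\item the paraproducts $T_{\xi_j}\sigma_j(u)$ and $\phi_j\ast(\cdot)$ by Lemmas~\ref{lem:paraproduct} and~\ref{lem:Convolution},
\item the three-fold commutator $\Gamma$ by (\ref{eq:commutator}).
\end{itemize}
Each factor involving $\sigma_j(u)$ is then translated back to $u$ via the auxiliary product/composition Lemmas~\ref{lem:product} and~\ref{lem:differenceBesovNorm}, which yield $\|\sigma_j(u)\|_{\alpha,p,\infty}+\|\sigma_j(u)\|_\infty \lesssim \|\sigma_j\|_{C^1}(1+\|u\|_{\alpha,p,\infty})$. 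Gathering terms gives an estimate of the schematic form
\[
\|u^{\#}\|_{2\alpha,p/2,\infty} \le \|u_0^{\#}\|_{2\alpha,p/2,\infty} + \Delta\, C_\sigma C_\phi C_\xi \bigl(1+\|u\|_{\alpha,p,\infty}\bigr)^{N}
\]
for a fixed integer $N$ (coming from the polynomial dependence on $u$ in $\Pi_{\sigma_1}$, in $\sigma_j(u)$, and in the cross terms). Analogously $\|U_j\|_\infty \lesssim \|u_0^{(j)}\|_{\alpha,p,\infty}\|\phi_j\|_{\gamma_j,1,\infty}\|\xi_j\|_{\beta_j-1,p,\infty} + \|\sigma_j\|_{C^1}(1+\|u\|_{\alpha,p,\infty})$.

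\textbf{Step 3: close by bootstrap.} Combining the two displays of Steps~1 and~2 gives an inequality
\[
\|u\|_{\alpha,p,\infty} \le A + B\bigl(1+\|u\|_{\alpha,p,\infty}\bigr)^{N},
\]
with $A\lesssim \|u_0\|_{\alpha,p,\infty}+\|u_0^{\#}\|_{2\alpha,p/2,\infty}$ and $B\lesssim \Delta\, C_\sigma C_\phi C_\xi C_{u_0}$. Setting $M := 2\|u_0\|_{\alpha,p,\infty}+\|u_0^{\#}\|_{2\alpha,p/2,\infty}+1$, one checks that if $\Delta C_\sigma C_\phi C_\xi C_{u_0}\le c$ for a sufficiently small constant $c=c(\alpha,p)$, then the function $x\mapsto A+B(1+x)^N - x$ is negative at $x=M$. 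A standard bootstrap (e.g.\ combined with the a posteriori Besov regularity of $u$ and continuity of the norm under mollification of $\xi_1\in\mathcal{C}_c^\infty$) then forces $\|u\|_{\alpha,p,\infty}\le M$, which is precisely the claimed bound.

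The main obstacle will be the bookkeeping in Step~2: one must carefully track which terms contribute powers of $\|u\|_{\alpha,p,\infty}$ (they come multiplied by $\Delta$ and hence can be absorbed) versus which terms are purely data-driven (and thus contribute to $A$). In particular, the crucial cancellation making the argument work is that every occurrence of $\pi(\phi_1\ast\xi_1,\xi_1)$ appears with a prefactor involving $\sigma_1(u)$, so after applying Lemma~\ref{lem:product} the dependence on $u$ is multiplied by $\|\sigma_1\|_{C^2}\|\phi_1\|_{\gamma_1,1,\infty}$, which is part of $\Delta$.
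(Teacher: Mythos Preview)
Your overall architecture is right, but Step~2 contains a genuine gap that prevents you from reaching the conclusion as stated. You propose to control $\pi(\sigma_1(u),\xi_1)$ through the decomposition (\ref{eq:criticalTerm}) and to bound the remainder $\Pi_{\sigma_1}(u,\xi_1)$ by (\ref{eq:Pi}). The estimate behind (\ref{eq:Pi}) (see \citep[Proposition~4.1]{Promel2015}) is \emph{quadratic} in $\|u\|_{\alpha,p,\infty}$, which is precisely what forces your exponent $N>1$ in Step~3. Once $N>1$, the condition $A+B(1+M)^N\le M$ requires $B\lesssim(1+M)^{-(N-1)}$, i.e.\ the smallness threshold has to depend on $M=2\|u_0\|_{\alpha,p,\infty}+\|u_0^{\#}\|_{2\alpha,p/2,\infty}+1$. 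But $\|u_0^{\#}\|_{2\alpha,p/2,\infty}$ does \emph{not} appear in the product $\Delta C_\sigma C_\phi C_\xi C_{u_0}$, so your argument can only deliver a constant $c$ depending on $u_0^{\#}$, contradicting the proposition, which demands $c=c(\alpha,p)$. The bootstrap in Step~3 is also not justified: you have a single fixed solution $u$ and no continuous family along which $\|u\|_{\alpha,p,\infty}$ starts below $M$; mollifying $\xi_1\in\mathcal C_c^\infty$ does not by itself provide such a path.

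The paper avoids both problems by replacing (\ref{eq:Pi}) with the linearization Lemma~\ref{lem:linearization}, which writes $\sigma_1(u)=\sigma_1(0)+T_{\sigma_1'(u)}u+S_{\sigma_1}(u)$ with $\|S_{\sigma_1}(u)\|_{2\alpha,p/2,\infty}$ bounded \emph{linearly} in $\|u\|_{\alpha,p,\infty}+\|u^{\#}\|_{2\alpha,p/2,\infty}$. After inserting the ansatz into $\pi(T_{\sigma_1'(u)}u,\xi_1)$ and applying the commutator estimate (\ref{eq:commutator}) twice, every contribution to $\|\pi(\sigma_1(u),\xi_1)\|$ is linear in $\|u\|_{\alpha,p,\infty}+\|u^{\#}\|_{2\alpha,p/2,\infty}+1$, with a prefactor $\lesssim\Delta C_\sigma C_\xi C_{u_0}$. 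One thus obtains a genuinely linear system
\[
\|u\|_{\alpha,p,\infty}\lesssim\|u_0\|_{\alpha,p,\infty}+\Delta C_\sigma C_\xi C_{u_0}\bigl(\|u\|_{\alpha,p,\infty}+\|u^{\#}\|_{2\alpha,p/2,\infty}+1\bigr),
\]
and similarly for $\|u^{\#}\|_{2\alpha,p/2,\infty}$ (with the extra factor $C_\phi$ from Lemma~\ref{lem:convolution paracontrolled}). These close by direct absorption once $\Delta C_\sigma C_\phi C_\xi C_{u_0}\le c(\alpha,p)$, with no continuity argument needed. The fix to your write-up is therefore localized: in Step~2, replace the appeal to (\ref{eq:Pi}) by Lemma~\ref{lem:linearization}, and then Step~3 becomes a two-line linear absorption rather than a bootstrap.
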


\begin{proof}
Using 
\[
u=u_{0}+\sum_{j=1,2}\phi_{j}\ast\big(T_{\sigma_{j}(u)}\xi_{j}+\pi(\sigma_{j}(u),\xi_{j})+T_{\xi_{j}}\sigma_{j}(u)\big),
\]
$\alpha\le\beta_{j}+\gamma_{j}-1$, the generalized Young inequality
(Lemma~\ref{lem:Convolution}) and Besov embeddings (as $\alpha>1/p$),
we have
\begin{align*}
\|u\|_{\alpha,p,\infty} & \lesssim\|u_{0}\|_{\alpha,p,\infty}+\sum_{j=1,2}\|\phi_{j}\|_{\gamma_{j},1,\infty}\big(\|T_{\sigma_{j}(u)}\xi_{j}\|_{\beta_{j}-1,p,\infty}\\
 & \qquad\hspace{6em}\quad+\|\pi(\sigma_{j}(u),\xi_{j})\|_{\alpha+\beta_{j}-1,p/2,\infty}+\|T_{\xi_{j}}\sigma_{j}(u)\|_{\beta_{j}-1,p,\infty}\big).
\end{align*}
By the paraproduct estimates (Lemma~\ref{lem:paraproduct}) and Lemma~\ref{lem:differenceBesovNorm}
we obtain for $j=1,2$
\begin{align*}
\|T_{\sigma_{j}(u)}\xi_{j}\|_{\beta_{j}-1,p,\infty} & \lesssim\|\sigma_{j}(u)\|_{\infty}\|\xi_{j}\|_{\beta_{j}-1,p,\infty}\lesssim\|\sigma_{j}\|_{\infty}\|\xi_{j}\|_{\beta_{j}-1,p,\infty}
\end{align*}
and
\begin{align}
\|T_{\xi_{j}}\sigma_{j}(u)\|_{\beta_{j}-1,p,\infty} & \lesssim\|\xi_{j}\|_{\beta_{j}-1,p,\infty}\|\sigma_{j}(u)\|_{\alpha,p,\infty}\lesssim\|\xi_{j}\|_{\beta_{j}-1,p,\infty}\|\sigma_{j}\|_{C^{1}}\|u\|_{\alpha,p,\infty}.\label{eq:estimate T_xi}
\end{align}

We now need to bound the resonant terms $\|\pi(\sigma_{j}(u),\xi_{j})\|_{\alpha+\beta_{j}-1,p/2,\infty}$
for $j=1,2$. For $j=2$ we apply again the paraproduct estimates
(Lemma~\ref{lem:paraproduct}) and Lemma~\ref{lem:differenceBesovNorm}
to get
\[
\|\pi(\sigma_{2}(u),\xi_{2})\|_{\alpha+\beta_{2}-1,p/2,\infty}\lesssim\|\sigma_{2}(u)\|_{\alpha,p,\infty}\|\xi_{2}\|_{\beta_{2}-1,p,\infty}\lesssim\|\sigma_{2}\|_{C^{1}}\|u\|_{\alpha,p,\infty}\|\xi_{2}\|_{\beta_{2}-1,p,\infty}
\]
using the assumption $\alpha+\beta_{2}-1>0$.

For $j=1$, in order to avoid a quadratic bound of $\Pi_{\sigma}(u,\xi)$
(cf.~(\ref{eq:Pi}) and \citep[Proposition~4.1]{Promel2015}), we
apply the linearization from Lemma~\ref{lem:linearization}, which
provides a function $S_{\sigma_{1}}(u)\in\mathcal{B}_{p/2,\infty}^{2\alpha}$
such that
\[
\pi(\sigma_{1}(u),\xi_{1})=\pi(T_{\sigma_{1}'(u)}u,\xi_{1})+\pi(S_{\sigma_{1}}(u),\xi_{1}).
\]
Writing the ansatz~(\ref{eq:finalAnsatz}) as 
\begin{equation}
u=\sum_{k=1,2}T_{\tilde{u}_{k}}(\phi_{k}\ast\xi_{k})+u^{\#}\quad\mbox{with}\quad\tilde{u}_{k}:=u_{0}^{(k)}+\sigma_{k}(u(\cdot-r_{k})),\quad k=1,2,\label{eq:ansatz rewritten}
\end{equation}
and in combination with the commutator estimate~(\ref{eq:commutator}),
we find that
\begin{align*}
\pi(\sigma_{1}(u),\xi_{1}) & =\sum_{k=1,2}\pi\big(T_{\sigma_{1}'(u)}\big(T_{\tilde{u}_{k}}(\phi_{k}\ast\xi_{k})\big),\xi_{1}\big)+\pi(T_{\sigma_{1}'(u)}u^{\#},\xi_{1})+\pi(S_{\sigma_{1}}(u),\xi_{j})\\
 & =\sum_{k=1,2}\Big(\sigma_{1}'(u)\pi(T_{\tilde{u}_{k}}(\phi_{k}\ast\xi_{k}),\xi_{1})+\Gamma(\sigma_{1}'(u),T_{\tilde{u}_{k}}(\phi_{k}\ast\xi_{k}),\xi_{1})\Big)\\
 & \qquad\qquad+\pi(T_{\sigma_{1}'(u)}u^{\#},\xi_{1})+\pi(S_{\sigma_{1}}(u),\xi_{1})\\
 & =\sum_{k=1,2}\Big(\sigma_{1}'(u)\tilde{u}_{k}\pi(\phi_{k}\ast\xi_{k},\xi_{1})+\sigma_{1}'(u)\Gamma(\tilde{u}_{k},\phi_{k}\ast\xi_{k},\xi_{1})\\
 & \qquad\qquad+\Gamma(\sigma_{1}'(u),T_{\tilde{u}_{k}}(\phi_{k}\ast\xi_{k}),\xi_{1})\Big)+\pi(T_{\sigma_{1}'(u)}u^{\#},\xi_{1})+\pi(S_{\sigma_{1}}(u),\xi_{1}).
\end{align*}
In the following we estimate these five terms, with $k=1,2$, frequently
using Besov embeddings ($\alpha>1/p$), the paraproduct estimates
(Lemma~\ref{lem:paraproduct}) and the auxiliary Besov estimates
(Lemma~\ref{lem:shiftBesovNorm}, \ref{lem:product} and \ref{lem:differenceBesovNorm}).

Defining $F(x,y):=\sigma_{1}'(x)\sigma_{k}(y)$ and owing to $2\alpha+\beta_{1}>1$,
we have 
\begin{align*}
 & \|\sigma_{1}'(u)\tilde{u}_{k}\pi(\phi_{k}\ast\xi_{k},\xi_{1})\|_{\alpha+\beta_{1}-1,p/2,\infty}\\
 & \quad\lesssim\|\sigma_{1}^{\prime}(u)\tilde{u}_{k}\|_{\alpha,p,\infty}\|\pi(\phi_{k}\ast\xi_{k},\xi_{1})\|_{\alpha+\beta_{1}-1,p/2,\infty}\\
 & \quad\lesssim\big(\|\sigma_{1}'(u)\|_{\alpha,p,\infty}\|u_{0}^{(k)}\|_{\alpha,p,\infty}+\|\sigma_{1}'(u)\sigma_{k}(u(\cdot-r_{k}))\|_{\alpha,p,\infty}\big)\|\pi(\phi_{k}\ast\xi_{k},\xi_{1})\|_{\alpha+\beta_{1}-1,p/2,\infty}\\
 & \quad\lesssim\big(\|\sigma_{1}'\|_{C^{1}}\|u\|_{\alpha,p,\infty}\|u_{0}^{(k)}\|_{\alpha,p,\infty}+\|F(u,u(\cdot-r_{k}))\|_{\alpha,p,\infty}\big)\|\pi(\phi_{k}\ast\xi_{k},\xi_{1})\|_{\alpha+\beta_{1}-1,p/2,\infty}\\
 & \quad\lesssim\big(\|\sigma_{1}'\|_{C^{1}}\|u\|_{\alpha,p,\infty}\|u_{0}^{(k)}\|_{\alpha,p,\infty}+\|\sigma_{k}\|_{C^{1}}\|\sigma_{1}\|_{C^{2}}(\|u\|_{\alpha,p,\infty}+\|u(\cdot-r_{k}))\|_{\alpha,p,\infty})\big)\\
 & \qquad\times\|\pi(\phi_{k}\ast\xi_{k},\xi_{1})\|_{\alpha+\beta_{1}-1,p/2,\infty}\\
 & \quad\lesssim\|\sigma_{1}\|_{C^{2}}\big(\|u_{0}^{(k)}\|_{\alpha,p,\infty}+\|\sigma_{k}\|_{C^{1}}\big)\|\pi(\phi_{k}\ast\xi_{k},\xi_{1})\|_{\alpha+\beta_{1}-1,p/2,\infty}\|u\|_{\alpha,p,\infty}.
\end{align*}
Applying the commutator estimate~(\ref{eq:commutator}) and Young's
inequality (Lemma~\ref{lem:Convolution}), we obtain 
\begin{align*}
 & \|\sigma_{1}'(u)\Gamma(\tilde{u}_{k},\phi_{k}\ast\xi_{k},\xi_{1})\|_{\alpha+\beta_{1}-1,p/2,\infty}\\
 & \quad\lesssim\|\sigma_{1}'(u)\|_{\infty}\|\Gamma(\tilde{u}_{k},\phi_{k}\ast\xi_{k},\xi_{1})\|_{2\alpha+\beta_{1}-1,p/3,\infty}\\
 & \quad\lesssim\|\sigma_{1}^{\prime}\|_{\infty}\|u_{0}^{(k)}+\sigma_{k}(u(\cdot-r_{k}))\|_{\alpha,p,\infty}\|\phi_{k}\ast\xi_{k}\|_{\alpha,p,\infty}\|\xi_{1}\|_{\beta_{1}-1,p,\infty}\\
 & \quad\lesssim\|\sigma_{1}\|_{C^{1}}\big(\|u_{0}^{(k)}\|_{\alpha,p,\infty}+\|\sigma_{k}\|_{C^{1}}\|u\|_{\alpha,p,\infty}\big)\|\phi_{k}\|_{\gamma_{k},1,\infty}\|\xi_{k}\|_{\beta_{k}-1,p,\infty}\|\xi_{1}\|_{\beta_{1}-1,p,\infty}
\end{align*}
and similarly
\begin{align*}
 & \|\Gamma(\sigma_{1}'(u),T_{\tilde{u}_{k}}(\phi_{k}\ast\xi_{k}),\xi_{1})\|_{\alpha+\beta_{1}-1,p/2,\infty}\\
 & \quad\lesssim\|\sigma_{1}'(u)\|_{\alpha,p,\infty}\|T_{\tilde{u}_{k}}(\phi_{k}\ast\xi_{k})\|_{\alpha,p,\infty}\|\xi_{1}\|_{\beta_{1}-1,p,\infty}\\
 & \quad\lesssim\|\sigma_{1}\|_{C^{2}}\|u\|_{\alpha,p,\infty}\big(\|u_{0}^{(k)}\|_{\infty}+\|\sigma_{k}(u(\cdot-r_{k}))\|_{\infty}\big)\|\phi_{k}\ast\xi_{k}\|_{\alpha,p,\infty}\|\xi_{1}\|_{\beta_{1}-1,p,\infty}\\
 & \quad\lesssim\|\sigma_{1}\|_{C^{2}}\big(\|u_{0}^{(k)}\|_{\alpha,p,\infty}+\|\sigma_{k}\|_{\infty}\big)\|\phi_{k}\|_{\gamma_{k},1,\infty}\|u\|_{\alpha,p,\infty}\|\xi_{k}\|_{\beta_{k}-1,p,\infty}\|\xi_{1}\|_{\beta_{1}-1,p,\infty}.
\end{align*}
From Bony's estimates (Lemma~\ref{lem:paraproduct}) we deduce that
\begin{align*}
 & \|\pi(T_{\sigma_{1}'(u)}u^{\#},\xi_{1})\|_{\alpha+\beta_{1}-1,p/2,\infty}\\
 & \quad\lesssim\|T_{\sigma_{1}'(u)}u^{\#}\|_{2\alpha,p/2,\infty}\|\xi_{1}\|_{\beta_{1}-1,p,\infty}\lesssim\|\sigma_{1}'\|_{\infty}\|u^{\#}\|_{2\alpha,p/2,\infty}\|\xi_{1}\|_{\beta_{1}-1,p,\infty}.
\end{align*}
Finally, Lemma~\ref{lem:linearization} shows
\begin{align*}
 & \|\pi(S_{\sigma_{1}}(u),\xi_{1})\|_{\alpha+\beta_{1}-1,p/2,\infty}\\
 & \quad\lesssim\|S_{\sigma_{1}}(u)\|_{2\alpha,p/2,\infty}\|\xi_{1}\|_{\beta_{1}-1,p,\infty}\\
 & \quad\lesssim\|\sigma_{1}\|_{C^{2}}\|\xi_{1}\|_{\beta_{1}-1,p,\infty}\bigg(1+\sum_{k=1,2}\|\tilde{u}_{k}\|_{\infty}\|\phi_{k}\ast\xi_{k}\|_{\alpha,p,\infty}\bigg)\big(\|u\|_{\alpha,p,\infty}+\|u^{\#}\|_{2\alpha,p/2,\infty}\big)\\
 & \quad\lesssim\|\sigma_{1}\|_{C^{2}}\|\xi_{1}\|_{\beta_{1}-1,p,\infty}\bigg(1+\sum_{k=1,2}\big(\|u_{0}^{(k)}\|_{\infty}+\|\sigma_{k}(u(\cdot-r_{k}))\|_{\infty}\big)\|\phi_{k}\|_{\gamma_{k},1,\infty}\|\xi_{k}\|_{\beta_{k}-1,p,\infty}\bigg)\\
 & \qquad\times\big(\|u\|_{\alpha,p,\infty}+\|u^{\#}\|_{2\alpha,p/2,\infty}\big)\\
 & \quad\lesssim\|\sigma_{1}\|_{C^{2}}\|\xi_{1}\|_{\beta_{1}-1,p,\infty}\bigg(1+\sum_{k=1,2}\big(\|u_{0}^{(k)}\|_{\infty}+\|\sigma_{k}\|_{\infty}\big)\|\phi_{k}\|_{\gamma_{k},1,\infty}\|\xi_{k}\|_{\beta_{k}-1,p,\infty}\bigg)\\
 & \qquad\times\big(\|u\|_{\alpha,p,\infty}+\|u^{\#}\|_{2\alpha,p/2,\infty}\big).
\end{align*}
Summarizing, we have
\begin{align*}
 & \|\pi(\sigma_{1}(u),\xi_{1})\|_{\alpha+\beta_{1}-1,p/2,\infty}\\
 & \quad\lesssim\|\sigma_{1}\|_{C^{2}}(\|\sigma_{1}\|_{C^{1}}+\|\sigma_{2}\|_{C^{1}}+1)\\
 & \qquad\times\Big(\|\xi_{1}\|_{\beta_{1}-1,p,\infty}+\sum_{k=1,2}\big(\|\phi_{k}\|_{\gamma_{k},1,\infty}\|\xi_{k}\|_{\beta_{k}-1,p,\infty}\|\xi_{1}\|_{\beta_{1}-1,p,\infty}+\|\pi(\phi_{k}\ast\xi_{k},\xi_{1})\|_{\alpha+\beta_{1}-1,p/2,\infty}\big)\Big)\\
 & \qquad\times\big(\|u_{0}^{(1)}\|_{\alpha,p,\infty}+\|u_{0}^{(2)}\|_{\alpha,p,\infty}+1\big)\big(\|u\|_{\alpha,p,\infty}+\|u^{\#}\|_{2\alpha,p/2,\infty}+1\big).
\end{align*}
Since $\gamma_{2}+\beta_{2}+\beta_{1}-2\ge\alpha+\beta_{2}-1>0$,
we can estimate the resonant term for $k=2$ by 
\begin{align*}
\|\pi(\phi_{2}\ast\xi_{2},\xi_{1})\|_{\alpha+\beta_{1}-1,p/2,\infty} & \lesssim\|\pi(\phi_{2}\ast\xi_{2},\xi_{1})\|_{\beta_{2}+\beta_{1}+\gamma_{2}-2,p/2,\infty}\\
 & \lesssim\|\phi_{2}\ast\xi_{2}\|_{\beta_{2}+\gamma_{2}-1,p,\infty}\|\xi_{1}\|_{\beta_{1}-1,p,\infty}\\
 & \lesssim\|\phi_{2}\|_{\gamma_{2},1,\infty}\|\xi_{2}\|_{\beta_{2}-1,p,\infty}\|\xi_{1}\|_{\beta_{1}-1,p,\infty},
\end{align*}
where we used Bony's estimates (Lemma~\ref{lem:paraproduct}) and
Young's inequality (Lemma~\ref{lem:Convolution}).

With the definitions from Proposition~\ref{prop:bound ito map} we
thus obtain
\begin{align}
\begin{split}\| & \phi_{1}\ast\pi(\sigma_{1}(u),\xi_{1})\|_{2\alpha,p/2,\infty}+\|\phi_{2}\ast\pi(\sigma_{2}(u),\xi_{2})\|_{2\alpha,p/2,\infty}\\
 & \quad\lesssim\Delta C_{\sigma}C_{\xi}C_{u_{0}}\big(\|u\|_{\alpha,p,\infty}+\|u^{\#}\|_{2\alpha,p/2,\infty}+1\big)
\end{split}
\label{eq:estimate pi}
\end{align}
and
\begin{align}
\|u\|_{\alpha,p,\infty} & \lesssim\|u_{0}\|_{\alpha,p,\infty}+\Delta C_{\sigma}C_{\xi}C_{u_{0}}\big(\|u\|_{\alpha,p,\infty}+\|u^{\#}\|_{2\alpha,p/2,\infty}+1\big).\label{eq:estnormU}
\end{align}
Moreover, by the formula for $u^{\#}$ as given in (\ref{eq:u raute}),
by the estimates~(\ref{eq:estimate T_xi}), (\ref{eq:estimate pi})
and Lemma~\ref{lem:convolution paracontrolled} we see
\begin{align*}
\|u^{\#}\|_{2\alpha,p,\infty} & \le\|u_{0}^{\#}\|_{2\alpha,p/2,\infty}+\sum_{j=1,2}\big(\|\phi_{j}\ast\pi(\sigma_{j}(u),\xi_{j})\|_{2\alpha,p/2,\infty}\\
 & \quad+\|\phi_{j}\ast(T_{\xi_{j}}\sigma_{j}(u))\|_{2\alpha,p/2,\infty}+\|R_{\phi_{j}}(\sigma_{j}(u),\xi_{j})\|_{2\alpha,p/2,\infty}\big)\\
 & \le\|u_{0}^{\#}\|_{2\alpha,p/2,\infty}+C\Delta C_{\sigma}C_{\phi}C_{\xi}C_{u_{0}}\big(\|u\|_{\alpha,p,\infty}+\|u^{\#}\|_{2\alpha,p/2,\infty}+1\big)
\end{align*}
for a constant $C>0$. Assuming $C\Delta C_{\sigma}C_{\phi}C_{\xi}C_{u_{0}}\le1/2$,
one gets 
\[
\|u^{\#}\|_{2\alpha,p/2,\infty}\le\|u\|_{\alpha,p,\infty}+2\|u_{0}^{\#}\|_{2\alpha,p/2,\infty}+1.
\]
Combining this with (\ref{eq:estnormU}), we have for another constant
$C'>0$
\[
\|u\|_{\alpha,p,\infty}\le\|u_{0}\|_{\alpha,p,\infty}+C'\Delta C_{\sigma}C_{\xi}C_{u_{0}}\big(\|u\|_{\alpha,p,\infty}+\|u_{0}^{\#}\|_{2\alpha,p/2,\infty}+1\big).
\]
Therefore, $\|u\|_{\alpha,p,\infty}\le2\|u_{0}\|_{\alpha,p,\infty}+\|u_{0}^{\#}\|_{2\alpha,p/2,\infty}+1$
provided $C'\Delta C_{\sigma}C_{\xi}C_{u_{0}}\le1/2$.
\end{proof}
Finally, we can establish the existence of a unique local Lipschitz
continuous extension of the Itô-Lyons map $\hat{S}$ from~(\ref{eq:itomap})
and thus conclude the existence of unique solution of the Volterra
equation~(\ref{eq:convol}) for the rough setting by approximating
the \textit{\emph{convolutional rough paths}} with smooth functions.
As the estimates work analogously to the proof of Proposition~\ref{prop:bound ito map},
we present only the key estimates without giving to many details.
\begin{proof}[Proof of Theorem~\ref{thm:solve}]
 For $i=1,2$ let $(\xi_{1}^{i},\xi_{2}^{i})\in\mathcal{C}_{c}^{\infty}\times\mathcal{B}_{p,\infty}^{\beta_{2}-1}$
be two signals and $(u_{0}^{(1),i},u_{0}^{(2),i},u^{\#,i})\in(\mathcal{B}_{p,\infty}^{\alpha})^{2}\times\mathcal{B}_{p/2,\infty}^{2\alpha}$
be two initial conditions. Let $M\geq1$ be a constant such that 
\[
C_{\phi},C_{\xi^{i}},C_{u_{0}^{i}},\|u_{0}^{\#,i}\|_{2\alpha,p/2,\infty}\leq M,\quad\text{for}\quad i=1,2,
\]
using the definitions from Proposition~\ref{prop:bound ito map}.
Assuming that 
\[
L_{\sigma}:=\big(\|\sigma_{1}\|_{C^{3}}+\|\sigma_{2}\|_{C^{2}}\big)\big(1+\|\sigma_{1}\|_{C^{3}}+\|\sigma_{2}\|_{C^{2}}\big)
\]
is sufficiently small depending on $M$, Proposition~\ref{prop:YoungLip}
implies the existence of corresponding unique solutions $u^{1},u^{2}$
to the Volterra equation~(\ref{eq:convol}) and additionally Proposition~\ref{prop:bound ito map}
leads to the bound
\[
\|u^{i}\|_{\alpha,p,\infty}\lesssim M^{2},\quad i=1,2.
\]
Based on the ansatz for $u^{1},u^{2}$ (see (\ref{eq:ansatz rewritten}))
and Young's inequality (Lemma~\ref{lem:Convolution}), we observe
\begin{align}
\begin{split} & \|u^{1}-u^{2}\|_{\alpha,p,\infty}\quad\\
 & \lesssim\|u_{0}^{1}-u_{0}^{2}\|_{\alpha,p,\infty}+\sum_{j=1,2}\|\phi_{j}\|_{\gamma_{j},1,\infty}\big(\|T_{\sigma_{j}(u^{1})}\xi_{j}^{1}-T_{\sigma_{j}(u^{2})}\xi_{j}^{2}\|_{\beta_{j}-1,p,\infty}\\
 & \qquad+\|\pi(\sigma_{j}(u^{1}),\xi_{j}^{1})-\pi(\sigma_{j}(u^{2}),\xi_{j}^{2})\|_{\alpha+\beta_{j}-1,p/2,\infty}+\|T_{\xi_{j}^{1}}\sigma_{j}(u^{1})-T_{\xi_{j}^{2}}\sigma_{j}(u^{2})\|_{\beta_{j}-1,p,\infty}\big).
\end{split}
\label{eq:difference lipschitz}
\end{align}
By the paraproduct estimates (Lemma~\ref{lem:paraproduct}) and Lemma~\ref{lem:differenceBesovNorm}
we obtain
\begin{align}
\begin{split}\|T_{\sigma_{j}(u^{1})}\xi_{j}^{1}-T_{\sigma_{j}(u^{2})}\xi_{j}^{2}\|_{\beta_{j}-1,p,\infty} & \lesssim\|\sigma_{j}\|_{\infty}\|\xi_{j}^{1}-\xi_{j}^{2}\|_{\beta_{j}-1,p,\infty}+\|\sigma_{j}\|_{C^{2}}\|\xi^{2}\|_{\beta_{j}-1,p,\infty}\\
 & \quad\times\big(1+\|u^{1}\|_{\alpha,p,\infty}+\|u^{2}\|_{\alpha,p,\infty}\big)\|u^{1}-u^{2}\|_{\alpha,p,\infty}
\end{split}
\label{eq:lipschitz first term}
\end{align}
and
\begin{align}
\begin{split} & \|T_{\xi_{j}^{1}}\sigma_{j}(u^{1})-T_{\xi_{j}^{2}}\sigma(u^{2})\|_{\beta_{j}-1,p,\infty}\\
 & \quad\lesssim\|\sigma_{j}\|_{C^{2}}\|\xi_{j}^{1}\|_{\beta_{j}-1,p,\infty}\big(1+\|u^{1}\|_{\alpha,p,\infty}+\|u^{2}\|_{\alpha,p,\infty}\big)\|u^{1}-u^{2}\|_{\alpha,p,\infty}\\
 & \qquad+\|\sigma_{j}\|_{C^{1}}\|u^{2}\|_{\alpha,p,\infty}\|\xi_{j}^{1}-\xi_{j}^{2}\|_{\beta_{j}-1,p,\infty}.
\end{split}
\label{eq:lipschitz last term}
\end{align}

It remains to show the local Lipschitz continuity of $\pi(\sigma_{j}(u^{i}),\xi_{j}^{i})$.
For $j=2$, due to $\alpha+\beta_{2}-1>0$, we directly apply the
paraproduct estimates (Lemma~\ref{lem:paraproduct}) and Lemma~\ref{lem:differenceBesovNorm}
to get
\begin{align*}
 & \|\pi(\sigma_{2}(u^{1}),\xi_{2}^{1})-\pi(\sigma_{2}(u^{2}),\xi_{2}^{2})\|_{\alpha+\beta_{2}-1,p/2,\infty}\\
 & \quad\lesssim\|\sigma_{2}\|_{C^{2}}\|\xi_{2}^{1}\|_{\beta_{2}-1,p,\infty}\big(1+\|u^{1}\|_{\alpha,p,\infty}+\|u^{2}\|_{\alpha,p,\infty}\big)\|u^{1}-u^{2}\|_{\alpha,p,\infty}\\
 & \qquad+\|\sigma_{2}\|_{C^{1}}\|u^{2}\|_{\alpha,p,\infty}\|\xi_{2}^{1}-\xi_{2}^{2}\|_{\beta_{2}-1,p,\infty}.
\end{align*}
For $j=1$ we linearise $\pi(\sigma_{1}(u^{i}),\xi_{1}^{i})$ more
carefully using Lemma~\ref{lem:linearization}, the ansatz and the
commutator estimate~(\ref{eq:commutator}). Rewriting the ansatz~(\ref{eq:finalAnsatz})
as 
\begin{align*}
u^{i} & =\sum_{k=1,2}T_{\tilde{u}_{k}^{i}}(\phi_{k}\ast\xi_{k}^{i})+u^{\#,i}
\end{align*}
with 
\begin{align*}
 & u^{\#,i}:=u_{0}^{\#,i}+\sum_{j=1,2}\big(\phi_{j}\ast\big(\pi(\sigma_{j}(u^{i}),\xi_{j}^{i})+T_{\xi_{j}^{i}}\sigma_{j}(u^{i})\big)+R_{\phi_{j}}(\sigma_{j}(u^{i}),\xi_{j}^{i})\big),\\
 & \tilde{u}_{k}^{i}:=u_{0}^{(k),i}+\sigma_{k}(u^{i}(\cdot-r_{k})),\quad k=1,2,
\end{align*}
we find as in the proof of Proposition~\ref{prop:bound ito map}
that
\begin{align*}
\pi(\sigma_{1}(u^{i}),\xi_{1}^{i}) & =\sum_{k=1,2}\bigg(\sigma_{1}'(u^{i})\tilde{u}_{k}^{i}\pi(\phi_{k}\ast\xi_{k}^{i},\xi_{1}^{i})+\sigma_{1}'(u^{i})\Gamma(\tilde{u}_{k}^{i},\phi_{k}\ast\xi_{k}^{i},\xi_{1}^{i})\\
 & \qquad\qquad+\Gamma(\sigma_{1}'(u^{i}),T_{\tilde{u}_{k}^{i}}(\phi_{k}\ast\xi_{k}^{i}),\xi_{1}^{i})\bigg)+\pi(T_{\sigma_{1}'(u^{i})}u^{\#,i},\xi_{1}^{i})+\pi(S_{\sigma_{1}}(u^{i}),\xi_{1}^{i})\\
 & =:\sum_{k=1,2}\big(D_{1}^{k,i}+D_{2}^{k,i}+D_{3}^{k,i}\big)+D_{4}^{i}+D_{5}^{i}.
\end{align*}

We estimate the differences of these five terms, with $k=1,2$, using
again Besov embeddings ($\alpha>1/p$), the paraproduct estimates
(Lemma~\ref{lem:paraproduct}) and the auxiliary Besov estimates
(Lemma~\ref{lem:shiftBesovNorm}, \ref{lem:product} and \ref{lem:differenceBesovNorm}).
In order to abbreviate theses estimates, let us introduce 

\begin{align*}
\tilde{C}_{u} & :=\bigg(1+\sum_{i,k=1}^{2}\big(\|u^{i}\|_{\alpha,p,\infty}+\|u^{\#,i}\|_{2\alpha,p/2,\infty}+\|u_{0}^{(k),i}\|_{\alpha,p,\infty}\big)\bigg)^{2},\\
\tilde{C}_{\xi} & :=1+\|\pi(\phi_{1}*\xi_{1}^{1},\xi_{1}^{1})\|_{\alpha+\beta_{1}-1,p/2,\infty}+\sum_{i,k=1}^{2}\|\xi_{k}^{i}\|_{\beta_{k}-1,p,\infty},\\
\tilde{C}_{\phi} & :=\|\phi_{1}\|_{\gamma_{1},1,\infty}+\|\phi_{2}\|_{\gamma_{2},1,\infty}.
\end{align*}
For the first term we have
\begin{align*}
 & \|D_{1}^{k,1}-D_{1}^{k,2}\|_{\alpha+\beta_{1}-1,p/2,\infty}\\
 & \quad\quad\lesssim L_{\sigma}\tilde{C}_{\xi}\tilde{C}_{u}\big(\|u_{0}^{(k),1}-u_{0}^{(k),2}\|_{\alpha,p,\infty}+\|u^{1}-u^{2}\|_{\alpha,p,\infty}\\
 & \qquad\quad\qquad\qquad\qquad+\|\pi(\phi_{k}\ast\xi_{k}^{1},\xi_{1}^{1})-\pi(\phi_{k}\ast\xi_{k}^{2},\xi_{1}^{2})\|_{\alpha+\beta_{1}-1,p/2,\infty}\big).
\end{align*}
Applying the commutator estimate~(\ref{eq:commutator}) and Young's
inequality (Lemma~\ref{lem:Convolution}), we obtain
\begin{align*}
 & \|D_{2}^{k,1}-D_{2}^{k,2}\|_{\alpha+\beta_{1}-1,p/2,\infty}\\
 & \quad\quad\lesssim L_{\sigma}\tilde{C}_{\phi}\tilde{C}_{\xi}^{2}\tilde{C}_{u}\big(\|u_{0}^{(k),1}-u_{0}^{(k),2}\|_{\alpha,p,\infty}+\|u^{1}-u^{2}\|_{\alpha,p,\infty}\\
 & \qquad\quad\qquad\qquad\qquad\quad+\|\xi_{k}^{1}-\xi_{k}^{2}\|_{\beta_{k}-1,p,\infty}+\|\xi_{1}^{1}-\xi_{1}^{2}\|_{\beta_{1}-1,p,\infty}\big).
\end{align*}
The commutator estimate and Young's inequality moreover yield
\begin{align*}
 & \|D_{3}^{k,1}-D_{3}^{k,2}\|_{\alpha+\beta_{1}-1,p/2,\infty}\\
 & \quad\quad\lesssim L_{\sigma}\tilde{C}_{\phi}\tilde{C}_{\xi}^{2}\tilde{C}_{u}\big(\|u^{1}-u^{2}\|_{\alpha,p,\infty}+\|u_{0}^{(k),1}-u_{0}^{(k),2}\|_{\alpha,p,\infty}\\
 & \qquad\quad\qquad\qquad\qquad\quad+\|\xi_{k}^{1}-\xi_{k}^{2}\|_{\beta_{k}-1,p,\infty}+\|\xi_{1}^{1}-\xi_{1}^{2}\|_{\beta_{1}-1,p,\infty}\big).
\end{align*}
Applying Lemma~\ref{lem:convolution paracontrolled}, we deduce that
\begin{align*}
 & \|D_{4}^{1}-D_{4}^{2}\|_{\alpha+\beta_{1}-1,p/2,\infty}\\
 & \quad\quad\lesssim L_{\sigma}\tilde{C}_{\xi}\tilde{C}_{u}\big(\|u^{1}-u^{2}\|_{\alpha,p,\infty}+\|u^{\#,1}-u^{\#,2}\|_{2\alpha,p/2,\infty}+\|\xi_{1}^{1}-\xi_{1}^{2}\|_{\beta_{1}-1,p,\infty}\big).
\end{align*}
Finally, \citep[Lemna~4.2]{Promel2015} leads to
\begin{align*}
 & \|D_{5}^{1}-D_{5}^{2}\|_{\alpha+\beta_{1}-1,p/2,\infty}\lesssim L_{\sigma}\tilde{C}_{\xi}^{2}\tilde{C}_{u}\big(\|u^{1}-u^{2}\|_{\alpha,p,\infty}+\|\xi_{1}^{1}-\xi_{1}^{2}\|_{\beta_{1}-1,p,\infty}\big).
\end{align*}
Relying additionally on the estimate 
\begin{align*}
 & \|\pi(\phi_{2}\ast\xi_{2}^{1},\xi_{1}^{1})-\pi(\phi_{2}\ast\xi_{2}^{2},\xi_{1}^{2})\|_{\alpha+\beta_{1}-1,p/2,\infty}\\
 & \quad\leq\|\phi_{2}\|_{\gamma_{2}-1,1,\infty}\|\xi_{1}^{1}\|_{\beta_{1}-1,p,\infty}\|\xi_{2}^{1}-\xi_{2}^{2}\|_{\beta_{2}-1,p,\infty}+\|\phi_{2}\|_{\gamma_{2},1,\infty}\|\xi_{2}^{1}\|_{\beta_{2}-1,p,\infty}\|\xi_{1}^{1}-\xi_{1}^{2}\|_{\beta_{1}-1,p/2,\infty},
\end{align*}
we conclude that there exist a constant $C(M)$ such that 
\begin{align*}
 & \|\pi(\sigma_{1}(u^{1}),\xi_{1}^{1})-\pi(\sigma_{1}(u^{2}),\xi_{1}^{2})\|_{2\alpha,p/2,\infty}\\
 & \quad\lesssim L_{\sigma}C(M)\bigg(\|u^{1}-u^{2}\|_{\alpha,p,\infty}+\|\pi(\phi_{1}\ast\xi_{1}^{1},\xi_{1}^{1})-\pi(\phi_{1}\ast\xi_{1}^{2},\xi_{1}^{2})\|_{\alpha+\beta_{1}-1,p/2,\infty}\\
 & \qquad+\sum_{j=1,2}\big(\|\xi_{j}^{1}-\xi_{j}^{2}\|_{\beta_{1}-1,,p,\infty}+\|u_{0}^{(j),1}-u_{0}^{(j),2}\|_{\alpha,p,\infty}\big)+\|u^{\#,1}-u^{\#,2}\|_{2\alpha,p/2,\infty}\bigg).
\end{align*}
The last term can be further estimated by

\begin{align*}
 & \|u^{\#,1}-u^{\#,2}\|_{2\alpha,p/2,\infty}\\
 & \quad\leq\|u_{0}^{\#,1}-u_{0}^{\#,2}\|_{2\alpha,p/2,\infty}+\sum_{j=1,2}\|\phi_{j}\|_{\gamma_{j},1,\infty}\big(\|\pi(\sigma_{j}(u^{1}),\xi_{j}^{1})-\pi(\sigma_{j}(u^{2}),\xi_{j}^{2})\|_{2\alpha,p/2,\infty}\\
 & \qquad+\|T_{\xi_{j}^{1}}\sigma_{j}(u^{1})-T_{\xi_{j}^{2}}\sigma_{j}(u^{2})\|_{2\alpha,p/2,\infty}+\|R_{\phi_{j}}(\sigma_{j}(u^{1}),\xi_{j}^{1})-R_{\phi_{j}}(\sigma_{j}(u^{2}),\xi_{j}^{2})\|_{2\alpha,p/2,\infty}\big)\\
 & \quad\lesssim\|u_{0}^{\#,1}-u_{0}^{\#,2}\|_{2\alpha,p/2,\infty}+\tilde{C}_{\phi}L_{\sigma}C(M)\bigg(\|u^{1}-u^{2}\|_{\alpha,p,\infty}+\|u^{\#,1}-u^{\#,2}\|_{2\alpha,p/2,\infty}\\
 & \qquad+\sum_{j=1,2}\big(\|\xi_{j}^{1}-\xi_{j}^{2}\|_{\beta_{1}-1,,p,\infty}+\|u_{0}^{(j),1}-u_{0}^{(j),2}\|_{\alpha,p,\infty}\big)\\
 & \qquad+\|\pi(\phi_{1}\ast\xi_{1}^{1},\xi_{1}^{1})-\pi(\phi_{1}\ast\xi_{1}^{2},\xi_{1}^{2})\|_{\alpha+\beta_{1}-1,p/2,\infty}\bigg),
\end{align*}
where we used that $R_{\phi}(\cdot,\cdot)$ from Lemma~\ref{lem:convolution paracontrolled}
is a bounded linear operator by its definition. For $L_{\sigma}$
small enough the last inequality in combination with (\ref{eq:difference lipschitz})
(\ref{eq:lipschitz first term}) and (\ref{eq:lipschitz last term})
implies 
\begin{align*}
\|u^{1}-u^{2}\|_{\alpha,p,\infty} & \lesssim\|u_{0}^{\#,1}-u_{0}^{\#,2}\|_{2\alpha,p/2,\infty}+\hat{C}\bigg(\|\pi(\phi_{1}\ast\xi_{1}^{1},\xi_{1}^{1})-\pi(\phi_{1}\ast\xi_{1}^{2},\xi_{1}^{2})\|_{\alpha+\beta_{1}-1,p/2,\infty}\\
 & \qquad+\sum_{j=1,2}\big(\|\xi_{j}^{1}-\xi_{j}^{2}\|_{\beta_{1}-1,,p,\infty}+\|u_{0}^{(j),1}-u_{0}^{(j),2}\|_{\alpha,p,\infty}\bigg),
\end{align*}
for some constant $\hat{C}:=C(L_{\sigma},M)>0$. This Lipschitz estimate
allows to extend the Itô-Lyons map~(\ref{eq:itomap}) from smooth
driving signals $\xi_{1}$ with compact support to the space of convolutional
rough paths. 
\end{proof}

\subsection{Solutions for general vector fields\label{subsec:global solution}}

In Theorem~\ref{thm:solve} we assumed that
\[
\Delta=\|\sigma_{1}\|_{C^{3}}\|\phi_{1}\|_{\gamma_{1},1,\infty}+\|\sigma_{2}\|_{C^{2}}\|\phi_{2}\|_{\gamma_{2},1,\infty}
\]
is sufficiently small, which can be interpreted as a flatness condition
on the vector fields $\sigma_{1},\sigma_{2}$.\textcolor{blue}{{} }In
this subsection we discuss how the existence and uniqueness results
can be extended to general vector fields~$\sigma_{1},\sigma_{2}$
applying a scaling argument in the spirit of \citet{Gubinelli2015}
to a localized version of (\ref{eq:convol}).\textcolor{blue}{{} }Interestingly,
$\Delta$ is small if the (localised) kernels $\phi_{1},\phi_{2}$
are supported on a sufficiently small domain and if $\gamma_{1},\gamma_{2}<1$,
cf. Remark~\ref{rem:smallKernel}. 
\begin{thm}
\label{thm:localSol}Let $p\in[3,\infty]$, $0<\beta_{1}\le\beta_{2}\le1$
and $0<\gamma_{1}\le\gamma_{2}$ satisfy \textup{$\alpha:=\beta_{1}+\gamma_{1}-1\in(\frac{1}{3},1)$,
$\alpha+\beta_{1}<1<2\alpha+\beta_{1}$ and} $\alpha+\beta_{2}>1$.
If $\gamma_{j}>1$ for $j=1,2$, let also $\beta_{j}>1/p$ be fullfilled.
Suppose that 
\begin{enumerate}
\item $\sigma_{1}\in C^{3}$ and $\sigma_{2}\in C^{2}$ with $\sigma_{1}(0)=\sigma_{2}(0)=0$,
\item $\phi_{j}\in\mathcal{B}_{1,\infty}^{\gamma_{j}}$ such that there
exists $r_{j}\in\R$ with $\|(\cdot-r_{j})\phi_{j}\|_{\gamma_{j}+1,1,\infty}<\infty,$
for $j=1,2$, 
\item $(\xi_{1},\mu)\in\mathcal{B}_{p}^{\beta_{1}-1,\gamma_{1}}(\phi_{1})$
and $\xi_{2}\in\mathcal{B}_{p,\infty}^{\beta_{2}-1}$,
\item $u_{0}\in\mathcal{B}_{p/2,\infty}^{2\alpha}$. 
\end{enumerate}
Additionally, we impose the structural assumption on the kernel~$\phi_{1}$:
\begin{enumerate}
\item[(v)]  There is some $\psi\in\mathcal{B}_{1,\infty}^{s+\delta}$\textup{
for $\delta>(2-2\beta_{1})\vee1$ and $s\in[0,1)$ such that $\phi_{1}(x)=(x-r_{1})^{-s}\ind_{(r_{1},\infty)}(x)\psi(x)$.}
\end{enumerate}
Let $\chi$ be a $C^{\infty}$ function with $\supp\chi\subset[-2,2]$
and $\chi(x)=1$ for $x\in[-1,1]$. Then there is some $\lambda\in(0,1)$
depending on $(u_{0},(\xi_{1},\mu),\xi_{2})$, $\phi_{1},\phi_{2}$
and $\sigma_{1},\sigma_{2}$, such that the localized Volterra equation
\begin{equation}
u(t)=u_{0}^{loc,\lambda}(t)+\big(\phi_{1}^{loc,\lambda}*(\sigma_{1}(u)\xi_{1})\big)(t)+\big(\phi_{2}^{loc,\lambda}*(\sigma_{2}(u)\xi_{2})\big)(t),\quad t\in\R,\label{eq:convLoc-1}
\end{equation}
with kernels $\phi_{j}^{loc,\lambda}:=\chi(\lambda^{-1}\cdot)\phi_{j}$,
$j=1,2$, and initial condition $u_{0}^{loc,\lambda}:=\chi(\lambda^{-1}\cdot)u_{0}$
has a unique solution in the space $\mathcal{B}_{p,\infty}^{\alpha-\epsilon}$
for any $\varepsilon>0$.
\end{thm}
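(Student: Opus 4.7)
The plan is to reduce Theorem~\ref{thm:localSol} to the previous Theorem~\ref{thm:solve} by choosing the localization parameter $\lambda$ small enough that the smallness quantity $\Delta^{loc,\lambda}$ built from the localized kernels $\phi_j^{loc,\lambda} := \chi(\lambda^{-1}\cdot)\phi_j$ drops below the threshold of Theorem~\ref{thm:solve}, independently of the magnitudes of $\sigma_1, \sigma_2$. The initial condition $u_0 \in \mathcal{B}_{p/2,\infty}^{2\alpha}$ is embedded into the paracontrolled framework by setting $u_0^{(1)} = u_0^{(2)} = 0$ and $u_0^{\#} := u_0^{loc,\lambda} = \chi(\lambda^{-1}\cdot)u_0$.

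The first preparatory step is to check that $\phi_j^{loc,\lambda}$ satisfies hypothesis (ii) of Theorem~\ref{thm:solve}. For $\phi_2$ this is routine since multiplication by $\chi(\lambda^{-1}\cdot)\in\mathcal{C}_c^\infty$ preserves Besov regularity and the shift-weighted norm $\|(\cdot-r_2)\phi_2^{loc,\lambda}\|_{\gamma_2+1,1,\infty}$. For the singular kernel $\phi_1$ the structural assumption (v) is essential: write $\phi_1^{loc,\lambda}(x) = (x-r_1)^{-s}\ind_{(r_1,\infty)}(x)\tilde\psi(x)$ with $\tilde\psi := \chi(\lambda^{-1}\cdot)\psi \in \mathcal{B}_{1,\infty}^{s+\delta}$, and invoke a dedicated Besov estimate for such semi-singular products to recover both $\phi_1^{loc,\lambda} \in \mathcal{B}_{1,\infty}^{\gamma_1}$ and the shifted $(\cdot - r_1)\phi_1^{loc,\lambda} \in \mathcal{B}_{1,\infty}^{\gamma_1+1}$. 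In parallel, the convolutional rough path $(\xi_1, \mu) \in \mathcal{B}_p^{\beta_1,\gamma_1}(\phi_1)$ has to be transferred to $\mathcal{B}_p^{\beta_1,\gamma_1}(\phi_1^{loc,\lambda})$. Writing $\phi_1^{loc,\lambda}\ast\xi_1 = \phi_1\ast\xi_1 - ((1-\chi(\lambda^{-1}\cdot))\phi_1)\ast\xi_1$ and expanding $\pi(\cdot,\xi_1)$ via Bony's decomposition and Lemma~\ref{lem:convolution paracontrolled}, one expresses the localized resonant term $\mu^{loc,\lambda} := \pi(\phi_1^{loc,\lambda}\ast\xi_1,\xi_1)$ in terms of $\mu$ plus terms built from $\xi_1$, $\xi_2$ and regular kernels, and closes the construction for rough $\xi_1$ by approximation with smooth compactly supported functions.

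The heart of the argument is the scaling estimate showing that $\|\phi_j^{loc,\lambda}\|_{\gamma_j,1,\infty} \to 0$ as $\lambda \to 0$. For $\gamma_j < 1$ the standard scaling of $\mathcal{B}_{1,\infty}^{\gamma}$ norms for functions supported on $[-2\lambda,2\lambda]$ yields a bound of the form $\lambda^{1-\gamma_j}$ times a constant depending on $\phi_j$; for $\phi_1$, the precise condition $\delta > (2-2\beta_1)\vee 1$ in (v) is tuned so that the smoothness of $\tilde\psi$ dominates the singular weight $(x-r_1)^{-s}$ in this scaling. When $\gamma_j > 1$ the additional hypothesis $\beta_j > 1/p$ supplies the extra regularity needed to retain a favourable scaling. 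Once this decay is in hand, $\Delta^{loc,\lambda} = \|\sigma_1\|_{C^3}\|\phi_1^{loc,\lambda}\|_{\gamma_1,1,\infty} + \|\sigma_2\|_{C^2}\|\phi_2^{loc,\lambda}\|_{\gamma_2,1,\infty}$ can be made arbitrarily small by taking $\lambda$ small (where the choice of $\lambda$ depends on the full data), and Theorem~\ref{thm:solve} applies to~(\ref{eq:convLoc-1}), yielding a unique solution $u \in \mathcal{B}_{p,\infty}^{\alpha}$ and hence $u \in \mathcal{B}_{p,\infty}^{\alpha-\epsilon}$ for any $\epsilon > 0$. The slight $\epsilon$-loss is a harmless byproduct of the approximation arguments (smooth truncations and frequency-cut) used to close the scaling estimate.

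The principal obstacle is precisely this scaling estimate for the singular kernel $\phi_1$: one has to track how the $\mathcal{B}_{1,\infty}^{\gamma_1}$ norm of $\chi(\lambda^{-1}\cdot)\phi_1$ depends on $\lambda$ while simultaneously handling the cutoff and the singular factor $(x-r_1)^{-s}$, using a Littlewood--Paley decomposition of $\tilde\psi$ and exploiting its extra $\delta$ units of regularity to absorb the singularity. All remaining steps — the Lipschitz estimates on $\hat S$, the paraproduct bookkeeping, and the rough-path approximation — reduce to the arguments already carried out for Proposition~\ref{prop:bound ito map} and Theorem~\ref{thm:solve}.
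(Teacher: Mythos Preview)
Your strategy differs from the paper's in a fundamental way. You try to make the localized kernel norms $\|\phi_j^{loc,\lambda}\|_{\gamma_j,1,\infty}$ small and then invoke Theorem~\ref{thm:solve} directly. The paper instead performs a \emph{dilation}: setting $\tilde u:=\Lambda_\lambda u=u(\lambda\cdot)$, one checks that $\tilde u$ solves a rescaled equation in which the vector fields become $\tilde\sigma_j=\delta\sigma_j$ with $\delta=\lambda^{\beta_j+(1\wedge\gamma_j)-1-1/p-2\tau}\to0$, while the rescaled kernels $\tilde\phi_j^{loc}$, noises $\tilde\xi_j$, initial datum $\tilde u_0^{loc}$ and resonant term remain merely \emph{bounded} uniformly in $\lambda$. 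Theorem~\ref{thm:solve} is then applied at the slightly lower regularity $\tilde\gamma_j=\gamma_j-\varepsilon$ (this is the origin of the $\varepsilon$-loss), and assumption~(v) enters only at the very end, to bound $\|\pi((\phi_1-\phi_1^{loc,\lambda})\ast\xi_1,\xi_1)\|$ uniformly in $\lambda$ via the extension operator for $\mathcal{B}_{1,\infty}^\delta(\R_+)$.

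Your route has a genuine gap. The claim that $\|\phi_j^{loc,\lambda}\|_{\gamma_j,1,\infty}\to0$ is false in the stated norm: for a kernel with a jump or singularity at $r_j$ (e.g.\ $\phi_j=\ind_{[0,\infty)}\psi$ with $\psi(0)\neq0$, so $\gamma_j=1$), the localized kernel retains the same jump regardless of $\lambda$, and its $\mathcal{B}_{1,\infty}^{1}$ norm is bounded below. At best one obtains decay in a \emph{lower} norm $\mathcal{B}_{1,\infty}^{\gamma_j-\varepsilon}$ (this is precisely Remark~\ref{rem:smallKernel}), and even that argument is explicitly restricted to $\gamma_1,\gamma_2<1$; your one-line appeal to $\beta_j>1/p$ for the case $\gamma_j>1$ does not explain how to recover any decay. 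Moreover, the smallness threshold in Theorem~\ref{thm:solve} depends on $C_\phi$, which contains $\|(\cdot-r_j)\phi_j^{loc,\lambda}\|_{\gamma_j+1,1,\infty}$; multiplication by $\chi(\lambda^{-1}\cdot)$ can make this quantity grow like a negative power of $\lambda$, so even if $\Delta^{loc,\lambda}\to0$ the product $\Delta^{loc,\lambda}C_\phi^{loc,\lambda}$ need not be small. The dilation argument in the paper is designed exactly to balance these competing rates (cf.\ Lemma~\ref{lem:dialation}), and your sketch does not address this. Finally, you misplace the role of assumption~(v): it is not used to control the scaling of $\|\phi_1^{loc,\lambda}\|_{\gamma_1,1,\infty}$, but to show that the smoothed remainder kernel $(1-\chi(\lambda^{-1}\cdot))\phi_1$ has $\mathcal{B}_{1,\infty}^{\gamma_1+\varepsilon'}$ norm bounded uniformly in $\lambda$, so that the correction $\pi\big(((1-\chi(\lambda^{-1}\cdot))\phi_1)\ast\xi_1,\xi_1\big)$ is well defined by Bony's estimate alone.
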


\begin{proof}
Let us introduce the dilation operator $\Lambda_{\lambda}f:=f(\lambda\cdot)$
for any $f\in\mathcal{S}^{\prime}$. For $\delta,\lambda>0$, we first
observe that 
\begin{align*}
u & =u_{0}^{loc,\lambda}+\sum_{j=1,2}\phi_{j}^{loc,\lambda}\ast(\sigma_{j}(u)\xi_{j})\\
 & =u_{0}^{loc,\lambda}+\sum_{j=1,2}\int_{\R}\frac{\lambda}{\delta}\phi_{j}^{loc,\lambda}(\cdot-\lambda s)\delta\sigma_{j}\big(u(\lambda s)\big)\Lambda_{\lambda}\xi_{j}(s)\d s.
\end{align*}
Therefore, $u$ solves (\ref{eq:convLoc-1}) if and only if $\tilde{u}:=\Lambda_{\lambda}u$
solves 
\begin{equation}
\tilde{u}=\Lambda_{\lambda}u_{0}^{loc,\lambda}+\sum_{j=1,2}\int_{\R}\frac{\lambda}{\delta}\Lambda_{\lambda}\phi_{j}^{loc,\lambda}(\cdot-s)\delta\sigma_{j}\big(\tilde{u}(s)\big)\Lambda_{\lambda}\xi_{j}(s)\d s.\label{eq:uTilde1-1}
\end{equation}
Applying the dilation estimate from \citep[Lem.~2.3]{Promel2015}
we have
\begin{equation}
\big\|\Lambda_{\lambda}\xi_{j}\big\|_{\beta_{j}-1,p,\infty}\lesssim(1+\lambda^{\beta_{j}-1}|\log\lambda|)\lambda^{-1/p}\|\xi_{j}\|_{\beta_{j}-1,p,\infty}.\label{eq:tildeXi-1}
\end{equation}
The auxiliary Lemma~\ref{lem:dialation} yields
\begin{align}
 & \begin{split} & \|\Lambda_{\lambda}\phi_{j}^{loc,\lambda}\|_{\gamma_{j},1,\infty}=\|\chi\Lambda_{\lambda}\phi_{j}\|_{\gamma_{j},1,\infty}\lesssim\lambda^{(\gamma'\wedge1)-1}|\log\lambda|\|\phi_{j}\|_{\gamma_{j},1,\infty}\qquad\text{for any }\gamma'<\gamma_{j},\\
 & \|\Lambda_{\lambda}u_{0}^{loc,\lambda}\|_{2\alpha,p/2,\infty}=\|\chi\Lambda_{\lambda}u_{0}\|_{2\alpha,p/2,\infty}\lesssim\lambda^{\alpha-1/p}|\log\lambda|\|u_{0}\|_{2\alpha,p/2,\infty}.
\end{split}
\label{eq:tildePhi-1}
\end{align}
We now may choose $\delta$ such that the norms of the scaled noise
and kernels remain bounded while $\|\delta\sigma_{j}\|_{C^{3}}\to0$
for $\delta\to0$. Due to the assumptions on the parameters, we have
$\frac{1}{p}<\beta_{j}+(\gamma_{j}\wedge1)-1$ such that there is
some $0<\tau<(\beta_{j}+(\gamma_{j}\wedge1)-1-1/p)/2$ and we can
choose $\delta=\lambda^{\beta_{j}+(1\wedge\gamma_{j})-1-1/p-2\tau}$.
Setting $\tilde{u}_{0}^{loc}:=\Lambda_{\lambda}u_{0}^{loc}$ and
\begin{align*}
\tilde{\xi}_{j} & :=\lambda^{1+1/p-\beta_{j}+\tau}\Lambda_{\lambda}\xi_{j},\quad\tilde{\phi}_{j}^{loc}:=\lambda^{1-(1\wedge\gamma_{j})+\tau}\Lambda_{\lambda}\phi_{j}^{loc},\quad\tilde{\sigma}_{j}:=\delta\sigma_{j},
\end{align*}
we obtain from (\ref{eq:uTilde1-1}) the dilated representation 
\begin{equation}
\tilde{u}:=\tilde{u}_{0}^{loc}+\sum_{j=1,2}\big(\tilde{\phi}_{j}^{loc}\ast(\tilde{\sigma}_{j}(\tilde{u})\tilde{\xi}_{j}\big).\label{eq:uTilde2-1}
\end{equation}
Owing to (\ref{eq:tildeXi-1}) and (\ref{eq:tildePhi-1}), we have
uniformly in $\lambda>0$
\[
\|\tilde{\xi}_{j}\|_{\beta_{j}-1,p,\infty}\lesssim\|\xi_{j}\|_{\beta_{j}-1,p,\infty},\qquad\|\tilde{\phi}_{j}^{loc}\|_{\gamma_{j},1,\infty}\lesssim\|\phi_{j}\|_{\gamma_{j},1,\infty},\qquad\|\tilde{u}_{0}^{loc}\|_{2\alpha,p/2,\infty}\lesssim\|u_{0}\|_{2\alpha,p/2,\infty}.
\]
We may now choose $\lambda$ and thus $\delta$ sufficiently small
such that Theorem~\ref{thm:solve} applies to (\ref{eq:uTilde2-1})
when $\gamma_{j}$ and $\alpha$ are replaced by $\tilde{\gamma}_{j}:=\gamma_{j}-\epsilon$
and $\tilde{\alpha}:=\alpha-\epsilon=\beta_{1}+\tilde{\gamma}_{1}-1$,
respectively, for some sufficiently small $\epsilon>0$. Since $\|\tilde{\phi}_{j}^{loc}\|_{\tilde{\gamma}_{j},1,\infty}\lesssim\|\tilde{\phi}_{j}^{loc}\|_{\gamma_{j},1,\infty}\lesssim\|\phi_{j}\|_{\gamma_{j},1,\infty}$,
it only remains to verify bounds for $\|(\cdot-r_{j})\tilde{\phi}_{j}^{loc}\|_{\tilde{\gamma}_{j}+1,1,\infty}$
and $\|\pi(\tilde{\phi}_{1}^{loc}\ast\tilde{\xi}_{1},\tilde{\xi}_{1})\|_{\tilde{\alpha}+\beta_{1}-1,p/2,\infty}$
uniformly in $\lambda$. Setting $r_{j}=0$ without loss of generality,
we obtain from Lemma~\ref{lem:dialation} for $\gamma'=1\wedge\gamma_{j}-\tau/2$
\begin{align*}
\|x\tilde{\phi}_{j}^{loc}(x)\|_{\tilde{\gamma}_{j}+1,1,\infty} & =\lambda^{-(1\wedge\gamma_{j})+\tau}\|\chi(x)\chi(x/2)\Lambda_{\lambda}(x\phi_{j}(x))\|_{\tilde{\gamma}_{j}+1,1,\infty}\\
 & \lesssim\lambda^{\gamma'-(1\wedge\gamma_{j})+\tau}|\log\lambda|\big(\|x\phi_{j}(x)\|_{\gamma_{j}+1,1,\infty}+\|\phi_{j}\|_{\gamma_{j},1,\infty}\big).
\end{align*}
Moreover, we have due to \citep[Lem.~2.3]{Promel2015}, Lemma~\ref{lem:scaling resonant term}
and $\alpha+\beta_{1}<1$:
\begin{align}
\begin{split} & \|\pi(\tilde{\phi}_{1}^{loc}\ast\tilde{\xi}_{1},\tilde{\xi}_{1})\|_{\tilde{\alpha}+\beta_{1}-1,p/2,\infty}\\
 & \quad=\lambda^{2+2/p-2\beta_{1}-(\gamma_{1}\wedge1)+3\tau}\|\pi(\Lambda_{\lambda}(\phi_{1}^{loc,\lambda}\ast\xi_{1}),\Lambda_{\lambda}\xi_{1})\|_{\tilde{\alpha}+\beta_{1}-1,p/2,\infty}\\
 & \quad\leq\lambda^{2+2/p-2\beta_{1}-(\gamma_{1}\wedge1)+3\tau}\big(\|\Lambda_{\lambda}(\pi(\phi_{1}^{loc,\lambda}\ast\xi_{1}),\xi_{1}))\|_{\tilde{\alpha}+\beta_{1}-1,p/2,\infty}\\
 & \qquad+\|\pi(\Lambda_{\lambda}(\phi_{1}^{loc,\lambda}\ast\xi_{1}),\Lambda_{\lambda}\xi_{1})-\Lambda_{\lambda}(\pi(\phi_{1}^{loc,\lambda}\ast\xi_{1},\xi_{1}))\|_{\tilde{\alpha}+\beta_{1}-1,p/2,\infty}\big)\\
 & \quad\lesssim\lambda^{\tilde{\alpha}+1-\beta_{1}-(\gamma_{1}\wedge1)+3\tau}|\log\lambda|\|\pi(\phi_{1}^{loc,\lambda}\ast\xi_{1},\xi_{1})\|_{\tilde{\alpha}+\beta_{1}-1,p/2,\infty}\\
 & \qquad+\lambda^{\tilde{\alpha}+1-\beta_{1}-(\gamma_{1}\wedge1)+3\tau}\|\phi_{1}^{loc,\lambda}\ast\xi_{1}\|_{\tilde{\alpha},p,\infty}\|\xi_{1}\|_{\beta_{1}-1,p,\infty}\\
 & \qquad+\lambda^{2+2/p-2\beta_{1}-(\gamma_{1}\wedge1)+3\tau}\|\Lambda_{\lambda}(\phi_{1}^{loc,\lambda}\ast\xi_{1})\|_{\tilde{\alpha},p,\infty}\|\Lambda_{\lambda}\xi_{1}\|_{\beta_{1}-1,p,\infty}.
\end{split}
{\color{red}}\label{eq:locRes}
\end{align}
The last two terms in (\ref{eq:locRes}) can be bounded by Young's
inequality
\begin{align*}
\lambda^{\tilde{\alpha}+1-\beta_{1}-(\gamma_{1}\wedge1)+3\tau}\|\phi_{1}^{loc,\lambda}\ast\xi_{1}\|_{\tilde{\alpha},p,\infty}\|\xi_{1}\|_{\beta_{1}-1,p,\infty} & \lesssim\lambda^{3\tau}\|\phi_{1}^{loc,\lambda}\|_{\tilde{\gamma},p,\infty}\|\xi_{1}\|_{\beta_{1}-1,p,\infty}^{2}
\end{align*}
and, in combination with \citep[Lem.~2.3]{Promel2015} and Lemma~(\ref{lem:dialation})
for $\epsilon<\tau$,
\begin{align*}
 & \lambda^{2+2/p-2\beta_{1}-(\gamma_{1}\wedge1)+3\tau}\|\Lambda_{\lambda}(\phi_{1}^{loc,\lambda}\ast\xi_{1})\|_{\tilde{\alpha},p,\infty}\|\Lambda_{\lambda}\xi_{1}\|_{\beta_{1}-1,p,\infty}\\
 & \quad\lesssim\lambda^{3+2/p-2\beta_{1}-(\gamma_{1}\wedge1)+3\tau}\|\Lambda_{\lambda}\phi_{1}^{loc,\lambda}\|_{\tilde{\gamma},1,\infty}\|\Lambda_{\lambda}\xi_{1}\|_{\beta_{1}-1,p,\infty}^{2}\\
 & \quad\le\lambda^{\tau}|\log\lambda|^{3}\|\phi_{1}^{loc,\lambda}\|_{\tilde{\gamma},1,\infty}\|\xi_{1}\|_{\beta_{1}-1,p,\infty}^{2}.
\end{align*}
Choosing $q,q^{\prime}\in[1,\infty)$ such that $\frac{1}{q^{\prime}}+\frac{1}{q}=1$
and $\gamma_{1}>\frac{1}{q}>\tilde{\gamma}_{1}$, we observe 
\begin{align}
\begin{split} & \|\phi_{1}^{loc,\lambda}\|_{\tilde{\gamma},1,\infty}\\
 & \quad\lesssim\|T_{\phi_{1}}\chi(\lambda^{-1}\cdot)\|_{\tilde{\gamma}_{1},1,\infty}+\|T_{\chi(\lambda^{-1}\cdot)}\phi_{1}\|_{\tilde{\gamma}_{1},1,\infty}+\|\pi(\phi_{1},\chi(\lambda^{-1}\cdot))\|_{\tilde{\gamma}_{1},1,\infty}\\
 & \quad\lesssim\|\phi_{1}\|_{L^{q^{\prime}}}\|\chi(\lambda^{-1}\cdot)\|_{\tilde{\gamma}_{1},q,\infty}+\|\chi(\lambda^{-1}\cdot)\|_{L^{\infty}}\|\phi_{1}\|_{\tilde{\gamma}_{1},1,\infty}+\|\phi_{1}\|_{\tilde{\gamma}_{1},1,\infty}\|\chi(\lambda^{-1}\cdot)\|_{\beta_{1}-1,\infty,\infty}\\
 & \quad\lesssim\|\phi_{1}\|_{\gamma_{1},1,\infty}(1+\lambda^{-\tilde{\gamma}_{1}}|\log\lambda^{-1}|)\lambda^{\frac{1}{q}}\|\chi\|_{\tilde{\gamma}_{1},q,\infty}+\|\chi\|_{L^{\infty}}\|\phi_{1}\|_{\gamma_{1},1,\infty}\\
 & \qquad+\|\phi_{1}\|_{\gamma_{1},1,\infty}(1+\lambda^{1-\beta_{1}}|\log\lambda^{-1}|)\|\chi\|_{\beta_{1}-1,\infty,\infty},
\end{split}
\label{eq:estimate phi local}
\end{align}
where we applied Bony's decomposition, \citep[Lem.~2.3]{Promel2015}
and Besov embeddings. Hence,
\[
\|\phi_{1}^{loc,\lambda}\|_{\tilde{\gamma},1,\infty}\lesssim\|\phi_{1}\|_{\gamma_{1},1,\infty}
\]
and we can estimate (\ref{eq:locRes}) by
\begin{align}
\begin{split} & \|\pi(\tilde{\phi}_{1}^{loc}\ast\tilde{\xi}_{1},\tilde{\xi}_{1})\|_{\tilde{\alpha}+\beta_{1}-1,p/2,\infty}\\
 & \quad\lesssim\lambda^{\tilde{\alpha}+1-\beta_{1}-(\gamma_{1}\wedge1)+3\tau}|\log\lambda|\|\pi(\phi_{1}^{loc,\lambda}\ast\xi_{1},\xi_{1})\|_{\tilde{\alpha}+\beta_{1}-1,p/2,\infty}+\|\phi_{1}\|_{\tilde{\gamma},p,\infty}\|\xi_{1}\|_{\beta_{1}-1,p,\infty}^{2}\\
 & \quad\lesssim\|\pi(\phi_{1}\ast\xi_{1},\xi_{1})\|_{\alpha+\beta_{1}-1,p/2,\infty}+\big\|\pi\big(((1-\Lambda_{1/\lambda}\chi)\phi_{1})\ast\xi_{1},\xi_{1}\big)\big\|_{\tilde{\alpha}+\beta_{1}-1,p/2,\infty}\\
 & \qquad+\|\phi_{1}\|_{\tilde{\gamma},p,\infty}\|\xi_{1}\|_{\beta_{1}-1,p,\infty}^{2}.
\end{split}
\label{eq:locRes 2}
\end{align}
It remains to estimate the term $\pi\big(((1-\Lambda_{1/\lambda}\chi)\phi_{1})\ast\xi_{1},\xi_{1}\big)$
since the other terms can be seen to be uniformly bounded in $\lambda\in(0,1]$
keeping in mind~(\ref{eq:estimate phi local}). We use that the potential
irregularity of $\phi_{1}$ at the origin is smoothed out. Setting
$\epsilon':=(1-\alpha-\beta_{1})+\epsilon$ such that $\epsilon-2(\beta_{1}-1)=\gamma+\epsilon'$,
we can bound
\begin{align*}
\big\|\pi\big(((1-\Lambda_{1/\lambda}\chi)\phi_{1})\ast\xi_{1},\xi_{1}\big)\big\|_{\tilde{\alpha}+\beta_{1}-1,p/2,\infty} & \lesssim\big\|\pi\big(((1-\Lambda_{1/\lambda}\chi)\phi_{1})\ast\xi_{1},\xi_{1}\big)\big\|_{\epsilon,p/2,\infty}\\
 & \lesssim\|((1-\Lambda_{1/\lambda}\chi)\phi_{1})\ast\xi_{1}\|_{\epsilon-\beta_{1}+1,p,\infty}\|\xi_{1}\|_{\beta_{1}-1,p,\infty}\\
 & \lesssim\|((1-\Lambda_{1/\lambda}\chi)\phi_{1})\|_{\gamma+\epsilon',1,\infty}\|\xi_{1}\|_{\beta_{1}-1,p,\infty}^{2}.
\end{align*}
We will now use the kernel assumption $\phi_{1}(x)=x^{-s}\psi(x)\ind_{[0,\infty)}(x)$.
According to \citep[Corollary~2.9.3]{triebel2010} and the proof of
\citep[Theorem~2.9.1]{triebel2010}, the extension operator 
\begin{align*}
S_{0}\colon\{f\in\mathcal{B}_{p,\infty}^{\delta}(\R_{+}):f(0)=0\} & \to\mathcal{B}_{p,\infty}^{\delta}(\R),\quad f\mapsto\tilde{f}(x):=\begin{cases}
f(x), & x\ge0\\
0, & x<0
\end{cases},
\end{align*}
is bounded and linear if $\frac{1}{p}<\delta<\frac{1}{p}+1$. In particular,
for any function $f\in\mathcal{B}_{p,\infty}^{\delta}$ with $f(0)=0$
we conclude with restriction $f|_{\R_{+}}$ to $\R_{+}$ that
\begin{align}
\begin{split}\|f\ind_{[0,\infty)}\|_{\delta,p,q} & \lesssim\|f|_{\R_{+}}\|_{\mathcal{B}_{p,\infty}^{\delta}(\R_{+})}\\
 & =\inf\big\{\|g\|_{\delta,p,\infty}:g\in\mathcal{B}_{p,\infty}^{\delta},g(x)=f(x)\,\forall x\ge0\big\}\le\|f\|_{\delta,p,\infty}.
\end{split}
\label{eq:extension}
\end{align}
Since $\chi$ is constant one in a neighbourhood of the origin, we
may apply (\ref{eq:extension}) to $f(x)=(1-\chi(\lambda^{-1}x))x^{-s}\psi(x)$
and any $\delta\in((\gamma+\epsilon')\vee1,2)$. Together with Lemma~\ref{lem:dialation}
we obtain for $\epsilon''\in(s,1)$
\begin{align*}
\|(1-\chi(\lambda^{-1}\cdot))\phi\|_{\gamma+\epsilon',1,\infty} & \lesssim\Big\|\frac{1-\chi(x/\lambda)}{x^{s}}\psi\Big\|_{\delta,1,\infty}=\lambda^{-s}\Big\|\frac{1-\chi(\lambda^{-1}x)}{(x/\lambda)^{s}}\psi\Big\|_{\delta,1,\infty}\\
 & \lesssim\lambda^{\epsilon''-s}|\log\lambda|\|x^{-s}(1-\chi(x))\|_{\delta,1,\infty}\|\psi\|_{\delta,1/(1-\epsilon''),\infty}\\
 & \lesssim\|x^{-s}(1-\chi(x))\|_{\delta,1,\infty}\|\psi\|_{\delta+\epsilon'',1,\infty}.
\end{align*}
In combination with (\ref{eq:locRes 2}), we observe a uniform bound
for $\|\pi(\tilde{\phi}_{1}^{loc}\ast\tilde{\xi}_{1},\tilde{\xi}_{1})\|_{\tilde{\alpha}+\beta_{1}-1,p/2,\infty}$
which concludes the proof. 
\end{proof}
\begin{rem}
\label{rem:local soluton} Note that under the support assumptions
$\supp\phi_{i}\subset[0,\infty)$ and $\supp\xi_{i}\subset[0,\infty)$
for $i=1,2$ the solution of the localized equation~(\ref{eq:convLoc-1})
coincide with the solution of the original Volterra equation~(\ref{eq:convol})
on a small time horizon, provided the initial condition is, e.g.,
a constant or has sufficiently small support. Based on this observation,
one can iteratively solve the Volterra equation~(\ref{eq:convol})
in order to obtain a global solution using a classical pasting argument.
In case of Volterra equations this procedure will require carefully
chosen support conditions on the kernel functions and the noise terms.
In the special case of classical rough differential equations (which
corresponds to $\phi_{1}=\phi_{2}=\ind_{[0,\infty)}$, see Subsection~\ref{subsec: rough differential equation})
such procedure was carried out in, e.g., \citep{Gubinelli2015} and~\citep{Promel2015}.
\end{rem}

\begin{rem}
The assumption~$(v)$ on the kernel $\phi_{1}$ is fairly flexible
and covers many typical applications. For $s=0$ we may replace $\ind_{(0,\infty)}$
by $\ind_{[0,\infty)}$ and we obtain a class of regular kernels $\phi_{1}=\ind_{[0,\infty)}\psi$
for some\textcolor{blue}{{} }$\psi\in\mathcal{B}_{1,\infty}^{\delta}$,
$\delta\in(1\vee(\gamma_{1}+1-\alpha-\beta_{1}),\gamma_{1}+\alpha)$,
(setting $r_{1}=0$ for simplicity). In this case the singularity
at $0$ is not more severe than a jump such that we recover many features
of ordinary rough differential equations, especially $\gamma_{1}=1$.
The condition $\psi\in\mathcal{B}_{1,\infty}^{\delta}$ is quite weak
and includes, for instance, the kernels studied in \citep{Deya2011}
where $\psi\in C^{3}$. On the one hand $\delta$ has to be larger
than $\gamma$ such that $\psi$ is more regular than $\phi_{1}$
itself and on the other hand $\delta>1$ ensures that $\psi$ is continuous.
For $s>0$ and $\psi(0)\neq0$ the kernel is singular. Note that the
degree of the singularity is constrained by the regularity assumption
$\phi_{1}\in\mathcal{B}_{1,\infty}^{\gamma_{1}}$ implying $s\le1-\gamma_{1}$.
For example, if $\xi_{1}$ is white noise, then $\alpha>1/3$ implies
$\gamma>5/6$ such that we require $s\in[0,1/6)$. For further examples
we refer to Section~\ref{sec:examples}.
\end{rem}

\begin{rem}
\label{rem:smallKernel}More generally, for singular kernels $\phi_{1}$
which do not satisfy assumption~$(v)$, a uniform bound (in~$\lambda$)
of the localised resonant term $\|\pi(\phi_{1}^{loc,\lambda}\ast\xi_{1},\xi_{1})\|_{\alpha+\beta_{1}-1-\epsilon,p/2,\infty}$
from~(\ref{eq:locRes}) could be directly assumed. Indeed, we will
see in the stochastic construction below (see the proof of Theorem~\ref{thm: contruction resonant term})
that this resonant term is typically of order $\|\phi_{1}^{loc,\lambda}\|_{\gamma,1,\infty}$,
which can be bounded by Lemma~\ref{lem:dialation} as
\begin{align*}
\|\phi_{1}^{loc,\lambda}\|_{\gamma-\epsilon,1,\infty} & =\|\chi(\lambda^{-1}\cdot)\phi_{1}\|_{\gamma-\epsilon,1,\infty}\\
 & \lesssim\lambda^{\epsilon}|\log\lambda|\|\chi\|_{\gamma-\epsilon,1,\infty}\|\phi_{1}\|_{\gamma-\epsilon,1/(1-\epsilon),\infty}\lesssim\lambda^{\varepsilon/2}\|\chi\|_{\gamma,1,\infty}\|\phi_{1}\|_{\gamma,1,\infty}.
\end{align*}
Note that the last estimate is arbitrary small for sufficiently small
$\lambda$, Theorem~\ref{thm:solve} can then be directly applied
to the localised equation~(\ref{eq:convLoc-1}) without an additional
scaling argument if $\gamma_{1},\gamma_{2}<1$.
\end{rem}

\section{The resonant term\label{sec:resonant term}}

In order to apply the existence and unique results provided in Section~\ref{sec:convolution}
to stochastic Volterra equations, it is often necessary to construct
the resonant term $\text{\ensuremath{\pi}(\ensuremath{\phi\ast\xi},\ensuremath{\xi})}$
for the driving stochastic processes. In the case of regular kernels~$\phi\in\mathcal{B}_{1,\infty}^{1}$,
the existence of the resonant term $\text{\ensuremath{\pi}(\ensuremath{\phi\ast\xi},\ensuremath{\xi})}$
is equivalent to the existence of the classical rough path, see Subsection~\ref{subsec:rough path}.
However, for singular kernels $\phi\in\mathcal{B}_{1,\infty}^{\delta}$
with $\delta<1$ this equivalence does not hold anymore and it is
necessary to include the kernel~$\phi$ in the definition of the
`'rough path'{}', see Example~\ref{ex: counter example}. Therefore,
we provide a probabilistic construction of convolutional rough paths
for a wide class of Gaussian processes in Subsection~\ref{subsec:construction resonant term}.

\subsection{Relation to rough path theory\label{subsec:rough path}}

For a regular kernel $\phi=\ind_{[0,\infty)}\psi$ and a rough signal
$\xi$ the resonant term $\pi(\phi*\xi,\xi)$ can be reduced to the
resonant term $\pi(\ind_{[0,\infty)}*\xi,\xi)=\pi(\int_{-\infty}^{t}\d\xi(s),\xi)$
between $\xi$ and its anti-derivative. The latter corresponds to
the classical rough path integral, cf. \citep{Gubinelli2015}. Considering
the Volterra equation on some bounded time interval, we may use $\pi((\ind_{[0,\infty)}\chi)*\xi,\xi)$
instead of $\pi(\ind_{[0,\infty)}*\xi,\xi)$ where $\chi$ is some
smooth compactly supported function being constant one in a neighbourhood
of the origin. Note that $\chi$ only ensures integrability of the
kernel, while the characteristic regularity properties of $\ind_{[0,\infty)}$
are preserved. In particular, the (weak) derivative of $(\ind_{[0,\infty)}\chi)*\xi$
is $\xi$ up to some additional smooth remainder.
\begin{lem}
\label{lem:roughpath} Let $\xi\in\mathcal{B}_{p,\infty}^{\beta-1}$
for $\beta>0$, $p\in[2,\infty]$ and $(\xi^{n})_{n}\subset\mathcal{S}$
be such that $\xi^{n}\to\xi$ in $\mathcal{B}_{p,\infty}^{\beta-1}$
as $n\to\infty$. Suppose that $\chi\in C^{\infty}$ is a smooth compactly
supported function with $\chi(0)=1$ and $\phi:=\psi\ind_{[0,\infty)}\in\mathcal{B}_{1,\infty}^{1}$
for some $\psi\in\mathcal{B}_{1,\infty}^{\delta}$ with $\delta\in(1\vee2(1-\beta),2)$
and $\psi(0)\neq0$. Then, $\pi(\phi\ast\xi,\xi):=\lim_{n\to\infty}\pi(\phi\ast\xi^{n},\xi^{n})$
exists in $\mathcal{B}_{p/2,\infty}^{2\beta-1}$ if and only if $\pi\big((\ind_{[0,\infty)}\chi)\ast\xi,\xi\big):=\lim_{n\to\infty}\pi\big((\ind_{[0,\infty)}\chi)\ast\xi^{n},\xi^{n}\big)$
exists in $\mathcal{B}_{p/2,\infty}^{2\beta-1}$. In this case, one
has
\begin{align*}
\pi(\phi\ast\xi,\xi)-\phi(0)\pi\big((\ind_{[0,\infty)}\chi)\ast\xi,\xi\big) & \in\mathcal{B}_{p/2,\infty}^{\delta-2(1-\beta)}.
\end{align*}
\end{lem}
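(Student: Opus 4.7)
The plan is to reduce the question about $\pi(\phi*\xi,\xi)$ to the one about $\pi((\ind_{[0,\infty)}\chi)*\xi,\xi)$ by showing that their difference lives in a strictly smaller Besov space. Concretely, since $\phi(0)=\psi(0)$, I would write
\[
\pi(\phi*\xi^{n},\xi^{n}) - \phi(0)\,\pi((\ind_{[0,\infty)}\chi)*\xi^{n},\xi^{n}) = \pi(g*\xi^{n},\xi^{n}),
\qquad g := (\psi - \psi(0)\chi)\ind_{[0,\infty)},
\]
and prove that the right-hand side is well-defined and uniformly controlled in $\mathcal{B}_{p/2,\infty}^{\delta-2(1-\beta)}$. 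Once this is achieved, the embedding $\mathcal{B}_{p/2,\infty}^{\delta-2(1-\beta)}\subset\mathcal{B}_{p/2,\infty}^{2\beta-1}$, which is valid because $\delta-2(1-\beta)-(2\beta-1) = \delta-1 > 0$, immediately yields the equivalence of the two limits in $\mathcal{B}_{p/2,\infty}^{2\beta-1}$ together with the claimed residual regularity.

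The heart of the argument is to establish the Besov regularity of the auxiliary kernel $g$. The crucial observation is that $(\psi-\psi(0)\chi)(0)=0$, so the jump at the origin that would a priori come from multiplying by $\ind_{[0,\infty)}$ is suppressed. Since $\psi\in\mathcal{B}_{1,\infty}^{\delta}$ and $\chi\in C^{\infty}$ is compactly supported, $\psi-\psi(0)\chi\in\mathcal{B}_{1,\infty}^{\delta}$. For $\delta\in(1,2)$, the zero-trace extension operator $S_{0}$ recalled around~(\ref{eq:extension}) in the proof of Theorem~\ref{thm:localSol} applies with $p=1$ (so the range condition $1/p<\delta<1/p+1$ is satisfied) and gives
\[
g\in\mathcal{B}_{1,\infty}^{\delta} \quad\text{with}\quad \|g\|_{\delta,1,\infty}\lesssim\|\psi\|_{\delta,1,\infty}+|\psi(0)|\,\|\chi\|_{\delta,1,\infty}.
\]
Young's inequality (Lemma~\ref{lem:Convolution}) then gives $g*\xi^{n}\in\mathcal{B}_{p,\infty}^{\delta+\beta-1}$, and since $(\delta+\beta-1)+(\beta-1)=\delta-2(1-\beta)>0$ by assumption, Bony's resonant estimate (Lemma~\ref{lem:paraproduct}(iii)) produces the key bound
\[
\|\pi(g*\xi^{n},\xi^{n})\|_{\delta-2(1-\beta),\,p/2,\,\infty}\lesssim\|g\|_{\delta,1,\infty}\,\|\xi^{n}\|_{\beta-1,p,\infty}^{2}.
\]

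Exactly the same two estimates applied to differences, together with the bilinearity of $(f_{1},f_{2})\mapsto\pi(g*f_{1},f_{2})$, show that $\pi(g*\xi^{n},\xi^{n})$ is Cauchy in $\mathcal{B}_{p/2,\infty}^{\delta-2(1-\beta)}$ and hence converges to a limit which we denote $\pi(g*\xi,\xi)$ and which satisfies the regularity claim. Combined with the identity from the first paragraph, the convergence in $\mathcal{B}_{p/2,\infty}^{2\beta-1}$ of one of the two sequences forces the convergence of the other. The main technical obstacle is the verification that $g\in\mathcal{B}_{1,\infty}^{\delta}$: a brute multiplication with $\ind_{[0,\infty)}$ generally destroys regularity beyond $1$, and it is only the vanishing-at-zero property of $\psi-\psi(0)\chi$ that allows one to retain the full regularity of $\psi$ via the extension operator; all other steps reduce to the paraproduct and Young estimates already established earlier in the paper.
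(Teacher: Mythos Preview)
Your proof is correct and follows essentially the same route as the paper: both reduce to the auxiliary kernel $g=(\psi-\psi(0)\chi)\ind_{[0,\infty)}$, use the zero-trace extension operator~(\ref{eq:extension}) to obtain $g\in\mathcal{B}_{1,\infty}^{\delta}$, and then apply Young's inequality together with the resonant paraproduct estimate to control $\pi(g*\xi^{n},\xi^{n})$ in $\mathcal{B}_{p/2,\infty}^{\delta-2(1-\beta)}$. Your exposition is in fact slightly more explicit about the Cauchy-sequence argument than the paper's, but the ideas are identical.
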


\begin{proof}
Let $(\xi^{n})_{n}\subset\mathcal{S}$ be such that $\xi^{n}\to\xi$
in $\mathcal{B}_{p,\infty}^{\beta-1}$ and $\pi(\phi\ast\xi,\xi):=\lim_{n\to\infty}\pi(\phi\ast\xi^{n},\xi^{n})$
in $\mathcal{B}_{p/2,\infty}^{2\beta-1}$. We first observe that
\begin{align*}
 & \pi\big((\ind_{[0,\infty)}\chi)\ast\xi^{n},\xi^{n}\big)\\
 & \quad=\psi(0)^{-1}\pi(\phi\ast\xi^{n},\xi^{n})-\big(\psi(0)^{-1}\pi(\phi*\xi^{n},\xi^{n})-\pi\big((\ind_{[0,\infty)}\chi)\ast\xi^{n},\xi^{n}\big)\big).
\end{align*}
Since the first term converges by assumption, it is sufficient to
consider the other two. Setting $\epsilon:=\delta-2(1-\beta)>0$,
Bony's paraproduct estimates and the generalised Young inequality
yield
\begin{align*}
 & \big\|\pi(\phi\ast\xi^{n},\xi^{n})-\psi(0)\pi\big((\ind_{[0,\infty)}\chi)\ast\xi^{n},\xi^{n}\big)\big\|_{\epsilon,p/2,\infty}\\
 & \quad=\big\|\pi\big(\big((\psi-\psi(0)\chi)\ind_{[0,\infty)}\big)\ast\xi^{n},\xi^{n}\big)\big\|_{\epsilon,p/2,\infty}\\
 & \quad\lesssim\big\|\big((\psi-\psi(0)\chi)\ind_{[0,\infty)}\big)\ast\xi^{n}\big\|_{\epsilon-\beta_{1}+1,p,\infty}\|\xi^{n}\|_{\beta-1,p,\infty}\\
 & \quad\lesssim\big\|\big((\psi-\psi(0)\chi)\ind_{[0,\infty)}\big)\big\|_{\delta,1,\infty}\|\xi^{n}\|_{\beta-1,p,\infty}^{2}.
\end{align*}
Applying the estimate (\ref{eq:extension}) for the regularity $1<\delta<2$,
we obtain
\begin{align*}
\big\|\big((\psi-\psi(0)\chi)\ind_{[0,\infty)}\big)\big\|_{\delta,1,\infty} & \lesssim\|\psi-\psi(0)\chi\|_{\delta,1,\infty}\\
 & \le\|\psi\|_{\delta,1,\infty}+|\psi(0)|\|\chi\|_{\delta,1,\infty}\lesssim\big(1+\|\chi\|_{\delta,1,\infty}\big)\|\psi\|_{\delta,1,\infty}.
\end{align*}
As $\xi^{n}\to\xi$ in $\mathcal{B}_{p,\infty}^{\beta-1}$ and $\mathcal{B}_{p/2,\infty}^{\delta-2(1-\beta)}\subset\mathcal{B}_{p/2,\infty}^{2\beta-1}$
, this implies one direction of the assertion. The converse direction
follows analogously.
\end{proof}
\begin{rem}
For $\alpha+\beta_{1}<1$ the condition $\delta>2(1-\beta)=1-\alpha-\beta+\gamma>\gamma$
is in line with the regular case in Theorem~\ref{thm:localSol}.
Lemma~\ref{lem:roughpath} especially implies that for regular kernels
the results, developed in Section~\ref{sec:convolution} for convolutional
rough paths, can be applied to all stochastic processes which can
be enhanced to rough paths such as semi-martingales and various Gaussian
processes, cf. \citet{Friz2010}.
\end{rem}

While in the regular case the additional information can be reduced
to $\pi((\ind_{[0,\infty)}\chi)*\xi,\xi)$, the following example
illustrates that for singular Volterra equations it is indeed necessary
to include the kernel into the resonant term, i.e., it is not sufficient
to take only this ``classical'' resonant term into account.
\begin{example}
\label{ex: counter example} Consider the following $2$-dimensional
Volterra equation
\begin{align*}
u^{1} & =\phi*\xi^{1},\\
u^{2} & =\phi*(u^{1}\xi^{2})=\phi*\big((\phi*\xi^{1})\xi^{2}\big),
\end{align*}
with some singular kernel $\phi\in\mathcal{B}_{1,\infty}^{\gamma}$
for $\gamma\in(0,1)$ and $(\xi^{1},\xi^{2})\in\mathcal{B}_{p,\infty}^{\beta-1}$.
We notice that $\pi((\ind_{[0,\infty)}\chi)*\xi^{1},\xi^{2})\in\mathcal{B}_{p/2,\infty}^{2\beta-1}$
is well-defined if $2\beta>1$, but $\pi((\phi*\xi^{1}),\xi^{2})\in\mathcal{B}_{p/2,\infty}^{2\beta-2+\gamma}$
is not well-defined if $2\beta<2-\gamma$. Hence, for $\gamma<1$
and $1/2<\beta<1-\gamma/2$ the product $(\phi*\xi^{1})\xi^{2}$ is
not well-defined while the resonant term $\pi((\ind_{[0,\infty)}\chi)*\xi^{1},\xi^{2})$
gives no additional information.

In order to make the example more explicit, we set $\xi^{i}=\dd\theta^{i}$,
$i=1,2$, with $\theta^{i}=B_{H}^{i}\tilde{\chi}$ for fractional
Brownian motions $B_{H}^{i}$ with Hurst index $H\in(1/2,2/3)$ and
a compactly supported function $\tilde{\chi}\in C^{\infty}$. Moreover,
we choose the kernel $\phi(s)=s^{r-1}\ind_{(0,\infty)}(s)\tilde{\chi}(s)$
for $r\in(4/3-H,2-2H)$, which is associated to the fractional integration
operator of order $r$, cf. Section~\ref{sec:fractSDEs}. We then
have for any arbitrarily small $\epsilon>0$ that $(\theta^{1},\theta^{2})\in\mathcal{B}_{\infty,\infty}^{\beta}$
and $\phi\in\mathcal{B}_{1,\infty}^{\gamma}$ with $\beta=H-\epsilon$
and $\gamma=r-\epsilon$. By the choice of $r$ and $H$, we indeed
have $1/2<\beta<1-\gamma/2$, but also $\alpha:=\beta+\gamma-1>1/3$
and $2\alpha+\beta>1$ such that Theorem~\ref{thm:solve} is applicable. 
\end{example}

\subsection{Stochastic construction of the resonant term\label{subsec:construction resonant term}}

While Lemma~\ref{lem:roughpath} allows for the construction of the
resonant term $\pi(\phi\ast\xi,\xi)$ for a regular kernels~$\phi$
and a large class of noise processes~$\xi$ via rough path theory,
the aim of this section is to directly construct $\pi(\phi\ast\xi,\xi)$.
This is particularly interesting for singular kernels, but also gives
some deeper understanding on the interplay between the analytical
object $\pi(\phi\ast\xi,\xi)$ and the stochastic behaviour of $\xi$.
We investigate a class of stochastic processes admitting a series
expansion 
\begin{equation}
\xi_{t}=\sum_{n\ge1}a_{n}(t)\zeta_{n},\quad t\in\R,\label{eq:seriesExpansion}
\end{equation}
for coefficient processes $(a_{n})_{n\ge1}$ and random variables
$\zeta_{n}$, which are all defined on a joint probability space $(\Omega,\mathcal{F},\mathbb{P})$
with corresponding expectation operator $\mathbb{E}$. We will impose
the following assumptions: 
\begin{enumerate}
\item[(A)]  Let $(\zeta_{n})_{n\ge1}$ be a sequence of random variables satisfying
$\E[\zeta_{n}\zeta_{m}]=\ind_{\{n=m\}}$ and the following \emph{hypercontractivity}
property: For every $r\ge1$ there is a constant $C_{r}>0$ such that
for every polynomial $P\colon\R^{n}\to\R$ of degree $2$ we have
\[
\E\big[|P(\zeta_{1},\dots,\zeta_{n})|^{r}\big]\le C_{r}\E\big[|P(\zeta_{1},\dots,\zeta_{n})|^{2}\big]^{r/2}.
\]
\item[(B)]  Let $a_{n}\in\mathcal{B}_{p,1}^{\beta-1}$, $n\ge1$, for some $p\ge2$,
$\beta\in(0,1)$ such that $\sum_{n\ge1}\|a_{n}\|_{\beta-1,p,1}^{2}<\infty$.
\end{enumerate}
An important class of processes satisfying these assumptions are centred
Gaussian processes $\xi$ whose covariance operator can be represented
as an $L^{2}$-inner product, i.e., $\E[\xi_{s}\xi_{t}]=\langle f_{s},f_{t}\rangle$
for a class of functions $(f_{t})_{t\in\R}$. If we expand $f_{t}=\sum_{n}a_{n}(t)\psi_{n}$,
$a_{n}(t)=\langle f_{t},\psi_{n}\rangle$, with respect to some orthonormal
basis $(\psi_{n})$, we may obtain the representation (\ref{eq:seriesExpansion})
with i.i.d. standard normal $(\zeta_{n})$. Indeed, the distribution
of the finite dimensional distributions of the random series then
coincides with the original process by construction, such that in
general only tightness has additionally to be verified.
\begin{example}
~
\begin{enumerate}
\item Let $(B_{t})_{t\in[0,1]}$ be a Brownian motion. Its well-known Karhunen-Loève
expansion is given by
\[
B_{t}=\sqrt{2}\sum_{n=1}^{\infty}\frac{\sin\big((n-1/2)\pi t\big)}{(n-1/2)\pi}\zeta_{n},\qquad t\in[0,1],
\]
for i.i.d. $\zeta_{n}\sim\mathcal{N}(0,1)$. Using a periodic version
of Brownian motion, we may consider this series for all $t\in\R$.
Let $\xi=(\dd B)\chi$ be the distributional derivative multiplied
with a localising function $\chi\in L^{p}$. Then $\xi$ admits the
representation with (\ref{eq:seriesExpansion}) with $a_{n}(t)=\sqrt{2}\cos((n-1/2)\pi t)\chi(t)$.
Since $\|a_{n}\|_{\beta-1,p,1}$ is of the order $n^{\beta-1}$, Assumption
(B) is satisfied for all $\beta<1/2$.
\item \citet{Dzhaparidze2004} have proved the following series expansion
for the fractional Brownian motion $(X_{t})_{t\in[0,1]}$ with Hurst
index $H\in(0,1)$:
\[
X_{t}=\sum_{n=1}^{\infty}\frac{\sin(x_{n}t)}{x_{n}}\sigma_{n}\zeta_{n}+\sum_{n=1}^{\infty}\frac{1-\cos(y_{n}t)}{y_{n}}\tau_{n}\eta_{n},\qquad t\in[0,1],
\]
where $(\zeta_{n})_{n\ge1}$ and $(\eta_{n})_{n\ge1}$ are independent,
standard normal random variables, $x_{1}<x_{2}<\dots$ are the positive,
real zeros of the Bessel function $J_{-H}$ of the first kind of order
$-H$ and $y_{1}<y_{2}<\dots$ are the positive zeros of $J_{1-H}$.
Moreover, $\sigma_{n}^{2}=c_{H}x_{n}^{-2H}J_{1-H}^{-2}(x_{n})$ and
$\tau_{n}^{2}=c_{H}y_{n}^{-2H}J_{-H}^{-2}(y_{n})$ with some explicit
constant $c_{H}>0$ given in \citep{Dzhaparidze2004}.  $X_{t}$ can
be decomposed into a two-dimensional process with coordinates given
by the first and the second sum, respectively. As noise process $\xi$,
we again consider the localised derivative leading to (\ref{eq:seriesExpansion})
with $a_{n}(t)=(a_{n}^{(1)}(t),a_{n}^{(2)}(t))=(\sigma_{n}\cos(x_{n}t),\tau_{n}\sin(y_{n}))^{\top}\chi(t)$.
Noting the asymptotic expressions $\sigma_{n}^{2}\sim\tau_{n}^{2}\sim n^{1-2H}$
and $x_{n}\sim y_{n}\sim n$ for $n\to\infty$, cf. \citep{Dzhaparidze2004},
we obtain $\|a_{n}^{(1)}\|_{\beta-1,p,p}\sim\sigma_{n}x_{n}^{\beta-1}\sim n^{-1/2-H+\beta}$
and $\|a_{n}^{(2)}\|_{\beta-1,p,p}\sim n^{-1/2-H+\beta}$. We conclude
that Assumption (B) is fulfilled for $\beta<H$.
\end{enumerate}
\end{example}

Based on the assumption (A) and (B), we first verify the Besov regularity
of $\xi$.
\begin{lem}
\label{lem:regularity of gaussian process} Let $(\zeta_{n})_{n\ge1}$
and $(a_{n})_{n\ge1}$ fulfil Assumptions (A) and (B), respectively.
Then, there is a monotone integer valued sequence $(m_{n})\uparrow\infty$
such that the approximating sequence $\xi^{n}=\sum_{k=1}^{m_{n}}a_{k}(s)\zeta_{k}$
is almost surely a Cauchy sequence with respect to $\|\cdot\|_{\beta-1,p,\infty}$.
In particular, the almost sure and $L^{p}$-limit 
\[
\xi_{t}:=\lim_{n\to\infty}\xi_{t}^{n}=\sum_{n\ge1}a_{n}(t)\zeta_{n},\quad t\in\R,
\]
 is $\mathcal{B}_{p,\infty}^{\beta-1}$-regular. 
\end{lem}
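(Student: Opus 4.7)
The plan is to derive high-moment bounds for the Besov-norm increments of the partial sums $\xi^n$ via hypercontractivity, and then deduce almost-sure Cauchy convergence along a carefully chosen subsequence by a Borel-Cantelli argument. The main obstacle will be bridging the $\ell^2$-valued output of hypercontractivity with the $L^p$-based Besov norms, and this is precisely where the hypothesis $p\ge2$ enters through a double Minkowski swap.

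First, fix $j\ge-1$ and $m<n$ and note that $\Delta_j(\xi^n-\xi^m)(x)=\sum_{k=m+1}^n\Delta_j a_k(x)\,\zeta_k$ is a degree-one (hence degree $\le 2$) polynomial in the $\zeta_k$. Applying Assumption~(A) pointwise in $x$ gives
\begin{equation*}
\E\bigl[|\Delta_j(\xi^n-\xi^m)(x)|^p\bigr]\le C_p\Bigl(\sum_{k=m+1}^n|\Delta_j a_k(x)|^2\Bigr)^{p/2},
\end{equation*}
and Minkowski's inequality in $L^{p/2}(\R)$ (admissible since $p/2\ge1$) allows the spatial integration to be pulled inside the $\ell^2_k$-sum to yield
\begin{equation*}
\E\bigl[\|\Delta_j(\xi^n-\xi^m)\|_{L^p}^p\bigr]\lesssim\Bigl(\sum_{k=m+1}^n\|\Delta_j a_k\|_{L^p}^2\Bigr)^{p/2}.
\end{equation*}
Multiplying by $2^{jp(\beta-1)}$, summing in $j$, and invoking the discrete Minkowski inequality to exchange $\ell^2_k$ and $\ell^p_j$ (again requiring $p\ge2$) then produces
\begin{equation*}
\E\bigl[\|\xi^n-\xi^m\|_{\beta-1,p,p}^p\bigr]\lesssim\Bigl(\sum_{k=m+1}^n\|a_k\|_{\beta-1,p,p}^2\Bigr)^{p/2}\le\Bigl(\sum_{k=m+1}^n\|a_k\|_{\beta-1,p,1}^2\Bigr)^{p/2},
\end{equation*}
where the last estimate invokes the embedding $\mathcal{B}^{\beta-1}_{p,1}\hookrightarrow\mathcal{B}^{\beta-1}_{p,p}$ coming from $\ell^1\subset\ell^p$. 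Assumption~(B) ensures that the right-hand tail vanishes as $m\to\infty$.

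Finally, selecting an integer sequence $m_n\uparrow\infty$ so that $\sum_{k>m_n}\|a_k\|_{\beta-1,p,1}^2\le4^{-n}$ yields $\E[\|\xi^{m_{n+1}}-\xi^{m_n}\|_{\beta-1,p,p}^p]\lesssim 2^{-np}$. Chebyshev's inequality then produces summable probabilities $\P(\|\xi^{m_{n+1}}-\xi^{m_n}\|_{\beta-1,p,p}>2^{-n/2})\lesssim 2^{-np/2}$, and the Borel-Cantelli lemma shows that $(\xi^{m_n})_n$ is almost surely Cauchy in the Banach space $\mathcal{B}^{\beta-1}_{p,p}$; the uniform moment bound forces the almost-sure and $L^p(\Omega)$-limits to coincide, and the continuous embedding $\mathcal{B}^{\beta-1}_{p,p}\hookrightarrow\mathcal{B}^{\beta-1}_{p,\infty}$ transfers both modes of convergence to the target space, giving $\xi\in\mathcal{B}^{\beta-1}_{p,\infty}$ almost surely.
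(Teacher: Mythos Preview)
Your proof is correct and reaches the same conclusion as the paper, but via a cleaner route. The paper works directly in $\mathcal{B}_{p,\infty}^{\beta-1}$: it replaces the supremum over $j$ by a sum (union bound), applies Markov and hypercontractivity blockwise, and then controls $\int(\sum_k(\Delta_j a_k)^2)^{p/2}$ through a weighted H\"older inequality with weights $c_k=\|a_k\|_{\beta-1,p,1}^2$, eventually arriving at $(d_n/b_n)^p$ with $d_n=(\sum_{k>m_n}c_k)^{1/2}$. You instead target the stronger norm $\|\cdot\|_{\beta-1,p,p}$, where the double Minkowski swap (first in $L^{p/2}(\R)$, then in $\ell^{p/2}_j$) replaces the weighted-H\"older trick entirely and gives the clean bound $\E[\|\xi^n-\xi^m\|_{\beta-1,p,p}^p]\lesssim(\sum_k\|a_k\|_{\beta-1,p,p}^2)^{p/2}$; the final embedding $\mathcal{B}_{p,p}^{\beta-1}\hookrightarrow\mathcal{B}_{p,\infty}^{\beta-1}$ is free. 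Your argument is more elementary and makes the role of $p\ge2$ completely transparent; the paper's argument stays in the target space throughout but at the cost of the auxiliary weight sequence. One minor point: like the paper, your computation tacitly assumes $p<\infty$; the case $p=\infty$ requires the same reduction the paper invokes (prove the result for large finite $p'$ and embed $\mathcal{B}_{p',\infty}^{\beta-1}\subset\mathcal{B}_{\infty,\infty}^{\beta-1-1/p'}$).
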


\begin{proof}
We set $m_{0}=1$ and
\begin{equation}
m_{n}:=\inf\Big\{ K\ge m_{n-1}:\sum_{k=K+1}^{\infty}\|a_{k}\|_{\beta-1,p,p}^{2}\le n^{-6}\Big\},\qquad n\ge1.\label{eq:mn}
\end{equation}
It is sufficient to show 
\begin{equation}
\sum_{n\ge1}\P\Big(\|\xi^{n+1}-\xi^{n}\|_{\beta-1,p,\infty}>b_{n}\Big)<\infty\label{eq:borelCantelli}
\end{equation}
for some sequence $(b_{n})\in\ell^{1}$. Then, the Borel-Cantelli
Lemma yields that for almost every $\omega\in\Omega$ there is some
$n(\omega)\ge1$ such that $\|\xi^{m+1}-\xi^{m}\|_{\beta-1,p,\infty}\le b_{m}$
for all $m\ge n(\omega)$. Since $b_{m}$ is summable, $(\xi^{n})_{n\ge1}$
is almost surely a Cauchy sequence converging to $\xi\in\mathcal{B}_{p,\infty}^{\beta-1}$.
Moreover, it suffices to consider $p<\infty$ due to the embedding
$\mathcal{B}_{p,\infty}^{\beta-1}\subset\mathcal{B}_{\infty,\infty}^{\beta-1-1/p}$,
which is sufficient if $p$ is chosen large enough.

We now verify (\ref{eq:borelCantelli}). By definition we have
\begin{align*}
\|\xi^{n+1}-\xi^{n}\|_{\beta-1,p,\infty} & =\bigg\|\sum_{k=m_{n}+1}^{m_{n+1}}\zeta_{k}a_{k}\bigg\|_{\beta-1,p,\infty}\\
 & =\sup_{j\ge-1}\bigg(2^{(\beta-1)j}\bigg\|\sum_{k=m_{n}+1}^{m_{n+1}}\zeta_{k}(\Delta_{j}a_{k})\bigg\|_{L^{p}}\bigg)\\
 & =\sup_{j\ge-1}\bigg(2^{(\beta-1)jp}\int_{\R}\bigg|\sum_{k=m_{n}+1}^{m_{n+1}}\zeta_{k}(\Delta_{j}a_{k})(x)\bigg|^{p}\d x\Big)^{1/p}.
\end{align*}
Hence, using an union bound for the supremum and Markov's inequality
we have
\begin{align}
\P\Big(\|\xi^{n+1}-\xi^{n}\|_{\beta-1,p,\infty}>b_{n}\Big) & \leq b_{n}^{-p}\sum_{j\in\N}2^{(\beta-1)jp}\int_{\R}\mathbb{E}\bigg[\bigg|\sum_{k=m_{n}+1}^{m_{n+1}}\zeta_{k}(\Delta_{j}a_{k})(x)\bigg|^{p}\bigg]\d x.\label{eq:regBB-1}
\end{align}
Using the hypercontractivity and $\E[\zeta_{n}\zeta_{m}]=\ind_{\{n=m\}}$,
we obtain the upper bound 
\begin{align*}
 & b_{n}^{-p}\sum_{j\in\N}2^{(\beta-1)jp}\int_{\R}\mathbb{E}\bigg[\bigg(\sum_{k=m_{n}+1}^{m_{n+1}}\zeta_{k}(\Delta_{j}a_{k})(x)\bigg)^{2}\bigg]^{\frac{p}{2}}\d x\\
 & \qquad\quad=b_{n}^{-p}\sum_{j\in\N}2^{(\beta-1)jp}\int_{\R}\bigg(\sum_{k=m_{n}+1}^{m_{n+1}}(\Delta_{j}a_{k})^{2}(x)\bigg)^{\frac{p}{2}}\d x.
\end{align*}
We now use Hölder's inequality to obtain for any sequence $(c_{k})\in\ell^{1}$
\begin{align*}
\P\Big(\|\xi^{n+1}-\xi^{n}\|_{\beta-1,p,\infty}>b_{n}\Big)\lesssim & b_{n}^{-p}\sum_{j\in\N}2^{(\beta-1)jp}\Big(\sum_{k=m_{n}+1}^{m_{n+1}}c_{k}\Big)^{p/2-1}\sum_{k=m_{n}+1}^{m_{n+1}}c_{k}^{-(p/2-1)}\|\Delta_{j}a_{k}\|_{L^{p}}^{p}\\
= & b_{n}^{-p}\Big(\sum_{k=m_{n}+1}^{m_{n+1}}c_{k}\Big)^{p/2-1}\sum_{k=2^{n}+1}^{2^{n+1}}c_{k}^{-(p/2-1)}\|a_{k}\|_{\beta-1,p,p}^{p}\\
\le & b_{n}^{-p}\Big(\sum_{k=m_{n}+1}^{m_{n+1}}c_{k}\Big)^{p/2}\Big(\sup_{k}c_{k}^{-1/2}\|a_{k}\|_{\beta-1,p,p}\Big)^{p}.
\end{align*}
Choosing $c_{k}:=\|a_{k}\|_{\beta-1,p,1}^{2}\ge\|a_{k}\|_{\beta-1,p,p}^{2}$,
it remains to note that $d_{n}:=(\sum_{k=m_{n}+1}^{m_{n+1}}c_{k})^{1/2}\le n^{-3}$
by the choice of $m_{n}$, such that we may choose $b_{n}=n^{-3/2}$.
\end{proof}
Young's inequality (Lemma~\ref{lem:Convolution}) yields automatically
$\phi\ast\xi\in\mathcal{B}_{p,\infty}^{\gamma+\beta-1}$ for $\phi\in\mathcal{B}_{1,\infty}^{\gamma}$.
With these preperations we can verify the existence of a limit $\lim_{n\to\infty}\pi(\phi\ast\xi^{n},\xi^{n})=:\pi(\phi\ast\xi,\xi)\in\mathcal{B}_{p/2,\infty}^{2\beta+\gamma-2}$. 
\begin{thm}
\label{thm: contruction resonant term} Let $(\zeta_{n})_{n\ge1}$
and $(a_{n})_{n\ge1}$ fulfil Assumptions (A) and (B), $p\ge4$ and
$\gamma>0$. Further, suppose that $(\zeta_{n})$ are independent,
and $\phi\in\mathcal{B}_{1,\infty}^{\gamma}$. Set $\xi^{n}:=\sum_{k=1}^{m_{n}}a_{k}\zeta_{k}$
for a sufficiently fast growing integer valued sequence $(m_{n})\uparrow\infty$.
Then $(\pi(\phi\ast\xi^{n},\xi^{n}))_{n\ge1}$ is almost surely a
Cauchy sequence with respect to $\|\cdot\|_{2\beta+\gamma-2,p/2,\infty}$
with almost sure and $L^{p/2}$-limit 
\[
\pi(\phi\ast\xi,\xi):=\lim_{n\to\infty}\pi(\phi\ast\xi^{n},\xi^{n})\in\mathcal{B}_{p/2,\infty}^{2\beta+\gamma-2}.
\]
\end{thm}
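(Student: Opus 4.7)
The plan is to adapt the Borel--Cantelli argument from the proof of Lemma~\ref{lem:regularity of gaussian process} to the quadratic functional $F_{n}:=\pi(\phi\ast\xi^{n},\xi^{n})$. First I would choose an integer sequence $(m_{n})\uparrow\infty$ in the spirit of~\eqref{eq:mn}, so that the block sums $\sum_{k=m_{n}+1}^{m_{n+1}}\|a_{k}\|_{\beta-1,p,1}^{2}$ decay at an arbitrarily fast polynomial rate in $n$. With $\xi^{n}=\sum_{k=1}^{m_{n}}a_{k}\zeta_{k}$, the bilinearity of $\pi$ and of the convolution yield
\[
\Delta_{j}(F_{n+1}-F_{n})(x)=\sum_{(k,l)\in I_{n}} c_{k,l,j}(x)\,\zeta_{k}\zeta_{l},\qquad c_{k,l,j}:=\Delta_{j}\pi(\phi\ast a_{k},a_{l}),
\]
where $I_{n}\subset\{1,\dots,m_{n+1}\}^{2}$ collects those index pairs with at least one coordinate in the block $B_{n}:=\{m_{n}+1,\dots,m_{n+1}\}$. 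It then suffices to check that $\sum_{n}\P(\|F_{n+1}-F_{n}\|_{2\beta+\gamma-2,p/2,\infty}>b_{n})<\infty$ for some $(b_{n})\in\ell^{1}$.

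At fixed $x$, the quantity $\Delta_{j}(F_{n+1}-F_{n})(x)$ is a polynomial of degree two in $(\zeta_{k})$, so the hypercontractivity~(A) gives pointwise $\E|\Delta_{j}(F_{n+1}-F_{n})(x)|^{p/2}\lesssim\bigl(\E|\Delta_{j}(F_{n+1}-F_{n})(x)|^{2}\bigr)^{p/4}$. The independence of the $\zeta_{k}$ then collapses the resulting fourfold sum to a diagonal-type expression
\[
\E|\Delta_{j}(F_{n+1}-F_{n})(x)|^{2}\lesssim\sum_{(k,l)\in I_{n}}c_{k,l,j}(x)^{2},
\]
plus a controlled correction from the case $k=l$ involving $\E[\zeta_{k}^{4}]$. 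Using that $p/2\ge2$ to apply Minkowski's inequality in the spatial variable, I obtain
\[
\E\|\Delta_{j}(F_{n+1}-F_{n})\|_{L^{p/2}}^{p/2}\lesssim\Bigl(\sum_{(k,l)\in I_{n}}\|c_{k,l,j}\|_{L^{p/2}}^{2}\Bigr)^{p/4}.
\]
Combined with a Markov bound and a union bound over $j\ge-1$, at the cost of $\varepsilon>0$ of regularity via the embedding $\mathcal{B}_{p/2,\infty}^{2\beta+\gamma-2+\varepsilon}\subset\mathcal{B}_{p/2,\infty}^{2\beta+\gamma-2}$, this reduces the problem to establishing a dyadic bound of the form $\|\Delta_{j}\pi(\phi\ast a_{k},a_{l})\|_{L^{p/2}}\lesssim 2^{-j(2\beta+\gamma-2+\varepsilon)}\|\phi\|_{\gamma,1,\infty}\|a_{k}\|_{\beta-1,p,\infty}\|a_{l}\|_{\beta-1,p,\infty}$. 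Once this is in hand, Assumption~(B) together with the choice of $(m_{n})$ delivers a summable bound on $\P(\cdot>b_{n})$, which in turn yields both the almost-sure and the $L^{p/2}$-Cauchy property of $(F_{n})_{n\ge1}$ in $\mathcal{B}_{p/2,\infty}^{2\beta+\gamma-2}$.

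The hard part is precisely the dyadic estimate on $\|\Delta_{j}\pi(\phi\ast a_{k},a_{l})\|_{L^{p/2}}$: the resonant estimate in Lemma~\ref{lem:paraproduct} applies only when $2\beta+\gamma-2>0$, which is \emph{not} imposed in the theorem. Instead, one has to exploit the dyadic structure of $\pi$ directly, using that $\mathcal{F}(\Delta_{i}(\phi\ast a_{k})\Delta_{i'}a_{l})$ is supported in an annulus of radius $\sim 2^{i}$ so that $\Delta_{j}$ only picks up indices $i\sim i'\gtrsim j$, and then summing the $L^{p/2}$-bounds coming from the generalised Young inequality (Lemma~\ref{lem:Convolution}). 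This is the step at which the probabilistic information becomes indispensable: the bilinear form $\pi(\phi\ast\xi,\xi)$ is analytically ill-defined whenever $2\beta+\gamma-2\le 0$, and the construction survives only because the independence of the $\zeta_{k}$ removes the off-diagonal divergence that would otherwise obstruct Lemma~\ref{lem:paraproduct}.
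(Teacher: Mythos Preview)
Your global architecture (Borel--Cantelli on the increments $F_{n+1}-F_n$, hypercontractivity to pass from $L^{p/2}$ to $L^2$, and independence of the $\zeta_k$ to collapse the fourth moment to a diagonal sum) is exactly the paper's, and your use of Minkowski in $x$ to reach $(\sum_{(k,l)}\|c_{k,l,j}\|_{L^{p/2}}^2)^{p/4}$ is legitimate for $p\ge4$. The gap is in the last deterministic step. The bound you aim for,
\[
\|\Delta_{j}\pi(\phi\ast a_{k},a_{l})\|_{L^{p/2}}\lesssim 2^{-j(2\beta+\gamma-2+\varepsilon)}\|\phi\|_{\gamma,1,\infty}\|a_{k}\|_{\beta-1,p,\infty}\|a_{l}\|_{\beta-1,p,\infty},
\]
is precisely the statement that $\pi(\phi\ast a_k,a_l)\in\mathcal{B}_{p/2,\infty}^{2\beta+\gamma-2+\varepsilon}$ with the paraproduct bound, and this is \emph{false} in the regime $2\beta+\gamma-2\le0$ that the theorem is meant to cover. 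Your own dyadic computation shows why: the Fourier support of $\Delta_i(\phi\ast a_k)\Delta_{i'}a_l$ with $|i-i'|\le1$ is a \emph{ball} of radius $\sim2^i$ (not an annulus), so $\Delta_j$ picks up all $i\gtrsim j$, and the resulting tail sum $\sum_{i\gtrsim j}2^{-i(2\beta+\gamma-2)}$ diverges unless $2\beta+\gamma-2>0$. Crucially, by this point you have already spent the probabilistic input; there is no randomness left to rescue the estimate.

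The paper avoids this trap by \emph{not} decoupling the $(m,m')$--sum from the spatial variable via Minkowski. After the union bound over $j$ and hypercontractivity, it keeps the full expression
\[
\sum_{j}2^{(2\beta+\gamma-2)jp/2}\int_{\R}\Big(\sum_{m,m'}(\Delta_j a_m)^2(x)\,(\Delta_{k_1}^{\phi}a_{m'})(x)(\Delta_{k_2}^{\phi}a_{m'})(x)\Big)^{p/4}\d x,
\]
applies a weighted H\"older inequality in $m$ and $m'$ \emph{pointwise in $x$} (with weights $c_m=\|a_m\|_{\beta-1,p,1}^2$), and only then integrates. The effect is that the $j$--summation is never asked to converge on its own; it is absorbed into Besov norms with a \emph{finite} third index, namely $\|a_m\|_{\beta-1,p,p/2}$ and $\|\phi\ast a_{m'}\|_{\beta+\gamma-1,p,p/4}$, both controlled by $\|a_m\|_{\beta-1,p,1}$ from Assumption~(B). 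This is what makes the argument go through without any sign condition on $2\beta+\gamma-2$. If you want to repair your route, the fix is to undo the Minkowski step and instead carry out a weighted H\"older bound inside the integral before separating the $j$--sum from the index sums.
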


\begin{proof}
Let $(m_{n})_{n\ge0}$ be as in (\ref{eq:mn}). As in the Lemma~\ref{lem:regularity of gaussian process}
thanks to the Borel-Cantelli Lemma it suffices to prove for some sequence
$(b_{n})\in\ell^{1}$ and finite $p\in[1,\infty)$:
\[
\sum_{n\ge1}\P\Big(\|\pi(\phi\ast\xi^{n+1},\xi^{n+1})-\pi(\phi\ast\xi^{n},\xi^{n})\|_{2\beta+\gamma-2,p/2,\infty}>b_{n}\Big)<\infty.
\]
 Defining $\Delta_{k}^{\phi}f:=\Delta_{k}(\phi\ast f)=\F^{-1}[\rho_{j}\F\phi]\ast f$
for distributions $f$, we have
\begin{align*}
 & \pi(\phi\ast\xi^{n+1},\xi^{n+1})-\pi(\phi\ast\xi^{n},\xi^{n})\\
 & =\sum_{j\ge1}\sum_{k=j-1}^{j+1}\Delta_{j}\big(\xi^{n+1}-\xi^{n}\big)\Delta_{k}^{\phi}\xi^{n+1}+\sum_{j\ge1}\sum_{k=j-1}^{j+1}\Delta_{j}\xi^{n}\Delta_{k}^{\phi}\big(\xi^{n+1}-\xi^{n}\big)\\
 & =:T_{n,1}+T_{n,2}.
\end{align*}
Since both terms can be estimated analogously, we focus on $T_{n,1}$,
for which we have
\begin{align*}
T_{n,1} & =\sum_{j\ge1}\sum_{k=j-1}^{j+1}\Delta_{j}\Big(\sum_{m=m_{n}+1}^{m_{n+1}}\zeta_{m}a_{m}\Big)\cdot\Delta_{k}^{\phi}\Big(\sum_{m=1}^{m_{n+1}}\zeta_{k}a_{m}\Big)\\
 & =\sum_{j\ge1}\sum_{k=j-1}^{j+1}\sum_{m=m_{n}+1}^{m_{n+1}}\sum_{m'=1}^{m_{n+1}}\zeta_{m}\zeta_{m'}(\Delta_{j}a_{m})(\Delta_{k}^{\phi}a_{m'}).
\end{align*}
Hence, we get
\begin{align*}
 & \|T_{n,1}\|_{2\beta+\gamma-2,p/2,\infty}\\
 & \qquad=\sup_{j}\big(2^{(2\beta+\gamma-2)j}\|\Delta_{j}T_{1}\|_{L^{p/2}}\big)\\
 & \qquad\le\sup_{j}\Big(2^{(2\beta+\gamma-2)j}\sum_{j'\sim j}\Big\|\sum_{k=j'-1}^{j'+1}\sum_{m=m_{n}+1}^{m_{n+1}}\sum_{m'=1}^{m_{n+1}}\zeta_{m}\zeta_{m'}(\Delta_{j}a_{m})(\Delta_{k}^{\phi}a_{m'})\Big\|_{L^{p/2}}\Big)\\
 & \qquad\lesssim\sup_{j}\bigg(2^{(2\beta+\gamma-2)j}\Big\|\sum_{k=j-1}^{j+1}\sum_{m=m_{n}+1}^{m_{n+1}}\sum_{m'=1}^{m_{n+1}}\zeta_{m}\zeta_{m'}(\Delta_{j}a_{m})(\Delta_{k}^{\phi}a_{m'})\Big\|_{L^{p/2}}\bigg).
\end{align*}
As above, Markov's inequality and the hypercontractivity yield
\begin{align*}
 & \P\big(\|T_{n,1}\|_{2\beta+\gamma-2,p/2,\infty}>b_{n}\big)\\
 & \quad\lesssim b_{n}^{-p/2}\sum_{j}2^{(2\beta+\gamma-2)jp/2}\int_{\R}\E\Big[\Big|\sum_{k=j-1}^{j+1}\,\sum_{m=m_{n}+1}^{m_{n+1}}\,\sum_{m'=1}^{m_{n+1}}\zeta_{m}\zeta_{m'}(\Delta_{j}a_{m})(x)(\Delta_{k}^{\phi}a_{m'})(x)\Big|^{p/2}\Big]\,\dd x\\
 & \quad\lesssim b_{n}^{-p/2}\sum_{j}2^{(2\beta+\gamma-2)jp/2}\int_{\R}\E\Big[\Big|\sum_{k=j-1}^{j+1}\,\sum_{m=m_{n}+1}^{m_{n+1}}\,\sum_{m'=1}^{m_{n+1}}\zeta_{m}\zeta_{m'}(\Delta_{j}a_{m})(x)(\Delta_{k}^{\phi}a_{m'})(x)\Big|^{2}\Big]^{\frac{p}{4}}\,\dd x\\
 & \quad\lesssim b_{n}^{-p/2}\sum_{j}2^{(2\beta+\gamma-2)jp/2}\int_{\R}\Big(\sum_{k_{1},k_{2}=j-1}^{j+2}\,\sum_{m_{1},m_{2}=m_{n}+1}^{m_{n+1}}\,\sum_{m_{1}',m_{2}'=1}^{m_{n+1}}\E\big[\zeta_{m_{1}}\zeta_{m_{1}'}\zeta_{m_{2}}\zeta_{m_{2}'}\big]\\
 & \qquad\qquad\times(\Delta_{j}a_{m_{1}})(x)(\Delta_{k_{1}}^{\phi}a_{m_{1}'})(x)(\Delta_{j}a_{m_{2}})(x)(\Delta_{k_{2}}^{\phi}a_{m_{2}'}(x)\Big)^{\frac{p}{4}}\,\dd x.
\end{align*}
In the previous sum it suffices the consider the terms where $\{m_{1}=m_{2},m_{1}'=m_{2}'\}$,
$\{m_{1}=m_{1}',m_{2}=m_{2}^{'}\}$ (being equivalent to $\{m_{1}=m_{2}',m_{2}=m_{1}^{'}\}$)
and $\{m_{1}=m_{2}=m_{1}'=m_{2}'\}$, because in all other cases $\E\big[\zeta_{m_{1}}\zeta_{m_{1}'}\zeta_{m_{2}}\zeta_{m_{2}'}\big]$
is zero by independence of the $(\zeta_{m})$. Since all partial sums
can be bounded similarly, we consider only $\{m_{1}=m_{2},m_{1}'=m_{2}'\}$
for brevity. This partial sum is given by
\begin{align*}
S_{n}:= & b_{n}^{-p/2}\sum_{j}\bigg(2^{(2\beta+\gamma-2)jp/2}\\
 & \qquad\sum_{j-1\le k_{1},k_{2}\le j+1}\int_{\R}\Big(\sum_{m=m_{n}+1}^{m_{n+1}}\,\sum_{m'=1}^{m_{n+1}}(\Delta_{j}a_{m})^{2}(x)(\Delta_{k_{1}}^{\phi}a_{m'})(x)(\Delta_{k_{2}}^{\phi}a_{m'})(x)\Big)^{\frac{p}{4}}\,\dd x\bigg)\\
= & b_{n}^{-p/2}\sum_{j}\bigg(2^{(2\beta+\gamma-2)jp/2}\\
 & \qquad\sum_{j-1\le k_{1},k_{2}\le j+1}\int_{\R}\Big(\sum_{m=m_{n}+1}^{m_{n+1}}(\Delta_{j}a_{m})^{2}(x)\Big)^{\frac{p}{4}}\Big(\sum_{m'=1}^{m_{n+1}}(\Delta_{k_{1}}^{\phi}a_{m'})(x)(\Delta_{k_{2}}^{\phi}a_{m'})(x)\Big)^{\frac{p}{4}}\,\dd x\bigg).
\end{align*}
Hölder's inequality yields for the $\ell^{1}$-sequence $c_{k}:=\|a_{k}\|_{\beta-1,p,1}^{2},k\ge1,$
\begin{align*}
S_{n} & \le\frac{1}{b_{n}^{p/2}}\sum_{j}2^{(2\beta+\gamma-2)jp/2}\sum_{j-1\le k_{1},k_{2}\le j+1}\int_{\R}\Big(\sum_{m=m_{n}+1}^{m_{n+1}}c_{m}\Big)^{\frac{p}{4}-1}\Big(\sum_{m=m_{n}+1}^{m_{n+1}}c_{m}^{-(\frac{p}{4}-1)}(\Delta_{j}a_{m})^{\frac{p}{2}}(x)\Big)\\
 & \quad\times\Big(\sum_{m'=1}^{m_{n+1}}c_{m'}\Big)^{\frac{p}{4}-1}\Big(\sum_{m'=1}^{m_{n+1}}c_{m'}^{-(\frac{p}{4}-1)}(\Delta_{k_{1}}^{\phi}a_{m'})^{\frac{p}{4}}(x)(\Delta_{k_{2}}^{\phi}a_{m'})^{\frac{p}{4}}(x)\Big)\,\dd x\\
 & \le\|c_{m}\|_{\ell^{1}}^{\frac{p}{4}-1}b_{n}^{-p/2}\Big(\sum_{m=2^{n}+1}^{2^{n+1}}c_{m}\Big)^{\frac{p}{4}-1}\sum_{j}2^{(2\beta+\gamma-2)jp/2}\\
 & \quad\sum_{j-1\le k_{1},k_{2}\le j+1}\sum_{m=m_{n}+1}^{m_{n+1}}c_{m}^{-\frac{p}{4}+1}\sum_{m'=1}^{m_{n+1}}c_{m'}^{-\frac{p}{4}+1}\int_{\R}(\Delta_{j}a_{m})^{\frac{p}{2}}(x)(\Delta_{k_{1}}^{\phi}a_{m'})^{\frac{p}{4}}(x)(\Delta_{k_{2}}^{\phi}a_{m'})^{\frac{p}{4}}(x)\,\dd x.
\end{align*}
Writing $d_{n}:=(\sum_{k=m_{n}+1}^{m_{n+1}}c_{k})^{1/2}\in\ell^{1}$
and applying once again Hölder's inequality, we obtain
\begin{align*}
S_{n} & \lesssim\|c_{m}\|_{\ell^{1}}^{\frac{p}{4}-1}b_{n}^{-p/2}d_{n}^{p/2-2}\sum_{j}\,\sum_{j-1\le k_{1},k_{2}\le j+1}\sum_{m=m_{n}+1}^{m_{n+1}}c_{m}^{-(p/4-1)}2^{(\beta-1)jp/2}\|\Delta_{j}a_{m}\|_{L^{p}}^{p/2}\\
 & \qquad\sum_{m'=1}^{2^{n+1}}c_{m'}^{-(p/4-1)}2^{(\beta-1+\gamma)k_{1}p/4}\|\Delta_{k_{1}}^{\phi}a_{m'}\|_{L^{p}}^{p/4}2^{(\beta-1+\gamma)k_{2}p/4}\|\Delta_{k_{2}}^{\phi}a_{m'}\|_{L^{p}}^{p/4}\\
 & \lesssim\|c_{m}\|_{\ell^{1}}^{\frac{p}{4}-1}b_{n}^{-p/2}d_{n}^{p/2-2}\sum_{m=2^{n}+1}^{2^{n+1}}c_{m}^{-(p/4-1)}\|a_{m}\|_{\beta-1,p,p/2}^{p/2}\sum_{m'=1}^{2^{n+1}}c_{m'}^{-(\frac{p}{4}-1)}\|\phi\ast a_{m'}\|_{\beta-1+\gamma,p,p/4}^{p/2}\\
 & \le\|c_{m}\|_{\ell^{1}}^{\frac{p}{4}}(d_{n}/b_{n})^{p/2}\Big(\sup_{m'}c_{m'}^{-1/2}\|\phi\ast a_{m'}\|_{\beta-1+\gamma,p,p/4}\Big)^{p/2}.
\end{align*}
With $\|\phi\ast a_{m'}\|_{\beta-1+\gamma,p,p/4}\lesssim\|\phi\|_{\gamma,1,1}\|a_{m'}\|_{\beta-1,p,p/4}$
by Young's inequality, we conclude $S_{n}\lesssim(d_{n}/b_{n})^{p/2}\|\phi\|_{\gamma,1,1}^{p/2}.$
Since $d_{n}\lesssim n^{-3}$, we deduce $\sum_{n\ge1}S_{n}<\infty$
for $b_{n}=n^{-3/2}$\textcolor{blue}{.}
\end{proof}

\begin{rem}
For the special case where $\phi\ast\xi$ is replaced by the antiderivative
of $\xi$, alternative constructions of rough path and iterated integrals
above stochastic processes defined by random Fourier or Schauder expansions
were considered in \citep{Friz2016,Gubinelli2016,Promel2015}. 
\end{rem}

\section{Application to rough and stochastic differential equations\label{sec:examples}}

The general existence and uniqueness results for solutions to Volterra
equations of the form~(\ref{eq:convol}) provided in Section~\ref{sec:convolution}
allow to recover well-known results in the paracontrolled distribution
setting but additionally contain many novel results concerning differential
equations driven by stochastic processes or convolutional rough paths.
In the following we discuss some exemplary stochastic equations and
explicitly state the particular existence and uniqueness results. 

\subsection{Stochastic and rough differential equations with possible delay\label{subsec: rough differential equation}}

Ordinary stochastic differential equations and their pathwise counterparts
given by rough differential equations constitute fundamental and well
studied objects in stochastic analysis. These differential equations
can typically written in their integral form 
\begin{equation}
u(t)=u_{0}+\int_{0}^{t}\sigma_{1}(u(s-r_{1}))\d\theta(s)+\int_{0}^{t}\sigma_{2}(u(s-r_{2}))\d s,\quad t\in[0,T],\label{eq:rde}
\end{equation}
where $\theta$ is a suitable driving signal, e.g., a (fractional)
Brownian motion or a rough path, and $r_{1},r_{2}\geq0$ are constant
delay parameters. Thanks to the general regularity assumptions required
on the kernel functions in Section~\ref{sec:convolution}, the differential
equation~(\ref{eq:rde}) can be viewed as a special case of the Volterra
equation~(\ref{eq:convol}), and we can recover for instance the
following results. For this purpose, we denote by $\dot{\theta}$
the distributional derivative of $\theta\in\mathcal{B}_{p,\infty}^{\beta}$
and introduce a kernel function $\phi_{T}$ which is assumed to be
compactly supported on $[0,2T]$, smooth on $\R\setminus\{0\}$ and
satisfying $\phi_{T}(t)=\ind_{[0,\infty)}(t)$ for all $t\in[-T,T]$.
\begin{cor}
\label{cor:sde classical} Let $u_{0}\in\R$, $\sigma_{1}\in C^{3}$,
$\sigma_{2}\in C^{2}$ with $\sigma_{1}(0)=\sigma_{2}(0)=0$ and $r_{1},r_{2}\geq0$.
\begin{enumerate}
\item If $\theta$ is an $n$-dimensional fractional Brownian motion with
Hurst index $H>1/2$, then there exists a unique solution to the stochastic
differential equation~(\ref{eq:rde}) up to a random time~$T$.
\item If $(\dot{\theta},\pi(\phi_{T}*(\dot{\theta}\ind_{[0,T]}),\dot{\theta}(\cdot+r_{1})\ind_{[0,T]}(\cdot+r_{1})))\in\mathcal{B}_{p}^{\beta,1}(\phi_{T})$
for $\beta>1/3$ and $p\in[3,\infty)$, then there exists a unique
solution to the rough differential equation~(\ref{eq:rde}) up to
a random time~$T$. 
\end{enumerate}
\end{cor}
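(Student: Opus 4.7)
The plan is to recast~(\ref{eq:rde}) as an instance of the Volterra equation~(\ref{eq:convol}) and then invoke the existence and uniqueness results of Section~\ref{sec:convolution}. A change of variables absorbs the delays into a shift of both the kernel and the driving signal: setting $\phi_j(y):=\phi_T(y-r_j)$, $\xi_1(\tau):=\dot\theta(\tau+r_1)\ind_{[0,T]}(\tau+r_1)$ and $\xi_2(\tau):=\ind_{[0,T]}(\tau+r_2)$, one checks that $(\phi_j\ast(\sigma_j(u)\xi_j))(t)$ reproduces the original integral on $[0,T]$. Because the two shifts cancel inside the convolution, $\phi_1\ast\xi_1=\phi_T\ast(\dot\theta\ind_{[0,T]})$, so the canonical resonant term $\pi(\phi_1\ast\xi_1,\xi_1)$ of the convolutional rough path coincides exactly with the expression $\pi(\phi_T\ast(\dot\theta\ind_{[0,T]}),\dot\theta(\cdot+r_1)\ind_{[0,T]}(\cdot+r_1))$ appearing in the hypothesis of~(ii). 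Each $\phi_j$ is a truncated shifted indicator of $[r_j,\infty)$ and thus lies in $\mathcal{B}_{1,\infty}^{1}$; multiplied by $(\cdot-r_j)$ it vanishes at the jump $y=r_j$ so that $(\cdot-r_j)\phi_j\in\mathcal{B}_{1,\infty}^{2}$, which is precisely what Lemma~\ref{lem:convolution paracontrolled} requires with shift parameter $r=r_j$.

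For part~(i), classical regularity results yield $\dot\theta\in\mathcal{B}_{p,\infty}^{H-1-\epsilon}$ almost surely for every $\epsilon>0$ and arbitrarily large $p$ (cf.\ Section~\ref{sec:resonant term}); together with $\gamma_1=1$ this gives $\alpha=\beta_1=H-\epsilon$, whence $\alpha+\beta_1=2H-2\epsilon>1$, placing us in the Young regime of Proposition~\ref{prop:YoungLip}. For part~(ii), the hypothesis directly furnishes the convolutional rough path $(\dot\theta,\mu)\in\mathcal{B}_p^{\beta,1}(\phi_T)$ with $\beta>1/3$, so that $\alpha=\beta\in(1/3,1)$, $2\alpha+\beta_1=3\beta>1$ and $\alpha+\beta_2=\beta+1>1$, which is exactly the setting of Theorem~\ref{thm:solve}.

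The flatness assumption on $\sigma_1,\sigma_2$ (smallness of $\Delta$) imposed by these results is circumvented by the localisation procedure of Theorem~\ref{thm:localSol}: assumption~$(v)$ there is satisfied trivially with $s=0$ and $\psi\equiv\phi_T$ since $\gamma_j=1$, so unique solutions exist on a sufficiently short interval, and the global solution up to the random time~$T$ follows from the pasting argument indicated in Remark~\ref{rem:local soluton}; here $T=T(\omega)$ is chosen to ensure that the relevant random norms (the Besov norm of $\dot\theta$ in~(i), respectively the convolutional rough path norm in~(ii)) remain finite, which is almost surely the case on any bounded time horizon. The principal technical hurdle is therefore not the analysis itself but the careful matching between the shifted kernels, the shifted signals and the specific form of the resonant term prescribed in the hypothesis; once this identification is made, every remaining estimate is a direct application of the results already developed in Sections~\ref{sec:convolution} and~\ref{sec:resonant term}.
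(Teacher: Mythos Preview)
Your proof is correct and follows essentially the same approach as the paper: recast~(\ref{eq:rde}) as an instance of~(\ref{eq:convol}) via the shifted kernels $\phi_T(\cdot-r_j)$ and the shifted signal $\dot\Xi=\dot\theta(\cdot+r_1)\ind_{[0,T]}(\cdot+r_1)$, verify $\phi_T(\cdot-r_j)\in\mathcal{B}_{1,\infty}^{1}$ and $(\cdot-r_j)\phi_T(\cdot-r_j)\in\mathcal{B}_{1,\infty}^{2}$, and then appeal to Theorem~\ref{thm:localSol}. Your additional observation that the two shifts cancel so that $\phi_1\ast\xi_1=\phi_T\ast(\dot\theta\ind_{[0,T]})$, identifying the resonant term in the hypothesis of~(ii) with the canonical one, and your explicit separation of~(i) into the Young regime of Proposition~\ref{prop:YoungLip}, are useful clarifications the paper leaves implicit.
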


\begin{proof}
Let $\chi$ be a smooth and compactly support function with $\chi(t)=1$
for $t\in[0,T]$. Equation~(\ref{eq:rde}) coincides on the interval
$[0,T]$ with
\begin{align*}
u(t) & =u_{0}\chi(t)+\int_{\R}\phi_{T}(t-s-r_{1})\sigma_{1}(u(s))\dot{\Xi}(s)\d s+\int_{\R}\phi_{T}(t-s-r_{2})\sigma_{2}(u(s))\ind_{[0,T]}(s+r_{2})\d s
\end{align*}
for $t\in\R$ and driving signal $\dot{\Xi}(\cdot):=\dot{\theta}(\cdot+r_{1})\ind_{[0,T]}(\cdot+r_{1})$.
Note that $\phi_{T}(\cdot-r_{1})\in\mathcal{B}_{1,\infty}^{1}$ and
$(\cdot-r_{1})\phi_{T}(\cdot-r_{1})\in\mathcal{B}_{1,\infty}^{2}$.
Hence, (i) and (ii) follow by applying Theorem~\ref{thm:localSol}
and recalling that a fractional Brownian motion with Hurst index $H>1/2$
has almost surely $(H-\epsilon)$-Hölder continuous sample paths for
every $\epsilon>0$.
\end{proof}
Existence and uniqueness results for stochastic delay equations like~(\ref{eq:rde})
driven by a fractional Brownian motion with Hurst index $H>1/2$ were
first obtained by \citet{Ferrante2006}. Differential equations driven
by $\alpha$-Hölder continuous rough paths with $\alpha\in(1/3,1/2)$
and constant delay were first treated by \citet{Neuenkirch2008}.
Rough differential equations without delay but in the paracontrolled
distribution setting were considered in \citep{Gubinelli2015} and
\citep{Promel2015}. Furthermore, we would like to point out that
Corollary~\ref{cor:sde classical}~(ii) can be applied to a fractional
Brownian motion with Hurst index $H\in(1/3,1/2)$ due to Theorem~\ref{thm: contruction resonant term}.
\begin{rem}
For stochastic and rough differential equations like~(\ref{eq:rde}),
it is straightforward to obtain a solution on any arbitrary large
interval~$[0,T]$ applying iteratively Corollary~\ref{cor:sde classical}
on small intervals and glueing the so obtained local solutions together.
\end{rem}

\subsection{Stochastic and rough Volterra equations }

Stochastic integral equations of Volterra type appear in various areas
of mathematical modelling such as in physics or mathematical finance
and the treatment of such Volterra equations involving stochastic
integration goes back to the pioneering works of \citet{Berger1980a,Berger1980b}.
The pathwise counterparts of stochastic Volterra equations, namely,
Volterra equations driven by rough paths were first considered by
\citet{Deya2009,Deya2011}. More precisely, we consider Volterra equations
of convolution type 
\begin{equation}
u(t)=u_{0}(t)+\int_{0}^{t}\psi_{1}(t-s)\sigma_{1}\big(u(s)\big)\dot{\theta}(s)\d s+\int_{0}^{t}\psi_{2}(t-s)\sigma_{2}\big(u(s)\big)\d s,\label{eq:Volterra equation}
\end{equation}
for $t\in[0,T]$ and $\dot{\theta}$ denotes again the distributional
derivative of the path~$\theta\in\mathcal{B}_{p,\infty}^{\beta}$.
\begin{cor}
\label{cor:volterra sde} Let $p\in[3,\infty]$ and $\beta\in(1/3,1/2)$.
Suppose that $\psi_{1},\psi_{2}\in\mathcal{B}_{\infty,\infty}^{1}$,
$u_{0}\in\mathcal{B}_{p,\infty}^{2\beta}$ and $\sigma_{1}\in C^{3}$,
$\sigma_{2}\in C^{2}$ with $\sigma_{1}(0)=\sigma_{2}(0)=0$. If $(\dot{\theta},\pi((\phi_{T}\psi_{1})*(\dot{\theta}),\dot{\theta})\in\mathcal{B}_{p}^{\beta,1}(\phi_{T}\psi_{1})$
and $T>0$ is sufficiently small, then there exists $u\in\mathcal{B}_{p,\infty}^{\beta}$
which is the unique solution to the rough Volterra equation~(\ref{eq:Volterra equation})
on $[0,T]$.
\end{cor}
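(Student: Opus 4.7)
The strategy is to embed the Volterra equation (\ref{eq:Volterra equation}) on $[0,T]$ into the convolutional form (\ref{eq:convol}) on all of $\R$ and then invoke the general theory from Section~\ref{sec:convolution}. Specifically, I would take the kernels $\phi_1:=\phi_T\psi_1$ and $\phi_2:=\phi_T\psi_2$ (both compactly supported in $[0,2T]$), the driving signals $\xi_1:=\dot\theta\ind_{[0,T]}$ (interpreted distributionally) and $\xi_2:=\chi_T$ for a smooth compactly supported cutoff equal to $1$ on $[0,T]$, and the initial condition given in trivial paracontrolled form $u_0^{(1)}=u_0^{(2)}=0$, $u_0^{\#}=u_0\chi_T$ (extending $u_0$ to $\R$ by multiplication with a cutoff). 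For every $t\in[0,T]$ each term of this extended equation reproduces the corresponding integral in (\ref{eq:Volterra equation}) because $\phi_T(t-s)=\ind_{[0,\infty)}(t-s)$ whenever $s\in[0,t]\subset[0,T]$; the reverse inclusion---that a solution of (\ref{eq:Volterra equation}) on $[0,T]$ extends uniquely to a solution of the convolutional equation---is immediate by the same identity.

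With the parameter choice $\beta_1=\beta$, $\gamma_1=1$, so $\alpha=\beta$, together with $\beta_2=1$, $\gamma_2=1$, I would check the hypotheses of Theorem~\ref{thm:solve} (or, more precisely, its small-time localised version adapted from Theorem~\ref{thm:localSol}). The exponent conditions $\alpha\in(\tfrac13,1)$, $2\alpha+\beta_1=3\beta>1$ and $\alpha+\beta_2=\beta+1>1$ follow from $\beta\in(\tfrac13,\tfrac12)$; the kernel regularity $\phi_j\in\mathcal{B}_{1,\infty}^{1}$ is inherited from $\psi_j\in\mathcal{B}_{\infty,\infty}^{1}$ combined with the compact support provided by $\phi_T$; the shifted condition $\|(\cdot)\phi_j\|_{2,1,\infty}<\infty$ (with $r_j=0$) holds because multiplying by~$t$ cancels the jump of $\phi_T\psi_j$ at the origin, leaving a Lipschitz-type function vanishing at $0$. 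The initial condition $u_0\in\mathcal{B}_{p,\infty}^{2\beta}$, being of compact support after the cutoff, embeds into $\mathcal{B}_{p/2,\infty}^{2\alpha}$, and the convolutional rough path input is exactly what is assumed.

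The main obstacle is obtaining the smallness of $\Delta=\|\sigma_{1}\|_{C^{3}}\|\phi_{1}\|_{1,1,\infty}+\|\sigma_{2}\|_{C^{2}}\|\phi_{2}\|_{1,1,\infty}$ required by Theorem~\ref{thm:solve}, since the Heaviside-type jump of $\phi_T\psi_j$ at $0$ prevents $\|\phi_j\|_{1,1,\infty}$ itself from vanishing with $T$. I would handle this exactly as in Theorem~\ref{thm:localSol}: writing $\phi_T\psi_j=\chi(T^{-1}\cdot)\widetilde\phi_j$ for a suitable cutoff $\chi\in\mathcal{C}_c^\infty$ supported in $[-2,2]$, the dilation $\Lambda_{T}$ and a rescaling of the vector fields by $\delta=T^{\beta-1/p-2\tau}$ (for a small $\tau>0$, valid since $\beta>\tfrac13\ge\tfrac1p$) transform the problem into an equation with $\|\delta\sigma_j\|_{C^{k}}\to0$ as $T\to0$ while the Besov norms of $\tilde\xi_j$, $\tilde\phi_j^{loc}$, $\tilde u_0^{loc}$ remain bounded. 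The structural assumption (v) of Theorem~\ref{thm:localSol} applies here with $s=0$ and $\psi=\phi_T\psi_j\in\mathcal{B}_{1,\infty}^{1}$, and the boundedness of $\|\pi(\tilde\phi_1^{loc}\ast\tilde\xi_1,\tilde\xi_1)\|_{\tilde\alpha+\beta_1-1,p/2,\infty}$ under dilation follows from Lemma~\ref{lem:scaling resonant term} together with the hypothesised convolutional rough path enhancement of $\dot\theta$ (cf.\ also Remark~\ref{rem:smallKernel}).

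Theorem~\ref{thm:solve} applied to the rescaled equation then yields a unique solution $\tilde u\in\mathcal{B}_{p,\infty}^{\alpha}$; reversing the dilation gives a unique $u\in\mathcal{B}_{p,\infty}^{\beta}$ to the extended convolutional equation on $\R$, whose restriction to $[0,T]$ is the sought solution of (\ref{eq:Volterra equation}). Uniqueness on $[0,T]$ follows because any competing solution admits the same convolutional extension and is therefore identified with $u$ through the fixed-point uniqueness of Theorem~\ref{thm:solve}.
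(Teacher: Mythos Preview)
Your approach is essentially the same as the paper's: rewrite the Volterra equation on $[0,T]$ as a convolutional equation on $\R$ with kernels $\phi_j=\phi_T\psi_j\in\mathcal{B}_{1,\infty}^{1}$ and then appeal to the localised existence result. The paper's proof is simply a two-line invocation of Theorem~\ref{thm:localSol} together with Remark~\ref{rem:local soluton}; you reproduce the scaling argument of that theorem by hand, which is correct but unnecessary --- you may cite Theorem~\ref{thm:localSol} directly once you have verified the kernel and structural hypotheses. One small point: localising the signal via $\dot\theta\,\ind_{[0,T]}$ is delicate as a product of a negative-regularity distribution with an indicator; the paper sidesteps this by keeping $\dot\theta$ as the signal on $\R$ (its membership in $\mathcal{B}_{p,\infty}^{\beta-1}$ is already part of the convolutional rough path assumption) and letting the compactly supported kernel $\phi_T\psi_1$ effect the restriction to $[0,T]$.
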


\begin{proof}
We first observe that $\psi_{i}\phi_{T}\in\mathcal{B}_{1,\infty}^{1}$
because $\psi_{i}\in\mathcal{B}_{\infty,\infty}^{1}$ and $\phi_{T}\in\mathcal{B}_{p,\infty}^{1/p}$,
for $i=1,2$. Moreover, the rough Volterra equation~(\ref{eq:Volterra equation})
coincides on the interval $[0,T]$ with
\begin{align*}
u(t) & =u_{0}(t)+\int_{\R}\phi_{T}(t-s)\psi_{1}(t-s)\sigma_{1}(u(s))\dot{\theta}(s)\d s\\
 & \qquad+\int_{\R}\phi_{T}(t-s)\psi_{1}(t-s)\sigma_{2}(u(s))\d s,\quad t\in\R.
\end{align*}
Therefore, Theorem~\ref{thm:localSol} and Remark~\ref{rem:local soluton}
imply the assertion. 
\end{proof}
\begin{rem}
Assuming $\psi_{1}(0)\neq0$, it is not necessary to include the kernel
function $\psi_{1}$ in the definition of the driving rough path.
Indeed, one can take a generic rough path, i.e., independent of $\psi_{1}$,
thanks to Lemma~\ref{lem:roughpath}. Furthermore, notice that the
kernel $\psi_{1}$ has only to be Lipschitz continuous. This Lipschitz
assumption is a significant relaxation compared to the $C^{3}$-regularity
of the kernel functions so far required for Volterra equations of
convolutional type driven by rough paths, see \citet{Deya2009,Deya2011}.
\end{rem}

The previous pathwise existence and uniqueness result for Volterra
equations can immediately be applied to a wide class of stochastic
processes thanks to Theorem~\ref{thm: contruction resonant term}.
\begin{cor}
Let $\theta$ be a stochastic process such that $\dot{\theta}$ is
of the form~(\ref{eq:seriesExpansion}) satisfying Assumption (A)
and (B) for $\beta\in(1/3,1)$ and $p\geq3$. Suppose that $\psi_{1},\psi_{2}\in\mathcal{B}_{\infty,\infty}^{1}$,
$u_{0}\in\mathcal{B}_{p,\infty}^{2\beta}$ and $\sigma_{1}\in C^{3}$,
$\sigma_{2}\in C^{2}$ with $\sigma_{1}(0)=\sigma_{2}(0)=0$. Then,
there exists $u\in\mathcal{B}_{p,\infty}^{\beta}$ which is the unique
solution of the stochastic Volterra equation~(\ref{eq:Volterra equation})
up to a random time~$T$.
\end{cor}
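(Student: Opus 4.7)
The plan is to combine the probabilistic construction of the convolutional rough path from Section~\ref{sec:resonant term} with the pathwise solution theory of Corollary~\ref{cor:volterra sde}, applied $\omega$-by-$\omega$. First I would verify that the kernel $\phi_{T}\psi_{1}$ satisfies the hypotheses of Corollary~\ref{cor:volterra sde}: since $\psi_{1}\in\mathcal{B}_{\infty,\infty}^{1}$ and $\phi_{T}$ is compactly supported with $\phi_{T}\in\mathcal{B}_{p',\infty}^{1/p'}$ for any $p'\ge1$, Bony's paraproduct estimates (Lemma~\ref{lem:paraproduct}) give $\phi_{T}\psi_{1}\in\mathcal{B}_{1,\infty}^{1}$, and the structural assumption (v) of Theorem~\ref{thm:localSol} (with $s=0$) holds by construction of $\phi_{T}$.

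Next, I would apply Lemma~\ref{lem:regularity of gaussian process} to the series expansion $\dot{\theta}=\sum_{n}a_{n}\zeta_{n}$: with the sequence $(m_{n})$ defined in~(\ref{eq:mn}), the partial sums $\xi^{n}:=\sum_{k=1}^{m_{n}}a_{k}\zeta_{k}$ converge almost surely to $\dot{\theta}$ in $\mathcal{B}_{p,\infty}^{\beta-1}$, so that $\dot{\theta}\in\mathcal{B}_{p,\infty}^{\beta-1}$ for almost every~$\omega$. Then Theorem~\ref{thm: contruction resonant term}, applied with kernel $\phi_{T}\psi_{1}$ and regularity exponent $\gamma=1$, yields the almost sure limit
\[
\mu:=\pi\big((\phi_{T}\psi_{1})\ast\dot{\theta},\dot{\theta}\big)=\lim_{n\to\infty}\pi\big((\phi_{T}\psi_{1})\ast\xi^{n},\xi^{n}\big)\in\mathcal{B}_{p/2,\infty}^{2\beta-1},
\]
subject to the minor technical adjustments that $p$ is chosen sufficiently large (Assumption (B) being preserved under enlarging $p$, via Besov embedding) and that independence of the $(\zeta_{n})$ holds, which is automatic in the Gaussian examples that motivate the result.

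To apply Corollary~\ref{cor:volterra sde} one must verify that the pair $(\dot{\theta},\mu)$ actually lies in the closure $\mathcal{B}_{p}^{\beta,1}(\phi_{T}\psi_{1})$ of the set $\{(\eta,\pi((\phi_{T}\psi_{1})\ast\eta,\eta)):\eta\in\mathcal{C}_{c}^{\infty}\}$. The partial sums $\xi^{n}$ are not in $\mathcal{C}_{c}^{\infty}$, but each is a finite linear combination of the coefficient functions $a_{k}$; standard mollification and cutoff produce a family $(\eta^{n,\ell})_{\ell}\subset\mathcal{C}_{c}^{\infty}$ with $\eta^{n,\ell}\to\xi^{n}$ in $\mathcal{B}_{p,\infty}^{\beta-1}$ as $\ell\to\infty$, and by Young's inequality (Lemma~\ref{lem:Convolution}) together with Bony's estimates, the associated resonant terms $\pi((\phi_{T}\psi_{1})\ast\eta^{n,\ell},\eta^{n,\ell})$ converge to $\pi((\phi_{T}\psi_{1})\ast\xi^{n},\xi^{n})$ in $\mathcal{B}_{p/2,\infty}^{2\beta-1-\epsilon}$ for any $\epsilon>0$. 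A diagonal extraction then yields a smooth compactly supported sequence approximating $(\dot{\theta},\mu)$ in the norm defining $\mathcal{B}_{p}^{\beta,1}(\phi_{T}\psi_{1})$.

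With $(\dot{\theta},\mu)\in\mathcal{B}_{p}^{\beta,1}(\phi_{T}\psi_{1})$ secured almost surely, Corollary~\ref{cor:volterra sde} applies pathwise and produces, for almost every $\omega$, a random time $T(\omega)>0$ and a unique solution $u(\omega)\in\mathcal{B}_{p,\infty}^{\beta}$ on $[0,T(\omega)]$. The principal obstacle is the third step above: approximating $(\dot{\theta},\mu)$ by genuinely smooth compactly supported convolutional rough paths (rather than merely by the partial sums $\xi^{n}$) and tracking the paraproduct estimates carefully enough that convergence holds in the correct norm. Once this closure property is established, the conclusion is immediate from the deterministic Corollary~\ref{cor:volterra sde}.
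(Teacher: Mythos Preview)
Your overall plan---use Theorem~\ref{thm: contruction resonant term} to construct the resonant term almost surely, then feed the resulting pair into Corollary~\ref{cor:volterra sde} pathwise---is precisely the argument the paper has in mind; the corollary is stated there without proof as an immediate consequence. Your side remarks about the hypothesis $p\ge4$ and the independence of the $(\zeta_n)$ in Theorem~\ref{thm: contruction resonant term} are correct and worth recording.

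The step you single out as the ``principal obstacle'' does, however, contain a genuine gap. In the rough regime $\beta\le1/2$ the map $\eta\mapsto\pi\big((\phi_T\psi_1)\ast\eta,\eta\big)$ is \emph{not} continuous on $\mathcal{B}_{p,\infty}^{\beta-1}$: Lemma~\ref{lem:paraproduct}(iii) requires the sum of the input regularities to be strictly positive, and here $(\phi_T\psi_1)\ast\eta$ has uniform regularity~$\beta$ while $\eta$ has uniform regularity~$\beta-1$, giving $2\beta-1\le0$. Hence $\eta^{n,\ell}\to\xi^n$ in $\mathcal{B}_{p,\infty}^{\beta-1}$ combined with ``Young plus Bony'' cannot force convergence of the resonant terms---this discontinuity is exactly what the whole paracontrolled machinery is built to circumvent, and a generic mollification cannot undo it. (The paper also glosses over this closure question.) A workable repair is to approximate $\dot\theta$ by spectral truncations $S_N\dot\theta:=\sum_{j<N}\Delta_j\dot\theta$: since convolution commutes with the Littlewood--Paley blocks, $\pi\big((\phi_T\psi_1)\ast S_N\dot\theta,S_N\dot\theta\big)$ is literally a partial sum of the block series defining~$\mu$, and its convergence to~$\mu$ follows from tail estimates of the type already carried out in the proof of Theorem~\ref{thm: contruction resonant term}. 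The truncations $S_N\dot\theta$ are smooth but not compactly supported; for fixed~$N$ they lie in every $\mathcal{B}_{p,\infty}^{M}$, so multiplying by a spatial cutoff and passing to a diagonal sequence lands in $\mathcal{C}_c^{\infty}$, the cutoff step now being harmless because at fixed~$N$ Bony's estimate \emph{does} apply.
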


\subsection{SDEs with fractional derivatives\label{sec:fractSDEs}}

Stochastic Volterra equations with singular kernels are of particular
interest because of their applications to stochastic partial differential
equations (e.g. \citep{Zhang2010}) and stochastic differential equations
with fractional derivatives (e.g. \citep{Wang2008}), but also because
of recent developments in mathematical finance showing that Volterra
equations with singular kernels serve as very suitable models for
the probabilistic and irregular behaviour of volatility in financial
markets, see e.g. \citep{ElEuch2016}. 

In order to consider SDEs allowing for fractional derivatives, let
us recall the definition of the Riemann-Liouville fractional integral
operator (with base point 0), which is given by 
\[
I^{r}(f)(t):=\frac{1}{\Gamma(r)}\big((s^{r-1}\ind_{(0,\infty)}(s))*f\big)(t)=\frac{1}{\Gamma(r)}\int_{0}^{t}(t-s)^{r-1}f(s)\d s
\]
for $r\in(0,1)$, $f$ a suitable function and the Gamma function
\[
\Gamma(r):=\int_{0}^{\infty}t^{r-1}e^{-t}\d t,\qquad r>0.
\]
The corresponding fractional derivative operator is defined by $D^{r}f:=\frac{\dd}{\dd t}I^{1-r}(f)$.
While there are many different fractional derivative operators, the
Riemann-Liouville derivative can be considered as a natural extension
of the classical derivative to fractional order. A (fairly simple)
stochastic differential equation of fractional order $r\in(0,1)$
driven by a Brownian motion is
\[
D^{r}u(t)=\sigma(u(t))\d W(t),\quad u(0)=u_{0},
\]
or equivalently expressed as a Volterra integral equation with singular
kernel
\begin{align}
u(t) & =u_{0}+\frac{1}{\Gamma(r)}\int_{0}^{t}(t-s)^{r-1}\sigma(u(s))\dot{W}(s)\d s,\quad t\in[0,T],\label{eq:sde fractional derivatives}
\end{align}
where $\dot{W}$ is the distributional derivative of a Brownian motion~$W$.
For a more general treatment of fractional stochastic differential
equations driven by Brownian motion, we refer for instance to \citet{Lototsky2018}.
Based on the results provided in Section~\ref{sec:convolution},
we obtain the following existence and uniqueness statement. 
\begin{cor}
Let $W$ be an $n$-dimensional Brownian motion and $r>5/6$. Suppose
that $u_{0}\in\R$ and $\sigma\in C^{3}$ with $\sigma(0)=0$. Then,
there exists $u\in\mathcal{B}_{3,\infty}^{\alpha}$ for any $\alpha<r-1/2$,
which is the unique solution to the stochastic Volterra equation~(\ref{eq:sde fractional derivatives})
up to a random time~$T$.
\end{cor}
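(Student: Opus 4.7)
The plan is to view equation~(\ref{eq:sde fractional derivatives}) as an instance of the Volterra equation~(\ref{eq:convol}) with vanishing drift ($\sigma_{2}\equiv0$), kernel
\[
\phi_{1}(x):=\Gamma(r)^{-1}x^{r-1}\ind_{(0,\infty)}(x),
\]
and rough driving signal $\xi_{1}=\dot{W}$. Without loss of generality assume $r\in(5/6,1)$, since for $r\ge1$ the kernel is continuous and Corollary~\ref{cor:volterra sde} already applies. Fix a smooth compactly supported cutoff $\chi$ equal to one on a neighbourhood of $[0,T]$. I will apply Theorem~\ref{thm:localSol} with $p\ge3$, $\beta_{1}:=1/2-\epsilon$, $\gamma_{1}:=r-\epsilon$ for small $\epsilon>0$, so that the resulting regularity $\alpha=\beta_{1}+\gamma_{1}-1=r-1/2-2\epsilon$ can be made arbitrarily close to $r-1/2$.

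The analytical hypotheses are checked directly. From $r>5/6$ and $\epsilon$ small we get $\alpha>1/3$, $2\alpha+\beta_{1}=2r-1/2-3\epsilon>1$, and $\alpha+\beta_{1}=r-3\epsilon<1$. The structural assumption~(v) on $\phi_{1}$ holds with $r_{1}=0$, $s:=1-r\in(0,1/6)$, and $\psi:=\Gamma(r)^{-1}\chi$; since $\psi$ is smooth with compact support it lies in every $\mathcal{B}_{1,\infty}^{s+\delta}$, in particular for $\delta>(2-2\beta_{1})\vee1$. The moment-type bound $\|x\phi_{1}(x)\|_{\gamma_{1}+1,1,\infty}<\infty$ follows from the gain of one regularity order when passing from $x^{r-1}\ind_{(0,\infty)}(x)$ to $x^{r}\ind_{(0,\infty)}(x)$.

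The main probabilistic step is the construction of the convolutional rough path $(\dot{W},\pi(\phi_{1}\ast\dot{W},\dot{W}))\in\mathcal{B}_{p}^{\beta_{1},\gamma_{1}}(\phi_{1})$. Using the Karhunen--Lo\`eve expansion of a periodic extension of Brownian motion as discussed in the first example after Assumption~(B), the localised white noise $\dot{W}\chi$ admits the series representation~(\ref{eq:seriesExpansion}) with i.i.d.\ standard Gaussian $(\zeta_{n})$. Gaussian hypercontractivity gives Assumption~(A), while the coefficient asymptotics $\|a_{n}\|_{\beta_{1}-1,p,1}\sim n^{\beta_{1}-1}$ yield Assumption~(B) for every $\beta_{1}<1/2$. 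Theorem~\ref{thm: contruction resonant term}, applied with $p$ large enough to accommodate the restriction $p\ge4$ and then combined with the local embedding between Besov spaces with smaller integrability exponent on the compact support of the cutoffs, produces almost surely the resonant term in $\mathcal{B}_{p/2,\infty}^{2\beta_{1}+\gamma_{1}-2}$.

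With both ingredients in place, Theorem~\ref{thm:localSol} delivers a unique solution of the localised Volterra equation in $\mathcal{B}_{3,\infty}^{\alpha-\epsilon'}$ for arbitrary $\epsilon'>0$. Since $\phi_{1}$ and $\dot{W}\chi$ are both supported in $[0,\infty)$, Remark~\ref{rem:local soluton} ensures that this localised solution coincides with the solution of~(\ref{eq:sde fractional derivatives}) on a small random interval $[0,T]$. Letting $\epsilon,\epsilon'\downarrow 0$ yields $u\in\mathcal{B}_{3,\infty}^{\alpha}$ for every $\alpha<r-1/2$. The principal technical obstacle is the probabilistic control of the resonant term $\pi(\phi_{1}\ast\dot{W},\dot{W})$ for the singular kernel $\phi_{1}$; once handled by Theorem~\ref{thm: contruction resonant term}, the remainder is a routine verification that the abstract hypotheses of Theorem~\ref{thm:localSol} are satisfied.
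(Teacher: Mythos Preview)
Your argument is correct and follows essentially the same route as the paper: localise the singular kernel $x^{r-1}\ind_{(0,\infty)}$, observe it lies in $\mathcal{B}_{1,\infty}^{\gamma}$ for every $\gamma<r$, invoke Theorem~\ref{thm: contruction resonant term} to lift localised white noise to a convolutional rough path, and then apply Theorem~\ref{thm:localSol}. The only cosmetic slip is that your initially displayed $\phi_{1}$ is the \emph{non}-localised kernel (which is not in $L^{1}$, hence not in any $\mathcal{B}_{1,\infty}^{\gamma}$), whereas the kernel you actually verify hypotheses for is the localised version $\Gamma(r)^{-1}x^{r-1}\chi(x)\ind_{(0,\infty)}(x)$; stating this consistently from the outset would make the write-up cleaner.
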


\begin{proof}
The proof works as the proof of Corollary~\ref{cor:volterra sde}
combined with the observations that the localised kernel function
$\phi(x):=x^{r-1}\phi_{T}$ satisfies $\phi\in\mathcal{B}_{1,\infty}^{\gamma}$
for every $\gamma<r$ and that the sample paths of a Brownian motion
can be considered as convolutional rough paths with regularity $\beta<1/2$
due to Theorem~\ref{thm: contruction resonant term}. 
\end{proof}

\subsection{SDEs with additive Lévy noise}

Stochastic differential equations with an additive Lévy noise constitute
appropriate models for dynamical systems which are subject to external
shocks. Examples of such systems naturally appear in insurance mathematics,
where for instance SDEs with long term memory and additive Lévy noise
are used to model the general reserve process of an insurance company,
cf. \citet{Rolski1999}. More precisely, we consider the stochastic
differential equation
\begin{equation}
u(t)=u_{0}+\int_{0}^{t}\sigma_{1}(u(s))\d s+\int_{0}^{t}\sigma(u(s))\d\theta(s)+L(t),\quad t\in[0,T],\label{eq:SDE with Levy noise}
\end{equation}
where $\theta$ is a fractional Brownian motion and $L$ is a Lévy
process. This type of stochastic differential equations were recently
investigated, e.g., in \citet{Bai2015}.
\begin{cor}
Let $L$ be an $n$-dimensional Lévy process and $\theta$ be a fractional
Brownian motion with Hurst index $H>1/2$. Let $p\in[2,\infty]$ and
$\beta\in(1/2,1)$. Suppose that $p>2$, $u_{0}\in\R$ and $\sigma_{1},\sigma_{2}\in C^{2}$
with $\sigma_{1}(0)=\sigma_{2}(0)=0$. Then, there exists $u\in\mathcal{B}_{p,\infty}^{1/p}\cap L^{\infty}$
which is the unique solution of the stochastic differential equation~(\ref{eq:SDE with Levy noise})
up to a random time~$T$.
\end{cor}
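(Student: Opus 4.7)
The strategy is to recast (\ref{eq:SDE with Levy noise}) on $[0,T]$ as a Volterra equation of the form (\ref{eq:convol}) by absorbing the Lévy process into the initial condition, and then invoke Proposition~\ref{prop:YoungLip with jumps} together with a dilation argument analogous to the proof of Theorem~\ref{thm:localSol} to handle the smallness condition on the vector fields. Since $H>1/2$ the fractional integral is of Young type, so the rough enhancement of Theorem~\ref{thm:solve} is not needed, and the fact that $L$ is discontinuous is exactly what the ``with jumps'' variant of the Young theorem was designed for.

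Concretely, fix a smooth cutoff $\chi$ equal to one on $[-T,T]$ and compactly supported, and recall that $\phi_T$ coincides with $\ind_{[0,\infty)}$ on $[-T,T]$. On $[0,T]$ one has $\int_0^t\sigma_2(u(s))\d\theta(s)=\big(\phi_T\ast(\sigma_2(u)\dot{\theta}\chi)\big)(t)$ and $\int_0^t\sigma_1(u(s))\d s=\big(\phi_T\ast(\sigma_1(u)\chi)\big)(t)$, so (\ref{eq:SDE with Levy noise}) restricted to $[0,T]$ becomes
\[
u(t)=\tilde{u}_0(t)+\big(\phi_T\ast(\sigma_2(u)\tilde{\xi}_1)\big)(t)+\big(\phi_T\ast(\sigma_1(u)\tilde{\xi}_2)\big)(t),\quad t\in\R,
\]
with $\tilde{u}_0:=(u_0+L)\chi$, $\tilde{\xi}_1:=\dot{\theta}\chi$ and $\tilde{\xi}_2:=\chi$. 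The Lévy noise has thereby been moved into the (discontinuous) initial condition, with the rough term carrying $\sigma_2$ and the drift carrying $\sigma_1$ (reversed with respect to the convention of (\ref{eq:convol})).

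Next I would verify the hypotheses of Proposition~\ref{prop:YoungLip with jumps} with $\gamma_1=\gamma_2=1$, $\beta_1=\beta\in(1/2,H)$, $\beta_2\in[\beta,1]$, hence $\alpha=\beta$. For $p$ in the non-empty range $(\max\{2,1/\beta\},\,1/(1-\beta))$ both parameter restrictions $\alpha\in(1/p,1]$ and $\beta_1+1/p>1$ are satisfied. Since the indicator $\ind_{[0,\infty)}$ is of bounded variation on the relevant compact support, $\phi_T\in\mathcal{B}_{1,\infty}^{1}$; the Hölder regularity of fBM gives $\tilde{\xi}_1\in\mathcal{B}_{p,\infty}^{\beta-1}$ almost surely, and $\tilde{\xi}_2=\chi$ belongs to every Besov space. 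The key probabilistic input is that a Lévy process on a compact time interval lies almost surely in $\mathcal{B}_{p,\infty}^{1/p}\cap L^{\infty}$ for every $p>2$, which is classical and stems from the finite $p$-variation of Lévy paths above their Blumenthal--Getoor index; combined with the obvious bound for the constant $u_0$, this yields $\tilde{u}_0\in\mathcal{B}_{p,\infty}^{1/p}\cap L^{\infty}$ almost surely.

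To bypass the flatness condition on $\max_j\|\sigma_j\|_{C^2}$ for arbitrary nonlinearities, I would run the scaling argument from the proof of Theorem~\ref{thm:localSol}: the dilation $u\mapsto u(\lambda\cdot)$ together with a rescaling $\delta\sigma_j$ of the vector fields multiplies the norms of the driving signals and of the initial condition by controlled powers of $\lambda$, while making $\|\delta\sigma_j\|_{C^2}$ arbitrarily small as $\lambda\downarrow 0$. Choosing $\lambda$ small enough---equivalently, restricting to a small random time horizon $T=T(\omega)>0$---Proposition~\ref{prop:YoungLip with jumps} produces a unique fixed point $\tilde{u}\in\mathcal{B}_{p,\infty}^{1/p}\cap L^{\infty}$ of the dilated problem, which inverts to the desired unique solution of (\ref{eq:SDE with Levy noise}) on $[0,T]$. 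The main obstacle is ensuring that the dilation estimates carry through the discontinuous contribution $L\chi$ in the initial condition, i.e., that the Besov norms of the localized and dilated Lévy process stay uniformly bounded as $\lambda$ shrinks; this is where the sample-path theory in the càdlàg rough-path framework of \citep{Chevyrev2017,Friz2012} is invoked.
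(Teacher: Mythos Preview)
Your proposal is correct and follows essentially the same route as the paper: absorb the additive L\'evy term $L$ into the initial condition, recast (\ref{eq:SDE with Levy noise}) on $[0,T]$ as a Volterra equation via the localised kernel $\phi_T$, apply Proposition~\ref{prop:YoungLip with jumps} in the Young regime $H>1/2$, and remove the flatness condition on the $\sigma_j$ by the dilation argument of Theorem~\ref{thm:localSol}. The only minor discrepancy is your closing appeal to \citep{Chevyrev2017,Friz2012} for the dilated L\'evy contribution; the paper instead cites \citep[Proposition~5.31]{Bottcher2013} directly for $L\in\mathcal{B}_{p,\infty}^{1/p}$ almost surely, which together with the standard dilation estimates already suffices---no c\`adl\`ag rough-path machinery is needed here since the L\'evy term is purely additive.
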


\begin{proof}
As in the proof of Corollary~\ref{cor:sde classical} one can reformulate
the SDE~(\ref{eq:SDE with Levy noise}) as a Volterra equation which
coincides with (\ref{eq:SDE with Levy noise}) on the interval~$[0,T]$.
Furthermore, let us recall that the sample paths of a fractional Brownian
motion and of a Lévy process are almost surely in $\mathcal{B}_{p,\infty}^{\beta}$
and $\mathcal{B}_{p,\infty}^{1/p}$ for every $\beta<H$ and $p>2$,
respectively, see for instance \citep[Proposition~2]{Rosenbaum2009}
and \citep[Proposition~5.31]{Bottcher2013}. Hence, we deduce the
assertion from Proposition~\ref{prop:YoungLip with jumps} in combination
with a scaling argument analogously the proof of Theorem~\ref{thm:localSol}
and Remark~\ref{rem:local soluton}.
\end{proof}

\subsection{Stochastic moving average processes driven by Lévy processes}

Moving average processes driven by Lévy processes and in particular
shot noise processes provide a modern toolbox for mathematical modelling
of, e.g., turbulence, signal processing or shot prices on energy markets,
see \citep{Barndorff-Nielsen2013,Barndorff-Nielsen2015} and the references
therein. Allowing these types of models to possess a state dependent
volatility, we consider the stochastic convolution equation
\begin{equation}
u(t)=u_{0}+\int_{\R}\psi(t-s)\sigma(u(s))\d L(s),\quad t\in[0,T],\label{eq:moving average equation}
\end{equation}
where $L$ is a general Lévy process. Because of the desired averaging
property generated by the kernel function, it is naturally to postulate
the assumption of $\psi\in\mathcal{B}_{1,\infty}^{\gamma}$ for $\gamma>1$.
In this case we arrive at the following existence and uniqueness result. 
\begin{cor}
Let $L$ be an $n$-dimensional Lévy process, $p\in(2,\infty]$, $\gamma>1$
and $\alpha=1/p+\gamma-1$. Suppose that $\psi\in\mathcal{B}_{1,\infty}^{\gamma}$
has compact support, $u_{0}\in\R$ and $\sigma\in C^{2}$ with $\sigma(0)=0$.
Then, there exists $u\in\mathcal{B}_{p,\infty}^{\alpha}$ which is
the unique solution of the stochastic convolution equations~(\ref{eq:moving average equation})
up to a random time~$T$.
\end{cor}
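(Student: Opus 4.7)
The plan is to recast~(\ref{eq:moving average equation}) as a special case of the Volterra equation~(\ref{eq:convol}) by setting $\phi_{1}=\psi$, $\sigma_{1}=\sigma$, $\xi_{1}=\dot{L}$ (the distributional derivative of $L$), no drift term ($\sigma_{2}\equiv0$), and paracontrolled initial condition $u_{0}(t)=u_{0}\chi(t)$ for a smooth compactly supported cutoff $\chi$ equal to $1$ on $[-T,T]$, exactly as in the proof of Corollary~\ref{cor:sde classical}. By \citep[Proposition~5.31]{Bottcher2013} L\'evy sample paths lie almost surely in $\mathcal{B}_{p,\infty}^{1/p}$ for every $p>2$, so $\xi_{1}\in\mathcal{B}_{p,\infty}^{1/p-1}$ and one reads off $\beta_{1}=1/p$, $\gamma_{1}=\gamma$, $\alpha=\beta_{1}+\gamma_{1}-1=1/p+\gamma-1$. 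For the sharp choice $p=3$ the rough-path conditions of Theorem~\ref{thm:solve}, $\alpha\in(1/3,1)$ and $2\alpha+\beta_{1}>1$, reduce exactly to the hypothesis $\gamma>1$; the drift condition $\alpha+\beta_{2}>1$ is vacuous, and we are in the regular-kernel regime of Remark~\ref{rem:jumps} since $\gamma>1$.

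The main analytic step is to construct the resonant term $\pi(\psi\ast\dot{L},\dot{L})$, which together with $\dot{L}$ forms the convolutional rough path $(\dot{L},\pi(\psi\ast\dot{L},\dot{L}))\in\mathcal{B}_{p}^{\beta_{1},\gamma_{1}}(\psi)$. Because $\gamma>1$, the generalised Young inequality (Lemma~\ref{lem:Convolution}) yields $\psi\ast\dot{L}\in\mathcal{B}_{p,\infty}^{\gamma+1/p-1}$; combined with $\dot{L}\in\mathcal{B}_{p,\infty}^{1/p-1}$, the sum of Besov regularity indices equals $\gamma+2/p-2$, which is strictly positive for every $p>2$ once $\gamma>1$. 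Bony's resonant-term estimate (Lemma~\ref{lem:paraproduct}(iii)) thus defines $\pi(\psi\ast\dot{L},\dot{L})\in\mathcal{B}_{p/2,\infty}^{\gamma+2/p-2}=\mathcal{B}_{p/2,\infty}^{\alpha+\beta_{1}-1}$ pathwise, so that no separate probabilistic construction as in Section~\ref{sec:resonant term} is required, and standard mollification of $L$ gives the convergence needed for membership in the closure defining $\mathcal{B}_{p}^{\beta_{1},\gamma_{1}}(\psi)$.

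Finally, with the convolutional rough path at hand and the parameter constraints verified, Theorem~\ref{thm:solve} combined with the scaling/localization argument of Theorem~\ref{thm:localSol} and Remark~\ref{rem:local soluton} yields a unique local solution $u\in\mathcal{B}_{p,\infty}^{\alpha}$ up to a random time $T$; the jumps of $\dot{L}$ are compatible with the paracontrolled ansatz thanks to Remark~\ref{rem:jumps}, and $u$ itself is continuous since $\alpha>1/3$. The main technical obstacle is the auxiliary kernel condition $\|(\cdot-r)\psi\|_{\gamma+1,1,\infty}<\infty$ of Lemma~\ref{lem:convolution paracontrolled}, which is genuinely stronger than $\psi\in\mathcal{B}_{1,\infty}^{\gamma}$ alone: here the compact support of $\psi$ enters decisively, allowing one to fix $r$ inside $\supp\psi$ and reduce the bound to multiplier estimates for the smooth compactly supported function $(\cdot-r)\eta$ (with $\eta\in\mathcal{C}^{\infty}_{c}$ identically $1$ on $\supp\psi$) acting on $\psi$, under the implicit understanding that a moving-average kernel is smooth enough for this multiplier estimate to gain the extra derivative.
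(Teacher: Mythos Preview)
Your approach takes a detour through the rough-path machinery of Theorem~\ref{thm:solve}, whereas the paper invokes the Young-type Proposition~\ref{prop:YoungLip} directly (plus the scaling argument of Theorem~\ref{thm:localSol} and Remark~\ref{rem:local soluton}). The point is that with $\beta_{1}=1/p$ and $\gamma_{1}=\gamma>1$ one is in the regular regime where no convolutional rough path is needed at all: Proposition~\ref{prop:YoungLip} requires only $\sigma\in C^{2}$, no resonant term, and no auxiliary condition $\|(\cdot-r)\psi\|_{\gamma+1,1,\infty}<\infty$. Your route forces you to construct $(\dot L,\pi(\psi\ast\dot L,\dot L))$ and to verify the kernel condition of Lemma~\ref{lem:convolution paracontrolled}, both of which you end up hand-waving.

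There is also a concrete error. You assert that the sum of Besov indices $\gamma+2/p-2$ ``is strictly positive for every $p>2$ once $\gamma>1$''. This is false: for $p>2$ one has $2/p<1$, hence $\gamma+2/p-2<\gamma-1$, and positivity requires $\gamma>2-2/p$, not merely $\gamma>1$ (e.g.\ $p=3$, $\gamma=1.1$ gives $\gamma+2/p-2\approx-0.23$). So Lemma~\ref{lem:paraproduct}(iii) does not apply and your pathwise construction of the resonant term breaks down in exactly the parameter range where the rough-path theorem would be genuinely needed. Observe moreover that $\gamma+2/p-2>0$ is \emph{identical} to the Young condition $2\beta_{1}+\gamma_{1}>2$ of Proposition~\ref{prop:YoungLip}; whenever your resonant-term argument succeeds, the simpler proposition already applies, so the detour through Theorem~\ref{thm:solve} buys nothing. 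Similarly, the parameter checks you perform for Theorem~\ref{thm:solve} reduce to $\gamma>1$ only at $p=3$; for larger $p$ they become strictly stronger (e.g.\ $2\alpha+\beta_{1}>1$ becomes $\gamma>3/2-3/(2p)$), which again does not match the hypotheses of the corollary.
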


\begin{proof}
Due to the compact support assumption of $\psi$, we can localise
the equation~(\ref{eq:moving average equation}) such that we obtain
a (localised) Volterra equation which coincides with (\ref{eq:moving average equation})
on the interval~$[0,T]$. Since the sample paths of a Lévy process
are almost surely in $\mathcal{B}_{p,\infty}^{1/p}$ for every $p>2$,
see again \citep[Proposition~5.31]{Bottcher2013}, we conclude the
assertion from Proposition~\ref{prop:YoungLip} in combination with
a scaling argument analogously the proof of Theorem~\ref{thm:localSol}
and Remark~\ref{rem:local soluton}.
\end{proof}

\subsection{Relation to stochastic PDEs}

In general, stochastic Volterra equations are known to have many links
to stochastic partial differential equations. Here we would like to
discuss this link in the case of (a slightly modified version of)
a stochastic evolution equation studied by \citet{Mytnik2015}. We
consider the differential operator $\Delta_{\theta}:=\partial_{x}x^{\theta}\partial_{x}$
(in one space dimension) for a parameter $\theta<2$ and the associated
evolution equation 
\begin{align}
\partial_{t}u(t,x) & =\Delta_{\theta}u(t,x)+\sigma\big(u(t,x)\big)\,\xi(\dd t,\dd x),\label{eq:specialSPDE}\\
u(0,x) & =g(x),\nonumber 
\end{align}
with multiplicative noise, where $\xi$ is the space-time derivative
of $\theta(t,x)=W_{t}\mathbf{1}_{[\eta,\infty)}(x)$ for some $\eta\in\mathbb{R}$,
that is
\[
\xi(\dd t,\dd x)=\dot{W}(\dd t)\,\delta_{\eta}(\dd x).
\]
with Dirac measure $\delta_{\eta}$ in $\eta\in\R$. Note that we
recover the stochastic heat equation with multiplicative noise in
the case $\theta=0$ and the fundamental solution of (\ref{eq:specialSPDE})
with $\xi=0$ is 
\[
p_{t}(x)=\frac{c_{\theta}}{t^{1/(2-\theta)}}\exp\Big(-\frac{x^{2-\theta}}{(2-\theta)^{2}t}\Big)
\]
with normalising constant $c_{\theta}$ such that a mild solution
of (\ref{eq:specialSPDE}) is given by the formula
\begin{align*}
u(t,x) & =\int_{\mathbb{R}}p(t,x-y)g(y)\d y+\int_{0}^{t}\int_{\mathbb{R}}p(t-s,x-y)\sigma\big(u(t,y)\big)\,\xi(\dd s,\dd y)\\
 & =\int_{\mathbb{R}}p(t,x-y)g(y)\d y+\int_{0}^{t}p(t-s,x-\eta)\sigma\big(u(t,\eta)\big)\,\dot{W}(\dd s).
\end{align*}
In particular, the solution process $v(t):=u(t,\eta)$ along the edge
$\{(t,\eta):t\in\R_{+}\}$ solves the singular stochastic Volterra
equation
\begin{align*}
v(t) & =\int_{\mathbb{R}}p(t,\eta-y)g(y)\d y+\int_{0}^{t}p(t-s,0)\sigma\big(v(t)\big)\,\dot{W}(\dd s)\\
 & =\int_{\mathbb{R}}p(t,\eta-y)g(y)\d y+\int_{0}^{t}\frac{c_{\theta}}{(t-s)^{1/(2-\theta)}}\sigma\big(v(t)\big)\,\dot{W}(\dd s).
\end{align*}
For $\theta<-4$ Theorem~\ref{thm:solve} provides the existence
of the pathwise solution process $v(t)$. In the case of the Laplace
operator, i.e. $\theta=0$, the singularity in the kernel is too severe
to directly apply Theorem~\ref{thm:solve} and would require a further
extension of the above theory.  

\appendix

\section{Auxiliary Besov estimates\label{sec:appendix}}

The appendix provides (in the previous sections) frequently used,
but fairly elementary lemmas concerning Besov spaces. The first one
states the invariance of Besov norms under linear shifts. 
\begin{lem}
\label{lem:shiftBesovNorm} Let $\alpha\in\R$, $p\in[1,\infty]$
and $y\in\R^{d}$. If $f\in\mathcal{B}_{p,\infty}^{\alpha}$, then
$f(\cdot+y)\in\mathcal{B}_{p,\infty}^{\alpha}$ with 
\[
\|f\|_{\alpha,p,\infty}=\|f(\cdot+y)\|_{\alpha,p,\infty}.
\]
\end{lem}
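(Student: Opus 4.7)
The plan is to reduce the statement to the translation invariance of the $L^p$ norm by observing that the Littlewood-Paley projectors commute with translations. Writing $\tau_y f := f(\cdot+y)$, the standard identity $\mathcal{F}(\tau_y f)(z) = e^{i\langle z,y\rangle}\mathcal{F}f(z)$ gives, for every $j\ge -1$,
\[
\Delta_j(\tau_y f) = \mathcal{F}^{-1}\big(\rho_j \, e^{i\langle \cdot,y\rangle}\mathcal{F}f\big).
\]
On the other hand, applying the definition of $\mathcal{F}^{-1}$ pointwise,
\[
(\tau_y \Delta_j f)(x) = (2\pi)^{-d}\int e^{i\langle x+y,z\rangle}\rho_j(z)\mathcal{F}f(z)\,\dd z = \mathcal{F}^{-1}\big(\rho_j\,e^{i\langle \cdot,y\rangle}\mathcal{F}f\big)(x).
\]
Comparing the two displays yields the key commutation relation $\Delta_j(\tau_y f) = \tau_y(\Delta_j f)$, which holds a priori for Schwartz functions and extends to $\mathcal{S}'$ by duality/continuity of translation on tempered distributions.

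From there the conclusion is immediate: translation invariance of Lebesgue measure gives $\|\tau_y(\Delta_j f)\|_{L^p} = \|\Delta_j f\|_{L^p}$ for every $j\ge -1$, hence
\[
\|\tau_y f\|_{\alpha,p,\infty} = \sup_{j\ge -1} 2^{j\alpha}\|\Delta_j(\tau_y f)\|_{L^p} = \sup_{j\ge -1} 2^{j\alpha}\|\Delta_j f\|_{L^p} = \|f\|_{\alpha,p,\infty}.
\]
There is no real obstacle here: the only point requiring a line of care is justifying the commutation $\Delta_j\tau_y = \tau_y \Delta_j$ at the level of tempered distributions rather than just Schwartz functions, but this is standard since both operators are continuous on $\mathcal{S}'$ and agree on the dense subspace $\mathcal{S}$.
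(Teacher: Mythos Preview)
Your proof is correct and follows essentially the same approach as the paper: both establish the commutation $\Delta_j(\tau_y f)=\tau_y(\Delta_j f)$ via the Fourier multiplier identity $\mathcal{F}(\tau_y f)=e^{i\langle\cdot,y\rangle}\mathcal{F}f$, and then conclude by translation invariance of the $L^p$ norm. The only (minor) addition you make is the remark on extending the commutation from $\mathcal{S}$ to $\mathcal{S}'$, which the paper leaves implicit.
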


\begin{proof}
For $y\in\R^{d}$ and $f\in\mathcal{B}_{p,\infty}^{\alpha}$, note
that 
\[
\mathcal{F}f(\cdot+y)(z)=\int_{\mathbb{R}^{d}}e^{i\langle z,x-y\rangle}f(x)\d x=\mathcal{F}f(z)e^{i\langle z,y\rangle},\quad z\in\R^{d},
\]
from which we deduce that 
\[
\Delta_{j}f(\cdot+y)(z)=\mathcal{F}^{-1}(\rho_{j}e^{-i\langle\cdot,y\rangle}\mathcal{F}f)(z)=\mathcal{F}^{-1}(\rho_{j}\mathcal{F}f)(z+y).
\]
Therefore, $\|\Delta_{j}f(\cdot+y)\|_{L^{p}}=\|\Delta_{j}f\|_{L^{p}}$
for each $j\geq-1$ and thus $\|f\|_{\alpha,p,\infty}=\|f(\cdot+y)\|_{\alpha,p,\infty}$.
\end{proof}
For sufficiently regular distributions/functions the Besov norm of
a product can be directly estimated and in particular the product
is then a well-defined operation. 
\begin{lem}
\label{lem:product}~

\begin{enumerate}
\item Let $p\in[2,\infty]$, $\alpha\in(1/p,1)$ and $\beta\in(1-\alpha,1)$.
If $f\in\mathcal{B}_{p,\infty}^{\alpha}$ and $g\in\mathcal{B}_{p,\infty}^{\beta-1}$,
then 
\[
\|fg\|_{\beta-1,p,\infty}\lesssim\|f\|_{\alpha,p,\infty}\|g\|_{\beta-1,p,\infty}.
\]
\item Let $p\in[2,\infty]$ and $\beta\in[0,1)$ be such that $\frac{1}{p}+\beta>1$.
If $f\in\mathcal{B}_{p,\infty}^{\frac{1}{p}}\cap L^{\infty}$ and
$g\in\mathcal{B}_{p,\infty}^{\beta-1}$, then 
\[
\|fg\|_{\beta-1,p,\infty}\lesssim\big(\|f\|_{\frac{1}{p},p,\infty}+\|f\|_{\infty}\big)\|g\|_{\beta-1,p,\infty}.
\]
\item Let $p\in[3,\infty]$, $\alpha\in(1/p,1)$ and $\beta>0$ such that
$\alpha+\beta<1$ and $2\alpha+\beta>1$. If $f\in L^{\infty}\cup\mathcal{B}_{p,\infty}^{\alpha}$
and $g\in\mathcal{B}_{p/2,\infty}^{\alpha+\beta-1}$, then 
\[
\|fg\|_{\alpha+\beta-1,p/2,\infty}\lesssim\big(\|f\|_{\infty}\|g\|_{2\alpha+\beta-1,p/3,\infty}\big)\wedge\big(\|f\|_{\alpha,p,\infty}\|g\|_{\alpha+\beta-1,p/2,\infty}\big).
\]
\item Let $p\in[2,\infty]$ and $\alpha\in(1/p,1)$. If $f\in\mathcal{B}_{p,\infty}^{\alpha}$
and $g\in\mathcal{B}_{p,\infty}^{\alpha}$, then 
\[
\|fg\|_{\alpha,p,\infty}\lesssim\|f\|_{\alpha,p,\infty}\|g\|_{\alpha,p,\infty}.
\]
\item If $f\in\mathcal{B}_{p,\infty}^{\frac{1}{p}}\cap L^{\infty}$ and
$g\in\mathcal{B}_{p,\infty}^{\frac{1}{p}}\cap L^{\infty}$ with $p\in[2,\infty]$,
then 
\[
\|fg\|_{\frac{1}{p},p,\infty}\lesssim\big(\|f\|_{\frac{1}{p},p,\infty}+\|f\|_{\infty}\big)\big(\|g\|_{\frac{1}{p},p,\infty}+\|g\|_{\infty}\big).
\]
\end{enumerate}
\end{lem}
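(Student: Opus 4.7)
The proof of each part of Lemma~\ref{lem:product} will proceed by Bony's decomposition
\[
fg = T_f g + T_g f + \pi(f,g),
\]
bounding each of the three pieces via Lemma~\ref{lem:paraproduct} and then using suitable Besov embeddings of the form $\mathcal{B}_{p/2,\infty}^{\alpha+\beta-1}\subset\mathcal{B}_{p,\infty}^{\beta-1}$, which hold when the gap in the integrability index is compensated by the gap in the regularity index ($\alpha\ge 1/p$). Throughout, the assumption $\alpha>1/p$ will be used to embed $\mathcal{B}_{p,\infty}^{\alpha}\subset L^{\infty}$, so that $f$ can be placed into $L^{\infty}$ whenever needed by paraproduct estimate~(i).

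For parts~(i) and~(ii), the three pieces are handled as follows. The paraproduct $T_f g$ is bounded in $\mathcal{B}_{p,\infty}^{\beta-1}$ directly from Lemma~\ref{lem:paraproduct}(i) using $\|f\|_\infty\lesssim\|f\|_{\alpha,p,\infty}$ in~(i), or the explicit $L^\infty$-norm of $f$ in~(ii). The paraproduct $T_g f$, together with the resonant term $\pi(f,g)$, lies in $\mathcal{B}_{p/2,\infty}^{\alpha+\beta-1}$ by Lemma~\ref{lem:paraproduct}(ii) (using $\beta-1<0$) and~(iii) (using the Young-type assumption $\alpha+\beta-1>0$, respectively $1/p+\beta-1>0$), and then embed into $\mathcal{B}_{p,\infty}^{\beta-1}$. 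Parts~(iv) and~(v) use exactly the same template, where now the resonant term $\pi(f,g)$ uses Lemma~\ref{lem:paraproduct}(iii) with sum of regularities $2\alpha>0$ (respectively $2/p>0$), producing an element of $\mathcal{B}_{p/2,\infty}^{2\alpha}$ which embeds into $\mathcal{B}_{p,\infty}^{\alpha}$ precisely when $\alpha\ge 1/p$; the paraproducts $T_f g$ and $T_g f$ are symmetric and handled by Lemma~\ref{lem:paraproduct}(i) with the $L^\infty$-embedding (or the bare $\|\cdot\|_\infty$ in~(v)).

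Part~(iii) is the most delicate and is really two estimates combined by $\wedge$. For the bound involving $\|f\|_{\alpha,p,\infty}\|g\|_{\alpha+\beta-1,p/2,\infty}$, the strategy is again Bony's decomposition: $T_f g$ via Lemma~\ref{lem:paraproduct}(i) (with $f\in L^\infty$), $T_g f$ via Lemma~\ref{lem:paraproduct}(ii) with $\alpha+\beta-1<0$ (which is the assumption $\alpha+\beta<1$, giving output regularity $\alpha+(\alpha+\beta-1)=2\alpha+\beta-1$ then downgraded to $\alpha+\beta-1$ by monotonicity of the Besov scale in regularity), and $\pi(f,g)$ via Lemma~\ref{lem:paraproduct}(iii) using the critical condition $2\alpha+\beta-1>0$. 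For the alternative bound involving $\|f\|_\infty\|g\|_{2\alpha+\beta-1,p/3,\infty}$, the decomposition is applied with $f$ placed into $L^\infty$: $T_f g$ gains no regularity but sits in the better space $\mathcal{B}_{p/3,\infty}^{2\alpha+\beta-1}$ embedded into $\mathcal{B}_{p/2,\infty}^{\alpha+\beta-1}$ (using $\alpha\ge 1/p$), while $T_g f$ and $\pi(g,f)$ are again treated by Lemma~\ref{lem:paraproduct}(ii) and~(iii), respectively.

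The main technical points that deserve care are the repeated use of Besov embedding to convert the $p/2$- or $p/3$-integrability output of the resonant term back to $p$ (which rests consistently on $\alpha\ge 1/p$, respectively $\beta\ge 1-1/p$), and, in part~(iii), keeping track of which of the two bounds applies depending on whether one prefers to spend the $L^\infty$-norm of $f$ or its Besov regularity. None of the steps is substantially harder than a routine application of the paraproduct calculus, so no single step is a genuine obstacle; the main care is bookkeeping the admissible exponent pairs so that each invocation of Lemma~\ref{lem:paraproduct} is within its hypothesis.
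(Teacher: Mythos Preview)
Your proposal is correct and follows essentially the same approach as the paper: Bony's decomposition together with the paraproduct estimates of Lemma~\ref{lem:paraproduct} and the Besov embeddings $\mathcal{B}_{p/2,\infty}^{\alpha+\beta-1}\subset\mathcal{B}_{p,\infty}^{\beta-1}$ and $\mathcal{B}_{p,\infty}^{\alpha}\subset L^{\infty}$ (for $\alpha>1/p$). The only minor organisational difference is in part~(iii), where the paper bounds $T_f g$ and $T_g f$ once in terms of $\|f\|_{\infty}\|g\|_{\alpha+\beta-1,p/2,\infty}$ and then recovers the $\|g\|_{2\alpha+\beta-1,p/3,\infty}$ bound via a final embedding, whereas you run the two branches of the $\wedge$ separately; both are equivalent.
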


\begin{proof}
Applying Besov embedding ($\alpha>1/p$) and Bony's estimates (Lemma~\ref{lem:paraproduct})
lead to:
\begin{align*}
(i) &  & \|fg\|_{\beta-1,p,\infty} & \lesssim\|T_{f}g\|_{\beta-1,p,\infty}+\|\pi(f,g)\|_{\alpha+\beta-1,p/2,\infty}+\|T_{g}f\|_{\alpha+\beta-1,p/2,\infty}\\
 &  &  & \lesssim\|f\|_{\alpha,p,\infty}\|g\|_{\beta-1,p,\infty},\\
(ii) &  & \|fg\|_{\beta-1,p,\infty} & \lesssim\|T_{f}g\|_{\beta-1,p,\infty}+\|\pi(f,g)\|_{\frac{1}{p}+\beta-1,p/2,\infty}+\|T_{g}f\|_{\frac{1}{p}+\beta-1,p/2,\infty}\\
 &  &  & \lesssim\big(\|f\|_{\frac{1}{p},p,\infty}+\|f\|_{\infty}\big)\|g\|_{\beta-1,p,\infty},\\
(iii) &  & \|fg\|_{\alpha+\beta-1,p/2,\infty} & \lesssim\|T_{f}g\|_{\alpha+\beta-1,p/2,\infty}+\|\pi(f,g)\|_{2\alpha+\beta-1,p/3,\infty}+\|T_{g}f\|_{\alpha+\beta-1,p/2,\infty}\\
 &  &  & \lesssim\|f\|_{\infty}\|g\|_{\alpha+\beta-1,p/2,\infty}+\|g\|_{\alpha+\beta-1,p/2,\infty}\|f\|_{0,\infty,\infty}\\
 &  &  & \qquad+\big(\|f\|_{0,\infty,\infty}\|g\|_{2\alpha+\beta-1,p/3,\infty}\wedge\|f\|_{\alpha,p,\infty}\|g\|_{\alpha+\beta-1,p/2,\infty}\big)\\
 &  &  & \lesssim\big(\|f\|_{\infty}\|g\|_{2\alpha+\beta-1,p/3,\infty}\big)\wedge\big(\|f\|_{\alpha,p,\infty}\|g\|_{\alpha+\beta-1,p/2,\infty}\big),\\
(vi) &  & \|fg\|_{\alpha,p,\infty} & \lesssim\|T_{f}g\|_{\alpha,p,\infty}+\|\pi(f,g)\|_{2\alpha,p/2,\infty}+\|T_{g}f\|_{a,p,\infty}\lesssim\|f\|_{\alpha,p,\infty}\|g\|_{a,p,\infty},\\
(v) &  & \|fg\|_{\frac{1}{p},p,\infty} & \lesssim\|T_{f}g\|_{\frac{1}{p},p,\infty}+\|\pi(f,g)\|_{\frac{2}{p},p/2,\infty}+\|T_{g}f\|_{\frac{1}{p},p,\infty}\\
 &  &  & \lesssim\big(\|f\|_{\frac{1}{p},p,\infty}+\|f\|_{\infty}\big)\big(\|g\|_{\frac{1}{p},p,\infty}+\|g\|_{\infty}\big).\tag*{{\qedhere}}
\end{align*}
\end{proof}
The following estimates are crucial to obtain the existence of a solution
to the Volterra equation~(\ref{eq:convol}) and the local Lipschitz
continuity of the corresponding Itô-Lyons map~(\ref{eq:itomap}).
\begin{lem}
\label{lem:differenceBesovNorm}~

\begin{enumerate}
\item Let $\alpha>0$ and $p\in[1,\infty]$. If $f\in\mathcal{B}_{p,\infty}^{\alpha}\cap L^{\infty}$
and $F\in C^{\lceil\alpha\rceil}\mbox{ with }F(0)=0$, then
\[
\|F(f)\|_{\alpha,p,\infty}\lesssim\|F\|_{C^{\lceil\alpha\rceil}}\|f\|_{\alpha,p,\infty}.
\]
\item Let $\alpha\in(1/p,1]$ and $p\in[2,\infty]$. If $f,g\in\mathcal{B}_{p,\infty}^{\alpha}$
and $F\in C^{2}$, then
\[
\|F(f)-F(g)\|_{\alpha,p,\infty}\lesssim\|F\|_{C^{2}}\big(1+\|f\|_{\alpha,p,\infty}+\|g\|_{\alpha,p,\infty}\big)\|f-g\|_{\alpha,p,\infty}.
\]
\item If $f,g\in\mathcal{B}_{p,\infty}^{\frac{1}{p}}\cap L^{\infty}$ and
$F\in C^{2}$ with $p\in[2,\infty]$, then
\[
\|F(f)-F(g)\|_{\frac{1}{p},p,\infty}\lesssim\|F\|_{C^{2}}\big(1+\|f\|_{\frac{1}{p},p,\infty}+\|f\|_{\infty}+\|g\|_{\frac{1}{p},p,\infty}+\|g\|_{\infty}\big)\big(\|f-g\|_{\frac{1}{p},p,\infty}+\|f-g\|_{\infty}\big).
\]
\end{enumerate}
\end{lem}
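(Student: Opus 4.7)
All three bounds reduce, via the first-order identity
\[
F(f)-F(g) = (f-g)\int_{0}^{1} F'(g+s(f-g))\,\dd s,
\]
to part~(i) combined with Bony-type product estimates. I would therefore prove (i) first. For $\alpha\in(0,1)$ the cleanest route is the difference characterization
\[
\|f\|_{\alpha,p,\infty}\sim\|f\|_{L^{p}}+\sup_{0<|h|\le 1}|h|^{-\alpha}\|f(\cdot+h)-f\|_{L^{p}}.
\]
The hypothesis $F(0)=0$ together with $\|F'\|_{\infty}\le\|F\|_{C^{1}}$ gives pointwise $|F(f(x))|\le\|F'\|_{\infty}|f(x)|$ and $|F(f(x+h))-F(f(x))|\le\|F'\|_{\infty}|f(x+h)-f(x)|$, so both contributions are inherited from $f$. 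For $\alpha=1$ one passes to the Zygmund-type second-difference characterization and expands $F$ to second order (which is why $f\in L^{\infty}$ enters). For $\alpha>1$ one iterates: $(F\circ f)'=F'(f)f'$ together with Faà di Bruno plus the algebra/product estimates give the bound with $F\in C^{\lceil\alpha\rceil}$.

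For parts (ii) and (iii) I would use the decomposition
\[
F(f)-F(g) \;=\; F'(0)(f-g)\;+\;(f-g)\,G_{0},\qquad G_{0}:=\int_{0}^{1}\bigl(F'(g+s(f-g))-F'(0)\bigr)\,\dd s.
\]
The first summand contributes $|F'(0)|\,\|f-g\|_{\alpha,p,\infty}\le\|F\|_{C^{1}}\|f-g\|_{\alpha,p,\infty}$. For the second summand I appeal to the appropriate algebra estimate: in (ii), where $\alpha\in(1/p,1]$ ensures $\mathcal{B}_{p,\infty}^{\alpha}\hookrightarrow L^{\infty}$, Lemma~\ref{lem:product}(iv) gives $\|(f-g)G_{0}\|_{\alpha,p,\infty}\lesssim\|f-g\|_{\alpha,p,\infty}\|G_{0}\|_{\alpha,p,\infty}$; in (iii), which sits at the critical regularity $\alpha=1/p$ where the $L^{\infty}$ embedding fails, Lemma~\ref{lem:product}(v) yields the analogous bound with both the Besov and the $L^{\infty}$ norm of $G_{0}$ on the right. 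The Besov bound for $G_{0}$ follows by applying part~(i) (with $\lceil\alpha\rceil=1$) to $F'-F'(0)\in C^{1}$, which vanishes at the origin, and to the argument $g+s(f-g)$, uniformly in $s\in[0,1]$; this gives
\[
\|G_{0}\|_{\alpha,p,\infty}\;\lesssim\;\|F\|_{C^{2}}\bigl(\|f\|_{\alpha,p,\infty}+\|g\|_{\alpha,p,\infty}\bigr).
\]
The $L^{\infty}$ bound needed in (iii) is the elementary mean-value estimate $|G_{0}|\le\|F''\|_{\infty}(\|f\|_{\infty}+\|g\|_{\infty})$. Collecting the two contributions produces the claimed Lipschitz estimates.

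The only genuine obstacle is that constants do not lie in $\mathcal{B}_{p,\infty}^{\alpha}$ when $p<\infty$, which prevents a direct application of part~(i) to $F'$ itself: one cannot define $\|F'(h)\|_{\alpha,p,\infty}$ without first subtracting $F'(0)$. Splitting off the constant part of $F'$ before invoking (i) on $F'-F'(0)$ is therefore the key technical move; the rest of the argument is a bookkeeping exercise with the paraproduct and embedding estimates already established in Lemmas~\ref{lem:paraproduct} and~\ref{lem:product}.
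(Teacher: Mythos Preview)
Your proposal is correct and follows essentially the same route as the paper. For~(i) the paper simply cites \cite[Theorem~2.87]{Bahouri2011}, whereas you sketch the standard difference-characterization argument; for~(ii) and~(iii) both you and the paper use the integral identity $F(f)-F(g)=(f-g)\int_{0}^{1}F'(g+s(f-g))\,\dd s$ together with Lemma~\ref{lem:product}(iv)/(v) and part~(i). The one genuine addition in your write-up is the explicit splitting $F'=F'(0)+(F'-F'(0))$ to ensure the composition estimate from~(i) is applied to a function vanishing at the origin; the paper writes $\|F'(f+s(g-f))\|_{\alpha,p,\infty}\lesssim\|F\|_{C^{2}}(1+\|f\|_{\alpha,p,\infty}+\|g\|_{\alpha,p,\infty})$ directly and absorbs the constant part into the ``$1+$'' without comment, so your version is simply a more careful execution of the same idea.
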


\begin{proof}
(i) can be deduced from \citep[Theorem~2.87]{Bahouri2011}. 

For (ii) we apply Lemma~\ref{lem:product} (iv) and the first part
of this lemma to obtain

\begin{align*}
\|F(f)-F(g)\|_{\alpha,p,\infty} & \leq\int_{0}^{1}\|F^{\prime}(f+s(g-f))(f-g)\|_{\alpha,p,\infty}\d s\\
 & \lesssim\|f-g\|_{\alpha,p,\infty}\int_{0}^{1}\|F^{\prime}(f+s(g-f))\|_{\alpha,p,\infty}\d s\\
 & \lesssim\|F\|_{C^{2}}\big(1+\|f\|_{\alpha,p,\infty}+\|g\|_{\alpha,p,\infty}\big)\|f-g\|_{\alpha,p,\infty}.
\end{align*}
For (iii) we apply an analogous estimate, but use Lemma~\ref{lem:product}
(v) instead of (iv).
\end{proof}
We also need this linearization lemma:
\begin{lem}
\label{lem:linearization} Let $\sigma\in C^{2}$, $p\ge1$ and $\alpha>1/p$.
Supposing $u=T_{u^{(1)}}w_{1}+T_{u^{(2)}}w_{2}+u^{\#}\in\mathcal{B}_{p,\infty}^{\alpha}$
with $u^{(1)},u^{(2)},w_{1},w_{2}\in\mathcal{B}_{p,\infty}^{\alpha}$
and $u^{\#}\in\mathcal{B}_{p/2,\infty}^{2\alpha}$, we have
\[
\sigma(u)=\sigma(0)+T_{\sigma'(u)}u+S_{\sigma}(u)
\]
for a function $S_{\sigma}(u)\in\mathcal{B}_{p/2,\infty}^{2\alpha}$
satisfying 
\[
\|S_{\sigma}(u)\|_{2\alpha,p/2,\infty}\lesssim\|\sigma\|_{C^{2}}\Big(1+\sum_{j=1,2}\|u^{(j)}\|_{\infty}\|w_{j}\|_{\alpha,p,\infty}\Big)\big(\|u\|_{\alpha,p,\infty}+\|u^{\#}\|_{2\alpha,p/2,\infty}\big).
\]
\end{lem}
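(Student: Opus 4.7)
The plan is to start from the defining identity $S_\sigma(u) := \sigma(u) - \sigma(0) - T_{\sigma'(u)} u$ and rewrite it via Taylor's theorem combined with Bony's decomposition so that each piece can be bounded separately. Writing $\sigma(u) - \sigma(0) = \sigma'(u)\,u + R(u)$ with the pointwise Taylor remainder $R(u)(x) = -\int_0^{u(x)} s\,\sigma''(s)\,\mathrm{d} s$, and decomposing $\sigma'(u)\cdot u = T_{\sigma'(u)} u + T_u \sigma'(u) + \pi(\sigma'(u), u)$ by~(\ref{eq:bony decomposition}), one obtains the identity
\[
S_\sigma(u) = T_u \sigma'(u) + \pi(\sigma'(u), u) + R(u).
\]
Since each of the three summands naturally sits in $\mathcal{B}_{p/2,\infty}^{2\alpha}$, this identity simultaneously confirms $S_\sigma(u)\in\mathcal{B}_{p/2,\infty}^{2\alpha}$ and reduces the quantitative bound to three manageable estimates.

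For $T_u \sigma'(u)$ and $\pi(\sigma'(u), u)$ I would apply Bony's paraproduct and resonant estimates (Lemma~\ref{lem:paraproduct}), together with the composition bound $\|\sigma'(u) - \sigma'(0)\|_{\alpha,p,\infty} \lesssim \|\sigma\|_{C^2}\|u\|_{\alpha,p,\infty}$ obtained from Lemma~\ref{lem:differenceBesovNorm}(i) applied to $\sigma' - \sigma'(0)$. This produces a naive bound of order $\|\sigma\|_{C^2}\|u\|_{\alpha,p,\infty}^2$. To convert the quadratic factor into the claimed quasi-linear form, I would demote one copy of $\|u\|_{\alpha,p,\infty}$ to $\|u\|_\infty$ via the embedding $\mathcal{B}_{p,\infty}^\alpha \subset L^\infty$ (valid since $\alpha > 1/p$), and then bound $\|u\|_\infty$ using the paracontrolled ansatz: by Lemma~\ref{lem:paraproduct}(i), the embedding $\mathcal{B}_{p/2,\infty}^{2\alpha}\subset L^\infty$, and the Besov estimate $\|T_{u^{(j)}}w_j\|_\infty \lesssim \|u^{(j)}\|_\infty\|w_j\|_{\alpha,p,\infty}$,
\[
\|u\|_\infty \,\le\, \sum_{j=1,2}\|T_{u^{(j)}} w_j\|_\infty + \|u^{\#}\|_\infty \,\lesssim\, \sum_{j=1,2}\|u^{(j)}\|_\infty\|w_j\|_{\alpha,p,\infty} + \|u^{\#}\|_{2\alpha,p/2,\infty}.
\]
Substituting this into the quadratic estimate yields precisely the product structure $\bigl(1 + \sum_j \|u^{(j)}\|_\infty\|w_j\|_{\alpha,p,\infty}\bigr)\bigl(\|u\|_{\alpha,p,\infty} + \|u^{\#}\|_{2\alpha,p/2,\infty}\bigr)$ appearing in the statement.

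For the Taylor remainder I would write $R(u) = u^2 \cdot \psi(u)$ with $\psi(u) := -\int_0^1 (1-t)\sigma''(tu)\,\mathrm{d} t$, bound $\psi(u)$ in $\mathcal{B}_{p,\infty}^\alpha \cap L^\infty$ by $\|\sigma\|_{C^2}(1 + \|u\|_{\alpha,p,\infty})$ using Lemma~\ref{lem:differenceBesovNorm}(i), and control $u^2$ in $\mathcal{B}_{p,\infty}^\alpha$ by Lemma~\ref{lem:product}(iv). Multiplying these estimates and once more routing one factor of $u$ through the paracontrolled $L^\infty$ bound gives the same shape of estimate. The main obstacle is the bookkeeping: in each of the three summands one has to funnel exactly one copy of $u$ through the paracontrolled $L^\infty$ estimate while keeping the other factor (and the composition bounds on $\sigma'(u)$ and $\psi(u)$) in $\mathcal{B}_{p,\infty}^\alpha$, so that the three contributions assemble into the single product bound of the statement without producing unwanted cross terms such as $\|u^{\#}\|_{2\alpha,p/2,\infty}\|u\|_{\alpha,p,\infty}^2$.
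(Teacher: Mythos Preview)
Your identity $S_\sigma(u)=T_u\sigma'(u)+\pi(\sigma'(u),u)+R(u)$ is correct, but the assertion that ``each of the three summands naturally sits in $\mathcal{B}_{p/2,\infty}^{2\alpha}$'' is false, and this is where the argument breaks down. The paraproduct estimates in Lemma~\ref{lem:paraproduct} give $T_u\sigma'(u)$ only the regularity of its \emph{second} argument, i.e.\ $\alpha$; part~(ii), which would sum the regularities, requires the first argument to have \emph{negative} regularity. Likewise $R(u)=G(u)$ with $G\in C^1$, $G(0)=0$, and composition with a $C^1$ map (Lemma~\ref{lem:differenceBesovNorm}(i)) preserves Besov regularity but never doubles it, so $R(u)\in\mathcal{B}_{p,\infty}^{\alpha}$ only. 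Your proposed bounds on $u^2$ via Lemma~\ref{lem:product}(iv) and on $\psi(u)$ via Lemma~\ref{lem:differenceBesovNorm}(i) both land in $\mathcal{B}_{p,\infty}^{\alpha}$, not $\mathcal{B}_{p/2,\infty}^{2\alpha}$.

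The quadratic case $\sigma(y)=y^2/2$ makes the failure explicit: then $T_u\sigma'(u)=T_uu=\tfrac12\big(u^2-\pi(u,u)\big)$ and $R(u)=-u^2/2$, each of which is only $\mathcal{B}^{\alpha}$-regular since $u^2$ is generically no smoother than $u$. Their sum, however, equals $-\tfrac12\pi(u,u)\in\mathcal{B}_{p/2,\infty}^{2\alpha}$. The gain to $2\alpha$ thus comes from a cancellation between $T_u\sigma'(u)$ and $R(u)$ that a term-by-term estimate cannot capture.

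The paper itself does not prove the lemma from scratch but defers to Step~1 of \citep[Proposition~5.6]{Promel2015}. The argument there (the classical Bony--Meyer paralinearisation) proceeds via the Littlewood--Paley telescoping sum
\[
\sigma(u)-\sigma(S_{-1}u)=\sum_{j\ge0}\big(\sigma(S_ju)-\sigma(S_{j-1}u)\big),
\]
Taylor-expanding each increment around $S_{j-1}u$. The second-order remainders are quadratic in $\Delta_ju$ and therefore contribute $\|\Delta_ju\|_{L^p}^2\lesssim2^{-2j\alpha}$ at frequency $\sim2^j$, summing directly into $\mathcal{B}_{p/2,\infty}^{2\alpha}$; the first-order part $\sum_j\sigma'(S_{j-1}u)\Delta_ju$ is then compared to $T_{\sigma'(u)}u=\sum_jS_{j-1}\sigma'(u)\,\Delta_ju$. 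The paracontrolled structure of $u$ enters only in refining the constant to the stated form. It is this block-wise organisation, not a global pointwise Taylor expansion followed by Bony's decomposition, that produces the $2\alpha$ regularity.
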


\begin{proof}
The proof follows from Step 1 in the proof of \citep[Proposition~5.6]{Promel2015}
with $\tilde{u}=u$ and $v_{u}=T_{u^{(1)}}w_{1}+T_{u^{(2)}}w_{2}$.
\end{proof}
A refinement of \citep[Lemma 2.3]{Promel2015} is given by the following
result:
\begin{lem}
\label{lem:dialation} Let $\lambda,\gamma>0$, $p\ge1$ and $f\in\mathcal{B}_{p,\infty}^{\gamma}$.
We have for any $\gamma'\in[0,\gamma)\cap[0,1/p]$:
\begin{enumerate}
\item If $\chi\in\mathcal{B}_{1/\gamma',\infty}^{\gamma}$, then
\[
\|\chi\Lambda_{\lambda}f\|_{\gamma,p,\infty}\lesssim\lambda^{\gamma'-1/p}|\log\lambda|\|f\|_{\gamma,p,\infty}\|\chi\|_{\gamma,1/\gamma',\infty}.
\]
\item If additionally $xf(x)\in\mathcal{B}_{p,\infty}^{\gamma+1+\epsilon}$
for some $\epsilon>0$, then we have for any functions $\chi_{1},\chi_{2}$
such that $C_{\chi}:=\|\chi_{1}\|_{\gamma+1,\infty,\infty}(\|\chi_{2}\|_{\gamma+1,p,\infty}+\|x\chi_{2}(x)\|_{L^{1/\gamma'}})$
is finite and for any $\lambda\in(0,1)$
\[
\big\|\chi_{1}(x)\chi_{2}(x)\Lambda_{\lambda}\big(xf(x)\big)\big\|_{\gamma+1,p,\infty}\lesssim\lambda^{1+\gamma'-1/p}|\log\lambda|C_{\chi}\big(\|xf(x)\|_{\gamma+1+\epsilon,p,\infty}+\|f\|_{\gamma,p,\infty}\big).
\]
\end{enumerate}
\end{lem}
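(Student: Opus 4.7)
The plan is to reduce both estimates to Bony's paraproduct decomposition combined with the basic dilation estimate \citep[Lem.~2.3]{Promel2015} and the Besov embedding $\mathcal{B}_{1/\gamma',\infty}^{\gamma}\subset L^{1/\gamma'}$. The guiding principle is that every unit of integrability we ``borrow'' from $\chi$ (i.e., placing it in $L^{1/\gamma'}$ rather than $L^\infty$) lets us place $\Lambda_\lambda f$ in a larger $L^q$-space, where $1/q=1/p-\gamma'$, and the naive dilation scaling $\lambda^{-1/q}=\lambda^{\gamma'-1/p}$ then produces exactly the target exponent.

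For part~(i), I would decompose
\[
\chi\Lambda_\lambda f = T_\chi(\Lambda_\lambda f) + T_{\Lambda_\lambda f}\chi + \pi(\chi,\Lambda_\lambda f)
\]
and treat each term with Lemma~\ref{lem:paraproduct} using the H\"older split $1/p=\gamma'+1/q$. For $T_\chi(\Lambda_\lambda f)$, Lemma~\ref{lem:paraproduct}(i) gives $\|T_\chi(\Lambda_\lambda f)\|_{\gamma,p,\infty}\lesssim\|\chi\|_{L^{1/\gamma'}}\|\Lambda_\lambda f\|_{\gamma,q,\infty}$, and \citep[Lem.~2.3]{Promel2015} combined with the Besov embedding $\mathcal{B}_{p,\infty}^\gamma\subset\mathcal{B}_{q,\infty}^{\gamma-\gamma'}$ (losing the $\gamma'$-units of regularity caused by the change of integrability exponent) yields the required bound. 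The term $T_{\Lambda_\lambda f}\chi$ is handled analogously, placing $\Lambda_\lambda f\in L^q$ via $\|\Lambda_\lambda f\|_{L^q}=\lambda^{-1/q}\|f\|_{L^q}$ and using $\chi\in\mathcal{B}_{1/\gamma',\infty}^\gamma$. For the resonant piece $\pi(\chi,\Lambda_\lambda f)$ we use Lemma~\ref{lem:paraproduct}(iii) at regularity $2\gamma>\gamma$, which lands in $\mathcal{B}_{p,\infty}^\gamma$ by embedding. The logarithmic factor $|\log\lambda|$ absorbs the $(1+\lambda^\gamma|\log\lambda|)$ correction from the dilation lemma uniformly in $\lambda$ and also covers the borderline case $\gamma'=1/p$ where the Besov embedding degenerates.

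Part~(ii) is the genuinely delicate step. The starting observation is the identity $\Lambda_\lambda\bigl(xf(x)\bigr)=\lambda x\,\Lambda_\lambda f(x)$, which produces the extra factor $\lambda$ needed to upgrade the exponent from $\lambda^{\gamma'-1/p}$ to $\lambda^{1+\gamma'-1/p}$. I would apply Bony's decomposition on $\chi_1\chi_2\Lambda_\lambda g$ at regularity level $\gamma+1$, where $g:=xf(x)\in\mathcal{B}_{p,\infty}^{\gamma+1+\epsilon}$, treating $\chi_1$ in $L^\infty$ (via $\|\chi_1\|_{\gamma+1,\infty,\infty}$) and keeping the pairing of $\chi_2$ with $\Lambda_\lambda g$ under the same H\"older split as in~(i). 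Terms for which the Bony pairing naturally carries the full regularity $\gamma+1$ of $g$ are bounded directly via part~(i)-type arguments (the $\epsilon$-margin in $\gamma+1+\epsilon$ absorbs the log correction from the dilation estimate and the shift of integrability exponent). Terms for which this fails are rewritten via the prefactor identity as $\lambda\,\chi_1(x\chi_2(x))\Lambda_\lambda f$, where the $L^{1/\gamma'}$-bound on $x\chi_2(x)$ in $C_\chi$ plays the role of the bound on $\chi$ in~(i); the extra $\lambda$ then provides the last power needed in the final exponent.

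The main obstacle is the bookkeeping in part~(ii): since $\chi_1$ and $\chi_2$ enter with different norms (one in $L^\infty$-based Besov, the other through $\|\chi_2\|_{\gamma+1,p,\infty}+\|x\chi_2(x)\|_{L^{1/\gamma'}}$), each of the three Bony pieces of $(\chi_1\chi_2)\cdot\Lambda_\lambda g$ must be associated with the correct mechanism — either ``use the $\epsilon$-regularity of $xf$'' or ``pull out $\lambda$ via $\Lambda_\lambda(xf)=\lambda x\Lambda_\lambda f$'' — so that neither the $\lambda$-scaling degrades nor a high-regularity norm on $f$ (beyond $\|f\|_{\gamma,p,\infty}$ and $\|xf\|_{\gamma+1+\epsilon,p,\infty}$) is required. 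Once this combinatorial split is fixed, each term reduces to a direct application of Lemma~\ref{lem:paraproduct}, \citep[Lem.~2.3]{Promel2015}, and standard Besov embeddings.
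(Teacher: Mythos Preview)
Your approach has a genuine gap in the treatment of $T_{\chi}(\Lambda_{\lambda}f)$ (and, by the same mechanism, of $\pi(\chi,\Lambda_{\lambda}f)$). With the H\"older split $1/p=\gamma'+1/q$, Lemma~\ref{lem:paraproduct}(i) forces you to control $\|\Lambda_{\lambda}f\|_{\gamma,q,\infty}$ in order to land in $\mathcal{B}_{p,\infty}^{\gamma}$. But $f\in\mathcal{B}_{p,\infty}^{\gamma}$ only embeds into $\mathcal{B}_{q,\infty}^{\gamma-\gamma'}$, as you note yourself; this does \emph{not} control $\|\Lambda_{\lambda}f\|_{\gamma,q,\infty}$, and if you instead feed in $\|\Lambda_{\lambda}f\|_{\gamma-\gamma',q,\infty}$ the output regularity drops to $\gamma-\gamma'$. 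Switching to $\chi\in L^{\infty}$ does not help either: then you need the full norm $\|\Lambda_{\lambda}f\|_{\gamma,p,\infty}$, and the dilation lemma gives only $(1+\lambda^{\gamma}|\log\lambda|)\lambda^{-1/p}\sim\lambda^{-1/p}$ for $\lambda<1$, because the $\Delta_{-1}$ block of $\Lambda_{\lambda}f$ contributes $\|\Lambda_{\lambda}f\|_{L^{p}}=\lambda^{-1/p}\|f\|_{L^{p}}$ with no gain. In short, a black-box application of Lemma~\ref{lem:paraproduct} plus the dilation lemma cannot produce the improved exponent $\lambda^{\gamma'-1/p}$; the obstruction is precisely the low-frequency part of $\Lambda_{\lambda}f$.

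The paper resolves this by first splitting off the low Littlewood--Paley blocks of the product and treating them via the identity $\chi\Lambda_{\lambda}f=\Lambda_{\lambda}\big(\chi(\lambda^{-1}\cdot)f\big)$: then the low blocks are bounded by $\lambda^{-1/p}|\log\lambda|\,\|\chi(\lambda^{-1}\cdot)f\|_{L^{p}}$, and H\"older gives $\|\chi(\lambda^{-1}\cdot)f\|_{L^{p}}\le\|\chi(\lambda^{-1}\cdot)\|_{L^{1/\gamma'}}\|f\|_{L^{q}}=\lambda^{\gamma'}\|\chi\|_{L^{1/\gamma'}}\|f\|_{L^{q}}$, which is exactly the missing factor. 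Only for the remaining high blocks is Bony's decomposition used, and there $T_{\chi}(\Lambda_{\lambda}f)$ is paired as $\chi\in L^{\infty}$ against the \emph{high-block} part $\sup_{k\ge0}2^{k\gamma}\|\Delta_{k}\Lambda_{\lambda}f\|_{L^{p}}\lesssim\lambda^{\gamma-1/p}|\log\lambda|\|f\|_{\gamma,p,\infty}$, which is even better than required. The same low/high splitting is what makes part~(ii) go through; there the paper recycles (i) and, crucially, uses (i) once more with $\lambda^{-1}$ in place of $\lambda$ (applied to $\chi_{2}$) to bound $\|\chi_{2}(\lambda^{-1}\cdot)xf(x)\|_{\gamma+1,p,\infty}$ uniformly, which is where the $\epsilon$-cushion in $\|xf(x)\|_{\gamma+1+\epsilon,p,\infty}$ is spent.
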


\begin{proof}
We decompose $\chi\Lambda_{\lambda}f$ into small and larger Littlewood-Paley
blocks. Arguing as in \citep[Lemma 2.3]{Promel2015} for the $\Delta_{-1}$
block, we have for the small blocks
\begin{align}
\begin{split}\bigg\|\sum_{j\lesssim1}\Delta_{j}(\chi\Lambda_{\lambda}f)\bigg\|_{\gamma,p,\infty} & =\bigg\|\sum_{j\lesssim1}\Delta_{j}\Lambda_{\lambda}(\chi(\lambda^{-1}\cdot)f)\bigg\|_{\gamma,p,\infty}\\
 & \lesssim\sum_{j:2^{j}\lesssim\lambda^{-1}\vee1}\lambda^{-1/p}\big\|\Delta_{j}\big(\chi(\lambda^{-1}\cdot)f\big)\big\|_{L^{p}}\\
 & \lesssim\lambda^{-1/p}|\log\lambda|\|\chi(\lambda^{-1}\cdot)f\|_{0,p,\infty}\lesssim\lambda^{-1/p}|\log\lambda|\|\chi(\lambda^{-1}\cdot)f\|_{L^{p}}.
\end{split}
\label{eq:dialationSmallBlocks}
\end{align}
For any $\gamma'\in[0,\gamma)\cap[0,1/p]$ and $q\ge p$ satisfying
$\frac{1}{p}=\gamma'+\frac{1}{q}$ Hölder's inequality yields (with
convention $1/0=:\infty$)
\[
\|\chi(\lambda^{-1}\cdot)f\|_{L^{p}}\le\|\chi(\lambda^{-1}\cdot)\|_{L^{1/\gamma'}}\|f\|_{L^{q}}\lesssim\lambda^{\gamma'}\|\chi\|_{L^{1/\gamma'}}\|f\|_{\gamma,p,\infty},
\]
which gives the asserted bound for blocks $\Delta_{j}$ with $j$
smaller than a fixed constant. 

Hence, we are left to bound the higher Littlewood-Paley blocks. Using
Bony's decomposition, we get

\begin{align}
\begin{split}\bigg\|\sum_{j\gtrsim1}\Delta_{j}(\chi\Lambda_{\lambda}f)\bigg\|_{\gamma,p,\infty} & \le\bigg\|\sum_{j\gtrsim1}\Delta_{j}T_{\chi}(\Lambda_{\lambda}f)\bigg\|_{\gamma,p,\infty}\\
 & \quad+\bigg\|\sum_{j\gtrsim1}\Delta_{j}T_{\Lambda_{\lambda}f}\chi\bigg\|_{\gamma,p,\infty}+\bigg\|\sum_{j\gtrsim1}\Delta_{j}\pi(\chi,\Lambda_{\lambda}f)\bigg\|_{\gamma,p,\infty}.
\end{split}
\label{eq:dialationDecomp}
\end{align}
We will estimate these three terms separately. By the support properties
of the Littlewood-Paley blocks in the Fourier domain we have $\Delta_{j}T_{\chi}(\Lambda_{\lambda}f)=\Delta_{j}\sum_{k\sim j}S_{k-1}\chi\Delta_{k}(\Lambda_{\lambda}f).$
Therefore,
\[
2^{j\gamma}\|\Delta_{j}T_{\chi}(\Lambda_{\lambda}f)\|_{L^{p}}\lesssim2^{j\gamma}\sum_{k\sim j}\|S_{k-1}\chi\|_{L^{\infty}}\|\Delta_{k}(\Lambda_{\lambda}f)\|_{L^{p}}\lesssim\|\chi\|_{\infty}\big\|\big(2^{k\gamma}\|\Delta_{k}(\Lambda_{\lambda}f)\|_{L^{p}}\big)_{k\ge0}\big\|_{\ell^{\infty}}.
\]
The last norm in the previous display can be estimated as in \citep[Lem. 2.3]{Promel2015},
which yields
\[
\sup_{j\gtrsim1}2^{j\gamma}\|\Delta_{j}T_{\chi}(\Lambda_{\lambda}f)\|_{L^{p}}\lesssim\lambda^{\gamma-1/p}|\log\lambda|\|\chi\|_{\infty}\|f\|_{\gamma,p,\infty}.
\]
For the second term in (\ref{eq:dialationDecomp}) we note with $\gamma'$
and $q$ as above that
\[
2^{j\gamma}\|\Delta_{j}T_{\Lambda_{\lambda}f}\chi\|_{L^{p}}\lesssim2^{j\gamma}\sum_{k\sim j}\|S_{k-1}\Lambda_{\lambda}f\|_{L^{q}}\|\Delta_{k}\chi\|_{L^{1/\gamma'}}\lesssim\|f(\lambda\cdot)\|_{L^{q}}\|\chi\|_{\gamma,1/\gamma',\infty},
\]
where $\|f(\lambda\cdot)\|_{L^{q}}=\lambda^{-1/q}\|f\|_{L^{q}}\lesssim\lambda^{\gamma'-1/p}\|f\|_{\gamma,p,\infty}$.
Finally, the third term in (\ref{eq:dialationDecomp}) is bounded
by 
\begin{align*}
2^{j\gamma}\|\Delta_{j}\pi(\chi,\Lambda_{\lambda}f)\|_{L^{p}} & \lesssim2^{j\gamma}\sum_{k\gtrsim j}\Big\|\sum_{|l|\le1}\Delta_{k-l}\chi\Delta_{k}\Lambda_{\lambda}f\Big\|_{L^{p}}\\
 & \lesssim\sum_{k\gtrsim j}2^{-(k-j)\gamma}\sum_{|l|\le1}\|\Delta_{k-l}\chi\|_{\infty}2^{k\gamma}\|\Delta_{k}\Lambda_{\lambda}f\|_{L^{p}}\\
 & \lesssim\|\chi\|_{\infty}\big\|\big(2^{k\gamma}\|\Delta_{k}(\Lambda_{\lambda}f)\|_{L^{p}}\big)_{k\ge0}\big\|_{\ell^{\infty}}\lesssim\lambda^{\gamma-1/p}|\log\lambda|\|\chi\|_{\infty}\|f\|_{\gamma,p,\infty}.
\end{align*}
For part (i) it remains to note that $\|\chi\|_{L^{1/\gamma'}}\le\|\chi\|_{\gamma,1/\gamma',\infty}$
and $\|\chi\|_{\infty}\lesssim\|\chi\|_{\gamma-\gamma',\infty,\infty}\lesssim\|\chi\|_{\gamma,1/\gamma',\infty}$
due to Besov embeddings.

For (ii) we first note for the small blocks as in (\ref{eq:dialationSmallBlocks})
\begin{align*}
\Big\|\sum_{j\lesssim1}\Delta_{j}\big(\chi_{1}(x)\chi_{2}(x)\Lambda_{\lambda}(xf(x))\big)\Big\|_{\gamma+1,p,\infty} & \lesssim\sum_{j:\lambda2^{j}\lesssim1}\lambda^{-1/p}\|\Delta_{j}\big(\chi_{1}(x/\lambda)\chi_{2}(x/\lambda)xf(x)\big)\big\|_{L^{p}}\\
 & \lesssim\lambda^{-1/p}|\log\lambda|\|\chi_{1}(x/\lambda)\chi_{2}(x/\lambda)xf(x)\|_{L^{p}}\\
 & \lesssim\lambda^{-1/p}|\log\lambda|\|x\chi_{1}(x/\lambda)\chi_{2}(x/\lambda)\|_{L^{1/\gamma'}}\|f\|_{L^{q}}\\
 & \lesssim\lambda^{\gamma'+1-1/p}|\log\lambda|\|x\chi_{1}(x)\|_{L^{1/\gamma'}}\|\chi_{2}\|_{\infty}\|f\|_{\gamma,p,\infty}.
\end{align*}
For the large blocks we obtain as in (i) 
\begin{align*}
 & \Big\|\sum_{j\gtrsim1}\Delta_{j}\big(\chi_{1}(x)\chi_{2}(x)\Lambda_{\lambda}(xf(x))\big)\Big\|_{\gamma+1,p,\infty}\\
 & \quad\lesssim\lambda^{\gamma+1-1/p}|\log\lambda|\|\chi_{1}\|_{\infty}\|\chi_{2}(\lambda^{-1}x)xf(x)\|_{\gamma+1,p,\infty}+\|\chi_{2}(x)\Lambda_{\lambda}(xf(x))\|_{L^{p}}\|\chi_{1}\|_{\gamma+1,\infty,\infty}.
\end{align*}
Since
\begin{align*}
\|\chi_{2}(x)\Lambda_{\lambda}(xf(x))\|_{L^{p}} & =\lambda\|\chi_{2}(x)xf(\lambda x)\|_{L^{p}}\lesssim\lambda\|x\chi_{2}(x)\|_{L^{1/\gamma'}}\|f(\lambda x)\|_{L^{q}}\\
 & \lesssim\lambda^{\gamma'+1-1/p}\|x\chi_{2}(x)\|_{L^{1/\gamma'}}\|f\|_{\gamma,p,\infty},
\end{align*}
we only need a uniform bound for $\|\chi_{2}(\lambda^{-1}x)xf(x)\|_{\gamma+1,p,\infty}$
for which we apply (i) with $\gamma'=p^{-1}-\epsilon<1$:
\begin{align*}
\|\chi_{2}(\lambda^{-1}x)xf(x)\|_{\gamma+1,p,\infty} & =\|xf(x)\Lambda_{\lambda^{-1}}\chi_{2}\|_{\gamma+1,p,\infty}\\
 & \lesssim\lambda^{\epsilon}|\log\lambda|\|\chi_{2}\|_{\gamma+1,p,\infty}\|xf(x)\|_{\gamma+1,1/\gamma',\infty}\\
 & \lesssim\|\chi_{2}\|_{\gamma+1,p,\infty}\|xf(x)\|_{\gamma+1+\epsilon,p,\infty},
\end{align*}
where the last estimate follows from the embedding $\mathcal{B}_{p,\infty}^{1+\gamma+\epsilon}\subset\mathcal{B}_{\gamma',\infty}^{1+\gamma}$.
\end{proof}
Finally, we estimate the Besov norms of the scaled resonant term.
\begin{lem}
\label{lem:scaling resonant term} For $\alpha,\beta\in\R$, $p\ge2$,
$f,g\in\mathcal{S}$ we have uniformly in $\lambda\in(0,1]$ that
\[
\big\|\Lambda_{\lambda}\pi(f,g)-\pi(\Lambda_{\lambda}f,\Lambda_{\lambda}g)\big\|_{\alpha+\beta,p/2,\infty}\lesssim\lambda^{-|\alpha+\beta|-p/2}\|f\|_{\alpha,p,\infty}\|g\|_{\beta,p,\infty}+\|\Lambda_{\lambda}f\|_{\alpha,p,\infty}\|\Lambda_{\lambda}g\|_{\beta,p,\infty}.
\]
\end{lem}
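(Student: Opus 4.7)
The plan is to exploit the pointwise multiplicative identity $\Lambda_{\lambda}(fg)=(\Lambda_{\lambda}f)(\Lambda_{\lambda}g)$, which holds because $\Lambda_{\lambda}$ is the pullback along $x\mapsto\lambda x$. Applying Bony's decomposition (\ref{eq:bony decomposition}) to $fg$ and then composing with $\Lambda_{\lambda}$, and separately to $(\Lambda_{\lambda}f)(\Lambda_{\lambda}g)$, and subtracting the two identities, the resonant mismatch can be written as a sum of commutator-type differences,
\begin{align*}
\Lambda_{\lambda}\pi(f,g)-\pi(\Lambda_{\lambda}f,\Lambda_{\lambda}g) & =\big(T_{\Lambda_{\lambda}f}(\Lambda_{\lambda}g)-\Lambda_{\lambda}T_{f}g\big)\\
 & \quad+\big(T_{\Lambda_{\lambda}g}(\Lambda_{\lambda}f)-\Lambda_{\lambda}T_{g}f\big).
\end{align*}
This reduces the problem to bounding the failure of $\Lambda_{\lambda}$ to commute with the paraproduct, and no resonant object needs to be analysed directly.

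By the triangle inequality each of the two summands splits into two pieces. For the ``post-dilation'' paraproducts $T_{\Lambda_{\lambda}f}(\Lambda_{\lambda}g)$ and $T_{\Lambda_{\lambda}g}(\Lambda_{\lambda}f)$, I would apply Bony's paraproduct estimate (Lemma~\ref{lem:paraproduct}) to obtain
\[
\|T_{\Lambda_{\lambda}f}(\Lambda_{\lambda}g)\|_{\alpha+\beta,p/2,\infty}+\|T_{\Lambda_{\lambda}g}(\Lambda_{\lambda}f)\|_{\alpha+\beta,p/2,\infty}\lesssim\|\Lambda_{\lambda}f\|_{\alpha,p,\infty}\|\Lambda_{\lambda}g\|_{\beta,p,\infty},
\]
which is exactly the second term on the right-hand side of the claim. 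For $\alpha<0$ this is part~(ii) of Lemma~\ref{lem:paraproduct} applied directly; for $\alpha\ge0$ one argues blockwise, controlling $\|S_{j-1}\Lambda_{\lambda}f\|_{L^{p}}$ by a geometric sum that still yields $\|\Lambda_{\lambda}f\|_{\alpha,p,\infty}$ up to a constant, combined with part~(i) of the same lemma.

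For the ``pre-dilation'' paraproducts $\Lambda_{\lambda}T_{f}g$ and $\Lambda_{\lambda}T_{g}f$, I would first use the same paraproduct estimate to bound $T_{f}g,T_{g}f\in\mathcal{B}_{p/2,\infty}^{\alpha+\beta}$ in terms of $\|f\|_{\alpha,p,\infty}\|g\|_{\beta,p,\infty}$, and then invoke the Besov dilation bound from \citep[Lemma~2.3]{Promel2015} with first index $p/2$ and regularity $\alpha+\beta$. Since that lemma produces a factor of the form $\lambda^{((\alpha+\beta)\wedge0)-2/p}|\log\lambda|$ for $\lambda\in(0,1]$, which in either sign case is dominated by $\lambda^{-|\alpha+\beta|-2/p}|\log\lambda|$, this contribution is absorbed in the first term $\lambda^{-|\alpha+\beta|-p/2}\|f\|_{\alpha,p,\infty}\|g\|_{\beta,p,\infty}$ of the stated bound (the logarithm being harmless).

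The main technical point is that $\alpha,\beta$ are allowed to be arbitrary real numbers, so the paraproduct estimates require a case distinction according to the signs of the exponents, with Besov embeddings used in the positive range to fit the situation into the hypotheses of Lemma~\ref{lem:paraproduct}. No convergence issues arise because $f,g\in\mathcal{S}$, so every Besov norm appearing in the argument is a priori finite and the decompositions above are genuine equalities of smooth functions.
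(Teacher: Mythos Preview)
Your decomposition
\[
\Lambda_{\lambda}\pi(f,g)-\pi(\Lambda_{\lambda}f,\Lambda_{\lambda}g)=\big(T_{\Lambda_{\lambda}f}(\Lambda_{\lambda}g)-\Lambda_{\lambda}T_{f}g\big)+\big(T_{\Lambda_{\lambda}g}(\Lambda_{\lambda}f)-\Lambda_{\lambda}T_{g}f\big)
\]
is correct, but the subsequent step---splitting each bracket by the triangle inequality and bounding the four paraproducts individually in $\mathcal{B}_{p/2,\infty}^{\alpha+\beta}$---fails as soon as one of $\alpha,\beta$ is nonnegative. The estimate $\|T_{h}k\|_{\alpha+\beta,p/2,\infty}\lesssim\|h\|_{\alpha,p,\infty}\|k\|_{\beta,p,\infty}$ from Lemma~\ref{lem:paraproduct}(ii) requires $\alpha<0$; when $\alpha\ge0$ your blockwise argument only gives $\|S_{j-1}h\|_{L^{p}}\lesssim\|h\|_{\alpha,p,\infty}$ uniformly in $j$, which via Lemma~\ref{lem:paraproduct}(i) yields $T_{h}k\in\mathcal{B}_{p/2,\infty}^{\beta}$, \emph{not} $\mathcal{B}_{p/2,\infty}^{\alpha+\beta}$. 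Since $\alpha+\beta>\beta$ in this case, there is no embedding in the right direction. The same issue hits $\Lambda_{\lambda}T_{f}g$ before you even apply the dilation bound. In the paper's application (Theorem~\ref{thm:localSol}) the lemma is used with one positive and one negative exponent, so this regime is not avoidable.

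The paper's proof circumvents exactly this problem by \emph{not} bounding the paraproducts separately. Instead it writes $\Lambda_{\lambda}\pi(f,g)$ as a low-frequency piece (blocks $j,k<K$ with $2^{K}\sim\lambda^{-1}$), which produces the first term of the bound, plus a high-frequency piece that is recognised as $\pi'(\Lambda_{\lambda}f,\Lambda_{\lambda}g)$ for a dyadic partition rescaled by a factor $\lambda'\in(1/2,1]$. The remaining task is then to bound $\pi'(F,G)-\pi(F,G)$ in $\mathcal{B}_{p/2,\infty}^{\alpha+\beta}$, and this \emph{does} hold for arbitrary $\alpha,\beta\in\R$: the difference of two resonant terms (equivalently of two paraproducts) built from nearby partitions involves only finitely many near-diagonal block pairs $\Delta_{j}F\,\Delta_{k}G$ with $|j-k|$ bounded, and these contribute $2^{-j(\alpha+\beta)}$ regardless of signs. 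That cancellation is precisely what your triangle-inequality split throws away.
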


\begin{proof}
We proceed by generalising the proofs of \citep[Lem. B.1]{Gubinelli2015}
and of \citep[Theorem~2.1]{Bony1981}. Let us choose $K=K(\lambda)\in\N$
such that $\lambda':=\lambda2^{K}\in(1/2,1]$ and decompose
\begin{align}
\begin{split}\Lambda_{\lambda}\pi(f,g) & =\sum_{j,k<K:|k-j|\le1}\Lambda_{\lambda}\Delta_{i}f\Delta_{k}g\\
 & \qquad+\sum_{j,k\ge K:|k-j|\le1}\big(\F^{-1}[\rho(2^{-j+K}\lambda'^{-1}\cdot)]\ast\Lambda_{\lambda}f\big)\big(\F^{-1}[\rho(2^{-k+K}\lambda'^{-1}\cdot)]\ast\Lambda_{\lambda}g\big).
\end{split}
\label{eq:lambdaRes}
\end{align}
The Fourier transform of the first term is spectrally supported in
a ball with radius of order $2^{K}\sim\lambda^{-1}$ such that
\begin{align*}
\Big\|\sum_{j,k\le K:|k-j|\le1}\Lambda_{\lambda}\Delta_{i}f\Delta_{k}g\Big\|_{\alpha+\beta,p/2,\infty} & \lesssim(2^{K(\alpha+\beta)}\vee1)\sum_{j,k\le K:|k-j|\le1}\big\|\Lambda_{\lambda}\Delta_{j}f\Delta_{k}g\big\|_{L^{p/2}}\\
 & \lesssim(2^{K(\alpha+\beta)}\vee1)\lambda^{-2/p}\sum_{j,k\le K:|k-j|\le1}\|\Delta_{j}f\|_{L^{p}}\|\Delta_{k}g\|_{L^{p}}\\
 & \lesssim(2^{K(\alpha+\beta)}\vee1)\lambda^{-2/p}\sum_{j,k\le K:|k-j|\le1}2^{-j\alpha-k\beta}\|f\|_{\alpha,p,\infty}\|g\|_{\beta,p,\infty}\\
 & \lesssim(\lambda^{-(\alpha+\beta)}\vee1)(\lambda^{(\alpha+\beta)}\vee1)\lambda^{-2/p}\|f\|_{\alpha,p,\infty}\|g\|_{\beta,p,\infty}.
\end{align*}
The second term in (\ref{eq:lambdaRes}) equals $\pi'(\Lambda_{\lambda}f,\Lambda_{\lambda}g)$
where $\pi'$ is the resonant term corresponding to the modified partition
of unity $(\chi(\cdot/\lambda'),\rho(\cdot/\lambda'))$. Note that
the scaling parameter $\lambda'\in(1/2,1]$ is uniformly bounded from
above and below. It remains to show
\[
\big\|\pi'(f,g)-\pi(f,g)\big\|_{\alpha+\beta,p/2,\infty}\lesssim\|f\|_{\alpha,p,\infty}\|g\|_{\beta,p,\infty}.
\]
Owing to $fg=T'_{g}f+T'_{f}g+\pi'(f,g)$ for the paraproduct operators
$T'_{g}f$ associated to $(\chi(\cdot/\lambda'),\rho(\cdot/\lambda'))$,
we have
\[
\big\|\pi'(f,g)-\pi(f,g)\big\|_{\alpha+\beta,p/2,\infty}\le\big\| T'_{f}g-T_{f}g\big\|_{\alpha+\beta,p/2,\infty}+\big\| T'_{g}f-T_{g}f\big\|_{\alpha+\beta,p/2,\infty}.
\]
Generalising \citep[Thm. 2.1]{Bony1981}, we will now prove
\begin{equation}
\big\| T_{g}f-T_{g}^{*}f\big\|_{\alpha+\beta,p/2,\infty}\lesssim\|f\|_{\alpha,p,\infty}\|g\|_{\beta,p,\infty}\label{eq:BonyThm21}
\end{equation}
for the operator
\[
T_{g}^{*}f:=\F^{-1}\Big[\int_{\R}\chi(u-v,v)\F g(u-v)\F f(v)\d v\Big]
\]
where $\chi\colon\R^{2}\setminus\{0\}\to[0,1]$ is a $C^{\infty}$-function
such that for sufficiently small constants $0<\epsilon_{1}<\epsilon_{2}$:
\[
\chi(u,v)=\begin{cases}
1, & |u|\le\epsilon_{1}|v|\\
0, & |u|\ge\epsilon_{2}|v|
\end{cases}.
\]
The estimate (\ref{eq:BonyThm21}) especially implies that $T_{g}f$
and thus $\pi(f,g)$ does not depend on the choice of the partition
of unity up to a regular remainder, which concludes the proof. 

To verify (\ref{eq:BonyThm21}), we decompose
\[
\F\big[T_{g}^{*}f]=\sum_{j,k}\int_{\R}\chi(u-v,v)\F[\Delta_{k}g](u-v)\F[\Delta_{j}f](v)\d v.
\]
Due to the support assumption on $\chi$, the terms with $2^{k}\gtrsim2^{j-1}$
are zero and the integrands with $2^{k}\lesssim2^{j-1}$ coincide
with $\F[\Delta_{k}g]\ast\F[\Delta_{j}f](u)$. Therefore, for integers
$N_{1}<N_{2}$ depending only on $\epsilon_{1}$, $\epsilon_{1}$
and $(\chi,\rho)$, respectively, we have
\[
T_{g}^{*}f=\sum_{j}\sum_{k<j-N_{1}}\text{\ensuremath{\Delta_{k}g}\ensuremath{\Delta_{j}f}}+R(g,f)
\]
with
\[
R(g,f)=\sum_{j}R_{j}(g,f),\quad R_{j}(g,f):=\sum_{k=j-N_{1}}^{j-N_{2}}\F^{-1}\Big[\int_{\R}\chi(u-v,v)\F[\Delta_{k}g](u-v)\F[\Delta_{j}f](v)\d v\Big].
\]
Fubini's theorem yields 
\begin{align*}
R_{j}(g,f)(x) & =\sum_{k=j-N_{1}}^{j-N_{2}}\frac{1}{2\pi}\int_{\R}\int_{\R}e^{ix(u+v)}\chi(u,v)\F[\Delta_{k}g](u)\F[\Delta_{j}f](v)\d v\d u\\
 & =\sum_{k=j-N_{1}}^{j-N_{2}}\int_{\R}\int_{\R}\F^{-1}[\chi](s,t)\Delta_{k}g(x-s)\Delta_{j}f(x-t)\d s\d t.
\end{align*}
Since $\F^{-1}\chi\in L^{1}(\R^{2})$ due to the regularity of $\chi$,
Young's inequality implies
\[
\|R_{j}(g,f)\|_{L^{p/2}}\lesssim\sum_{k=j-N_{1}}^{j-N_{2}}\|\F^{-1}\chi\|_{L^{1}}\|\Delta_{k}g\|_{L^{p}}\|\Delta_{j}f\|_{L^{p}}.
\]
Noting that $R_{j}(g,f)$ is spectrally supported in an annulus with
radius of order $2^{j}C$ for some $C>0$, we obtain 
\begin{align*}
\|R(g,f)\|_{\alpha+\beta,p/2,\infty} & \lesssim\sup_{m}2^{m(\alpha+\beta)}\sum_{2^{m}\sim2^{j}C}\sum_{k\sim j}\|\Delta_{k}g\|_{L^{p}}\|\Delta_{j}f\|_{L^{p}}\lesssim\|f\|_{\alpha,p,\infty}\|g\|_{\beta,p,\infty}
\end{align*}
and thus (\ref{eq:BonyThm21}).
\end{proof}
\begin{singlespace}
{\footnotesize{}\bibliographystyle{chicago}
\bibliography{quellen}

\def\cprime{$'$}
\begin{thebibliography}{}

\bibitem[\protect\citeauthoryear{Abi~Jaber, Larsson, and Pulido}{Abi~Jaber
  et~al.}{2017}]{AbiJaber2017}
Abi~Jaber, E., M.~Larsson, and S.~Pulido (2017).
\newblock {Affine Volterra processes}.
\newblock {\em Preprint arXiv:1708.08796\/}.

\bibitem[\protect\citeauthoryear{Bahouri, Chemin, and Danchin}{Bahouri
  et~al.}{2011}]{Bahouri2011}
Bahouri, H., J.-Y. Chemin, and R.~Danchin (2011).
\newblock {\em {Fourier analysis and nonlinear partial differential
  equations}}.
\newblock Springer.

\bibitem[\protect\citeauthoryear{Bai and Ma}{Bai and Ma}{2015}]{Bai2015}
Bai, L. and J.~Ma (2015).
\newblock Stochastic differential equations driven by fractional {B}rownian
  motion and {P}oisson point process.
\newblock {\em Bernoulli\/}~{\em 21\/}(1), 303--334.

\bibitem[\protect\citeauthoryear{Bailleul and Bernicot}{Bailleul and
  Bernicot}{2016}]{Bailleul2016}
Bailleul, I. and F.~Bernicot (2016).
\newblock Higher order paracontrolled calculus.
\newblock {\em Preprint arXiv:1609.06966\/}.

\bibitem[\protect\citeauthoryear{Barndorff-Nielsen, Benth, and
  Veraart}{Barndorff-Nielsen et~al.}{2013}]{Barndorff-Nielsen2013}
Barndorff-Nielsen, O.~E., F.~E. Benth, and A.~E.~D. Veraart (2013).
\newblock Modelling energy spot prices by volatility modulated {L}\'evy-driven
  {V}olterra processes.
\newblock {\em Bernoulli\/}~{\em 19\/}(3), 803--845.

\bibitem[\protect\citeauthoryear{Barndorff-Nielsen, Benth, and
  Veraart}{Barndorff-Nielsen et~al.}{2015}]{Barndorff-Nielsen2015}
Barndorff-Nielsen, O.~E., F.~E. Benth, and A.~E.~D. Veraart (2015).
\newblock Recent advances in ambit stochastics with a view towards
  tempo-spatial stochastic volatility/intermittency.
\newblock In {\em Advances in mathematics of finance}, Volume 104 of {\em
  Banach Center Publ.}, pp.\  25--60. Polish Acad. Sci. Inst. Math., Warsaw.

\bibitem[\protect\citeauthoryear{Bayer, Friz, Gassiat, Martin, and
  Stemper}{Bayer et~al.}{2017}]{Bayer2017}
Bayer, C., P.~K. Friz, P.~Gassiat, J.~Martin, and B.~Stemper (2017).
\newblock {A Regularity Structure for Rough Volatility}.
\newblock {\em Preprint arXiv:1710.07481\/}.

\bibitem[\protect\citeauthoryear{Berger and Mizel}{Berger and
  Mizel}{1980a}]{Berger1980a}
Berger, M.~A. and V.~J. Mizel (1980a).
\newblock Volterra equations with {I}t\^{o} integrals. {I}.
\newblock {\em J. Integral Equations\/}~{\em 2\/}(3), 187--245.

\bibitem[\protect\citeauthoryear{Berger and Mizel}{Berger and
  Mizel}{1980b}]{Berger1980b}
Berger, M.~A. and V.~J. Mizel (1980b).
\newblock Volterra equations with {I}t\^{o} integrals. {II}.
\newblock {\em J. Integral Equations\/}~{\em 2\/}(4), 319--337.

\bibitem[\protect\citeauthoryear{Bony}{Bony}{1981}]{Bony1981}
Bony, J.-M. (1981).
\newblock {Calcul symbolique et propagation des singularites pour les
  \'{e}quations aux d\'{e}riv\'{e}es partielles non lin\'{e}aires}.
\newblock {\em Ann. Sci. \'{E}c. Norm. Sup\'{e}r. (4)\/}~{\em 14}, 209--246.

\bibitem[\protect\citeauthoryear{B\"ottcher, Schilling, and Wang}{B\"ottcher
  et~al.}{2013}]{Bottcher2013}
B\"ottcher, B., R.~Schilling, and J.~Wang (2013).
\newblock {\em L\'evy matters. {III}}, Volume 2099 of {\em Lecture Notes in
  Mathematics}.
\newblock Springer, Cham.
\newblock L\'evy-type processes: construction, approximation and sample path
  properties, With a short biography of Paul L\'evy by Jean Jacod, L\'evy
  Matters.

\bibitem[\protect\citeauthoryear{Chevyrev and Friz}{Chevyrev and
  Friz}{2019}]{Chevyrev2017}
Chevyrev, I. and P.~K. Friz (2019).
\newblock Canonical rdes and general semimartingales as rough paths.
\newblock {\em Ann. Probab.\/}~{\em 47\/}(1), 420--463.

\bibitem[\protect\citeauthoryear{Cochran, Lee, and Potthoff}{Cochran
  et~al.}{1995}]{Cochran1995}
Cochran, W.~G., J.-S. Lee, and J.~Potthoff (1995).
\newblock Stochastic {V}olterra equations with singular kernels.
\newblock {\em Stochastic Process. Appl.\/}~{\em 56\/}(2), 337--349.

\bibitem[\protect\citeauthoryear{Coutin and Decreusefond}{Coutin and
  Decreusefond}{2001}]{Coutin2001}
Coutin, L. and L.~Decreusefond (2001).
\newblock Stochastic {V}olterra equations with singular kernels.
\newblock In {\em Stochastic analysis and mathematical physics}, Volume~50 of
  {\em Progr. Probab.}, pp.\  39--50. Birkh\"auser Boston, Boston, MA.

\bibitem[\protect\citeauthoryear{Deya and Tindel}{Deya and
  Tindel}{2009}]{Deya2009}
Deya, A. and S.~Tindel (2009).
\newblock {Rough {V}olterra equations. {I}. {T}he algebraic integration
  setting}.
\newblock {\em Stoch. Dyn.\/}~{\em 9\/}(3), 437--477.

\bibitem[\protect\citeauthoryear{Deya and Tindel}{Deya and
  Tindel}{2011}]{Deya2011}
Deya, A. and S.~Tindel (2011).
\newblock {Rough {V}olterra equations 2: {C}onvolutional generalized
  integrals}.
\newblock {\em Stochastic Process. Appl.\/}~{\em 121\/}(8), 1864--1899.

\bibitem[\protect\citeauthoryear{Dzhaparidze and van Zanten}{Dzhaparidze and
  van Zanten}{2004}]{Dzhaparidze2004}
Dzhaparidze, K. and H.~van Zanten (2004).
\newblock A series expansion of fractional {B}rownian motion.
\newblock {\em Probab. Theory Related Fields\/}~{\em 130\/}(1), 39--55.

\bibitem[\protect\citeauthoryear{El~Euch and Rosenbaum}{El~Euch and
  Rosenbaum}{2019}]{ElEuch2016}
El~Euch, O. and M.~Rosenbaum (2019).
\newblock The characteristic function of rough heston models.
\newblock {\em Math. Finance\/}~{\em 29\/}(1), 3--038.

\bibitem[\protect\citeauthoryear{Ferrante and Rovira}{Ferrante and
  Rovira}{2006}]{Ferrante2006}
Ferrante, M. and C.~Rovira (2006).
\newblock {Stochastic delay differential equations driven by fractional
  {B}rownian motion with {H}urst parameter {$H>{1\over2}$}}.
\newblock {\em Bernoulli\/}~{\em 12\/}(1), 85--100.

\bibitem[\protect\citeauthoryear{Friz and Victoir}{Friz and
  Victoir}{2010}]{Friz2010}
Friz, P. and N.~Victoir (2010).
\newblock {\em {Multidimensional stochastic processes as rough paths. Theory
  and applications}}.
\newblock Cambridge University Press.

\bibitem[\protect\citeauthoryear{Friz, Gess, Gulisashvili, and Riedel}{Friz
  et~al.}{2016}]{Friz2016}
Friz, P.~K., B.~Gess, A.~Gulisashvili, and S.~Riedel (2016).
\newblock The {J}ain-{M}onrad criterion for rough paths and applications to
  random {F}ourier series and non-{M}arkovian {H}\"ormander theory.
\newblock {\em Ann. Probab.\/}~{\em 44\/}(1), 684--738.

\bibitem[\protect\citeauthoryear{Friz and Hairer}{Friz and
  Hairer}{2014}]{Friz2014}
Friz, P.~K. and M.~Hairer (2014).
\newblock {\em {A course on rough paths}}.
\newblock {Universitext}. Springer, Cham.
\newblock With an introduction to regularity structures.

\bibitem[\protect\citeauthoryear{Friz and Shekhar}{Friz and
  Shekhar}{2017}]{Friz2012}
Friz, P.~K. and A.~Shekhar (2017).
\newblock General rough integration, {L}\'evy rough paths and a
  {L}\'evy-{K}intchine-type formula.
\newblock {\em Ann. Probab.\/}~{\em 45\/}(4), 2707--2765.

\bibitem[\protect\citeauthoryear{Gubinelli}{Gubinelli}{2004}]{Gubinelli2004}
Gubinelli, M. (2004).
\newblock {Controlling rough paths}.
\newblock {\em J. Funct. Anal.\/}~{\em 216\/}(1), 86--140.

\bibitem[\protect\citeauthoryear{Gubinelli, Imkeller, and Perkowski}{Gubinelli
  et~al.}{2015}]{Gubinelli2015}
Gubinelli, M., P.~Imkeller, and N.~Perkowski (2015).
\newblock {Paracontrolled distributions and singular {PDE}s}.
\newblock {\em Forum Math. Pi\/}~{\em 3}, e6, 75.

\bibitem[\protect\citeauthoryear{Gubinelli, Imkeller, and Perkowski}{Gubinelli
  et~al.}{2016}]{Gubinelli2016}
Gubinelli, M., P.~Imkeller, and N.~Perkowski (2016).
\newblock A {F}ourier analytic approach to pathwise stochastic integration.
\newblock {\em Electron. J. Probab.\/}~{\em 21}, Paper No. 2, 37.

\bibitem[\protect\citeauthoryear{Gubinelli and Tindel}{Gubinelli and
  Tindel}{2010}]{Gubinelli2010}
Gubinelli, M. and S.~Tindel (2010).
\newblock {Rough evolution equations}.
\newblock {\em Ann. Probab.\/}~{\em 38\/}(1), 1--75.

\bibitem[\protect\citeauthoryear{Hairer}{Hairer}{2014}]{Hairer2014}
Hairer, M. (2014).
\newblock {A theory of regularity structures}.
\newblock {\em Invent. Math.\/}, 1--236.

\bibitem[\protect\citeauthoryear{Hairer}{Hairer}{2015}]{Hairer2015}
Hairer, M. (2015).
\newblock {Introduction to regularity structures}.
\newblock {\em Braz. J. Probab. Stat.\/}~{\em 29\/}(2), 175--210.

\bibitem[\protect\citeauthoryear{Jaisson and Rosenbaum}{Jaisson and
  Rosenbaum}{2016}]{Thibault2016}
Jaisson, T. and M.~Rosenbaum (2016).
\newblock Rough fractional diffusions as scaling limits of nearly unstable
  heavy tailed {H}awkes processes.
\newblock {\em Ann. Appl. Probab.\/}~{\em 26\/}(5), 2860--2882.

\bibitem[\protect\citeauthoryear{Lejay and Victoir}{Lejay and
  Victoir}{2006}]{Lejay2006}
Lejay, A. and N.~Victoir (2006).
\newblock {On {$(p,q)$}-rough paths}.
\newblock {\em J. Differential Equations\/}~{\em 225\/}(1), 103--133.

\bibitem[\protect\citeauthoryear{Lototsky and Rozovsky}{Lototsky and
  Rozovsky}{2018}]{Lototsky2018}
Lototsky, S.~V. and B.~L. Rozovsky (2018).
\newblock {Classical and Generalized Solutions of Fractional Stochastic
  Differential Equations}.
\newblock {\em Preprint arXiv:1810.12951\/}.

\bibitem[\protect\citeauthoryear{Lyons}{Lyons}{1998}]{Lyons1998}
Lyons, T.~J. (1998).
\newblock {Differential equations driven by rough signals}.
\newblock {\em Rev. Mat. Iberoam.\/}~{\em 14\/}(2), 215--310.

\bibitem[\protect\citeauthoryear{Lyons, Caruana, and L\'{e}vy}{Lyons
  et~al.}{2007}]{Lyons2007}
Lyons, T.~J., M.~Caruana, and T.~L\'{e}vy (2007).
\newblock {\em {Differential equations driven by rough paths}}, Volume 1908 of
  {\em {Lecture Notes in Mathematics}}.
\newblock Berlin: Springer.

\bibitem[\protect\citeauthoryear{Martin and Perkowski}{Martin and
  Perkowski}{2018}]{Martin2018}
Martin, J. and N.~Perkowski (2018).
\newblock {A Littlewood-Paley description of modelled distributions}.
\newblock {\em Preprint arXiv:1808.00500\/}.

\bibitem[\protect\citeauthoryear{Mytnik and Salisbury}{Mytnik and
  Salisbury}{2015}]{Mytnik2015}
Mytnik, L. and T.~S. Salisbury (2015).
\newblock {Uniqueness for Volterra-type stochastic integral equations}.
\newblock {\em Preprint arXiv:1502.05513\/}.

\bibitem[\protect\citeauthoryear{Neuenkirch, Nourdin, and Tindel}{Neuenkirch
  et~al.}{2008}]{Neuenkirch2008}
Neuenkirch, A., I.~Nourdin, and S.~Tindel (2008).
\newblock {Delay equations driven by rough paths}.
\newblock {\em Electron. J. Probab.\/}~{\em 13}, no. 67, 2031--2068.

\bibitem[\protect\citeauthoryear{Pardoux and Protter}{Pardoux and
  Protter}{1990}]{PardouxProtter1990}
Pardoux, E. and P.~Protter (1990).
\newblock Stochastic {V}olterra equations with anticipating coefficients.
\newblock {\em Ann. Probab.\/}~{\em 18\/}(4), 1635--1655.

\bibitem[\protect\citeauthoryear{Pr\"{o}mel and Trabs}{Pr\"{o}mel and
  Trabs}{2016}]{Promel2015}
Pr\"{o}mel, D.~J. and M.~Trabs (2016).
\newblock {Rough differential equations driven by signals in {B}esov spaces}.
\newblock {\em J. Differential Equations\/}~{\em 260\/}(6), 5202--5249.

\bibitem[\protect\citeauthoryear{Protter}{Protter}{1985}]{Protter1985}
Protter, P. (1985).
\newblock Volterra equations driven by semimartingales.
\newblock {\em Ann. Probab.\/}~{\em 13\/}(2), 519--530.

\bibitem[\protect\citeauthoryear{Rolski, Schmidli, Schmidt, and Teugels}{Rolski
  et~al.}{1999}]{Rolski1999}
Rolski, T., H.~Schmidli, V.~Schmidt, and J.~Teugels (1999).
\newblock {\em Stochastic processes for insurance and finance}.
\newblock Wiley Series in Probability and Statistics. John Wiley \& Sons, Ltd.,
  Chichester.

\bibitem[\protect\citeauthoryear{Rosenbaum}{Rosenbaum}{2009}]{Rosenbaum2009}
Rosenbaum, M. (2009).
\newblock {First order {$p$}-variations and {B}esov spaces}.
\newblock {\em Statist. Probab. Lett.\/}~{\em 79\/}(1), 55--62.

\bibitem[\protect\citeauthoryear{Sawano}{Sawano}{2018}]{Sawano2018}
Sawano, Y. (2018).
\newblock {\em Theory of Besov Spaces}, Volume~56 of {\em Developments in
  Mathematics}.
\newblock Springer, Singapore.

\bibitem[\protect\citeauthoryear{Triebel}{Triebel}{2010}]{triebel2010}
Triebel, H. (2010).
\newblock {\em {Theory of Function Spaces}}.
\newblock {Birkh\"{a}user Verlag, Basel}.
\newblock Reprint of the 1983 Edition.

\bibitem[\protect\citeauthoryear{Wang}{Wang}{2008}]{Wang2008}
Wang, Z. (2008).
\newblock Existence and uniqueness of solutions to stochastic {V}olterra
  equations with singular kernels and non-{L}ipschitz coefficients.
\newblock {\em Statist. Probab. Lett.\/}~{\em 78\/}(9), 1062--1071.

\bibitem[\protect\citeauthoryear{Zhang}{Zhang}{2010}]{Zhang2010}
Zhang, X. (2010).
\newblock {Stochastic Volterra equations in Banach spaces and stochastic
  partial differential equation}.
\newblock {\em J. Funct. Anal.\/}~{\em 258\/}(4), 1361--1425.

\end{thebibliography}
}{\footnotesize\par}
\end{singlespace}

\end{document}